\title{Stationary distributions of continuous-time Markov chains: a review of theory and truncation-based approximations 
\thanks{The first author was supported by a BBSRC PhD Studentship (BB/F017510/1). GBS acknowledges support by the EPSRC Fellowship for Growth EP/M002187/1, a Royal Academy of Engineering Chair in Emerging Technologies, and the EU H2020 FET-OPEN RIA grant 766840 (COSY-BIO). MB acknowledges support from EPSRC grant EP/N014529/1 funding the EPSRC Centre for Mathematics of Precision Healthcare.}}
\author{
  Juan Kuntz\thanks{Department of Mathematics and Department of Bioengineering, Imperial College London, London SW7 2AZ, UK.  \, \textit{Current address:} Department of Statistics,  University of Warwick, Coventry, CV4 7AL,  UK
    (\email{juan.kuntz-nussio@warwick.ac.uk}).}
  \and
  Philipp Thomas\thanks{Department of Mathematics, Imperial College London, London SW7 2AZ, UK (\email{p.thomas@imperial.ac.uk}).}
  \and
  Guy-Bart Stan\thanks{Co-corresponding author. Department of Bioengineering, Imperial College London, London SW7 2AZ, UK
    (\email{g.stan@imperial.ac.uk}).}
  \and
  Mauricio Barahona\thanks{Co-corresponding author. Department of Mathematics, Imperial College London, London SW7 2AZ, UK
    (\email{m.barahona@imperial.ac.uk}).}
}
\def\juansec#1{\subsubsection{\textbf{#1}}}
\newcommand{\cmark}{\ding{51}}%
\newcommand{\xmark}{\ding{55}}%
\newcolumntype{L}{>{\raggedright\arraybackslash}X}
\newtheorem{assumption}[theorem]{Assumption}
\newtheorem{example}[theorem]{Example}
\newtheorem{remark}[theorem]{Remark}
\numberwithin{equation}{section}
\renewcommand{\cal}[1]{\mathcal{#1}}
\renewcommand{\r}{\mathbb{R}}
\newcommand{\n}{\mathbb{N}}
\newcommand{\nn}{\mathbb{N}^n}
\newcommand{\zp}{\mathbb{Z}_+}
\newcommand{\mmag}[1]{\left|#1\right|}
\newcommand{\s}{\mathcal{S}}
\renewcommand{\L}{\mathcal{L}}
\newcommand{\iprod}[2]{\left\langle{#1},{#2}\right\rangle}
\newcommand{\norm}[1]{\left|\left|{#1}\right|\right|}
\newcommand{\wnorm}[1]{\left|\left|{#1}\right|\right|_w}
\newcommand{\Pb}{\mathbb{P}}
\newcommand{\Eb}{\mathbb{E}}
\newcommand{\Pbx}[1]{\mathbb{P}_x\left(#1\right)}
\newcommand{\Ebx}[1]{\mathbb{E}_x\left[#1\right]}
\newcommand{\Pbp}[1]{\mathbb{P}_\pi\left(#1\right)}
\newcommand{\Pbl}[1]{\mathbb{P}_\lambda\left(#1\right)}
\begin{document}

\maketitle
\begin{abstract} 
Computing the stationary distributions of a continuous-time Markov chain (CTMC) involves solving a set of linear equations. In most cases of interest, the number of equations is infinite or too large, and the equations cannot be solved analytically or numerically. Several approximation schemes overcome this issue by truncating the state space to a manageable size. In this review, we first give a comprehensive theoretical account of the stationary distributions and their relation to the long-term behaviour of CTMCs that is readily accessible to non-experts and free of irreducibility assumptions made in standard texts. We then review truncation-based approximation schemes for CTMCs with infinite state spaces paying particular attention to the schemes' convergence and the errors they introduce, and we illustrate their performance with an example of a stochastic reaction network of relevance in biology and chemistry. We conclude by discussing computational trade-offs associated with error control and several open questions.
\end{abstract}

\begin{keywords} stochastic reaction networks, chemical master equation, reducible Markov chains, ergodic distributions, error bounds, optimal approximations, boundedness in probability, Foster-Lyapunov criteria, censored chain, level-dependent quasi-birth-death processes, finite state projection algorithm, truncation-and-augmentation scheme, linear programming.\end{keywords}

\begin{AMS}
60J27, 60J22, 65C40,  90C05, 90C90 
\end{AMS}


\section{Introduction}\label{sec:intro}
Continuous-time Markov chains (or \emph{continuous-time chains} for short) are  pervasively used throughout science and engineering to model stochastic phenomena evolving in time over a discrete space.
When applied to describe the time evolution of populations of interacting species with indistinguishable members, continuous-time chains are often referred to as \emph{stochastic reaction networks (SRNs)}. SRNs date back to the early days of Markov chain theory (e.g.\ \cite{Feller1939,Delbruck1940,Kendall1951,mcquarrie1967}), but their applications have rapidly proliferated over the last two decades. In biology, SRNs have been popularised through the Kendall-Gillespie algorithm~\cite{Kendall1950,Gillespie1976} (a.k.a.\ the direct method~\cite{Gillespie1977} or the stochastic simulation algorithm~\cite{gillespie2007}) and its use in modelling chemical reactions \cite{Delbruck1940,mcquarrie1967,higham2008}, gene expression variability across cell populations~\cite{mcadams1997,shahrezaei2008,Thomas2014}, neural network dynamics~\cite{bressloff2009}, predator-prey interactions~\cite{McKane2005}, and the spread of epidemics~\cite{Youssef2011}, among many others. 
Elsewhere, SRNs have also been used to model social dynamics \cite{helbing2010,haag2017} and in a range of engineering, business, and financial applications~\cite{Castro1996,Caliskan2007,Aksin2009,Cont2010}.

While the probability distribution of the chain's state generally changes with time, it often becomes independent of the chain's (typically unknown) initial conditions in the long-term. Such limiting distributions are known as \emph{stationary distributions}, and are commonly used as summary statistics for these models.
%
%
%
%

The theory regarding stationary distributions and the long-term behaviour of continuous-time chains 
is classical. Yet standard texts (e.g.\ \cite{Norris1997,Asmussen2003,Bremaud1999,Anderson1991}) on this subject assume irreducibility of the chain, an assumption which guarantees a unique stationary distribution. This condition is often  difficult to verify in practice or not met in applications  \cite{Pauleve2014,Kuntz2017,Gupta2018}.
To the best of our knowledge, comprehensive accounts of the theory of stationary distributions of continuous-time chains that omit irreducibility
can only be found in technically advanced papers written for general Markov processes, e.g.\ the series of articles by S.~P.~Meyn and R.~L.~Tweedie~\cite{Meyn1992,Meyn1993a,Meyn1993b}. The first aim of this review is to make this material accessible to non-experts. 
This is covered in Section~\ref{sec:theory}, where we first introduce SRNs, the chemical master equation, 
and the stationary distributions.  
Next, we delineate the conditions under which the stationary distributions determine the long-term behaviour of the chain, and  explain why there may be more than one such distribution. We proceed by giving a simple characterisation of the set of stationary distributions in terms of the closed communicating classes and their associated ergodic distributions. We end the section by
reviewing the Foster-Lyapunov criteria used in practice to establish the existence of these distributions, and by 
discussing simulation methods used to approximate them.

%
The stationary distributions of the chain are the solutions of a set of linear equations with as many unknowns and equations as there are states in the state space.
In most cases of interest,  the state space is infinite or too large and these equations cannot be solved analytically or numerically (see \cite{Serfozo1999,Baskett1975,Jackson1957,Chao1999,haken1974,van1976equilibrium,Staff1970,Kelly1979,Jahnke2007,shahrezaei2008,Anderson2010a,Grima2012,Mather2010,Kumar2014,Dattani2017,Anderson2019,Melykuti2014} for notable exceptions). 
In practice, we overcome this issue through numerical procedures~\cite{Asmussen2007,Goutsias2013,andreychenko2017,schnoerr2017} that yield approximations of the stationary distribution. Among these are methods that approximate the distribution only within a given finite subset (a \emph{truncation}) of the state space, neglecting the rest. Such \emph{truncation-based schemes} date back to the late 60s~\cite{Seneta1967,Seneta1968} and several ways to solve the truncated problem have been proposed over the last decade~\cite{Baumann2010,Phung-Duc2010,Dayar2011a,Hart2012,Liu2015,Masuyama2016,Masuyama2017,Masuyama2017a,Kazeev2015,Cao2016,Cao2016b,Gupta2017,Liu2018,Liu2018a,Dayar2011,Spieler2014,Kuntz2017,Kuntz2018a,Kuntzthe,Baumann2013a,Dayar2012}. How to compute and control the approximation errors introduced by these schemes is a matter of ongoing research.

The main aim of this work is to provide an overview of truncation-based schemes applicable to SRNs, explaining their relationships, and comparing different aspects of their performance. We pay particular attention to the \emph{convergence} properties of the schemes (i.e.\  their ability to produce arbitrarily accurate approximations given sufficient computation power); to the errors they introduce; and to \emph{computable} errors or error bounds that may be used in practice to verify the accuracy of the approximations. 
In Section~\ref{sec:truncs}, we introduce the general properties of truncation-based schemes: the notions of convergence, 
the different errors, and  
the optimal approximating distribution (i.e.\  the stationary distribution conditioned on the chain being inside the truncation), which we refer to as the \emph{conditional distribution}. 

The truncation-based schemes are reviewed in Section~\ref{fiveschemes}. 
We start by describing truncation-based approximations for the simple case of birth-death processes, 
 whose conditional distribution can be computed exactly. We proceed by reviewing several truncation-based schemes that recover the conditional distribution in the birth-death case and yield approximations in the general case. 
In Section~\ref{togglesec}, we compare the numerical performance of the schemes on a genetic toggle switch, a well-known model for which no analytical solution exists. We finish the review in Section~\ref{sec:conclusion} by discussing the advantages and limitations of the different schemes, as well as pointing to several theoretical and practical open directions in the field.

For completeness, we have included in this review a series of proofs relevant to truncation-based schemes that we have been unable to locate elsewhere in the literature. For ease of reading, these have been relegated to the appendices.

\section{Stochastic reaction networks, continuous-time chains, and their stationary distributions}\label{sec:theory}

A stochastic reaction network (SRN) involving $n$ species $S_1,S_2,...,S_n$ and $m$ reactions
\begin{align}
\label{eq:network}
\nu_{1j}^-S_1+\dots+\nu_{nj}^-S_n \xrightarrow{a_j} \nu_{1j}^+S_1+\dots+\nu_{nj}^+S_n, 
 \qquad j=1,\ldots,m,  
\end{align}
is often modelled with a minimal time-homogeneous continuous-time Markov chain $X:=(X_t)_{t\geq0}$,  where $X_t:=(X^1_t,\dots,X^n_t)$ compiles the number of individuals (or \emph{molecules}) of each of the species $S_1,\dots,S_n$ at time $t$. The chain takes values in a (possibly infinite) subset $\s$ of $\n^n$, where $\n$ denotes the set of non-negative integers, known as the \emph{state space} and has \emph{rate matrix} $Q:=(q(x,y))_{x,y\in\s}$ defined by
\begin{equation}
\label{eq:qmatrixsrn}
q(x,y):=\sum_{j=1}^ma_j(x)(1_{x+\nu_j}(y)-1_x(y))\quad\forall x,y\in\s.
\end{equation}
In the above, $1_x$ denotes the indicator function of state $x$ ($1_x(y)$ is one if $x=y$ and zero otherwise), $\nu_{ij}^{\pm}\in\n$ the stoichiometric coefficients, $a_j:\nn\to[0,\infty)$ the propensity of the $j^{th}$ reaction, and $\nu_j:=(\nu_{1j}^+-\nu_{1j}^-,\dots,\nu_{nj}^+-\nu_{nj}^-)$ the stoichiometric vector gathering the net changes in species numbers produced by the reaction. It is straightforward to check that the rate matrix $Q$ is \emph{totally stable} and \emph{conservative}:
\begin{equation}\label{eq:qmatrix}q(x,y)\geq0\quad\forall x\neq y,\qquad
q(x):=-q(x,x)=\sum_{y\neq x}q(x,y)<\infty\quad\forall x\in\cal{S},\end{equation}
where the sum is taken over all states $y$ in $\s$ that are not $x$. While our work here is motivated by chains with rate matrices of the form in~\eqref{eq:qmatrixsrn},  we find it convenient throughout this review to allow $Q$ to be any matrix satisfying~\eqref{eq:qmatrix}.

In practice, the chain $X$ is often generated by running the Kendall-Gillespie algorithm~\cite{Feller1940,Kendall1950,Gillespie1976}: sample  a state $x$ from an \emph{initial distribution} $\lambda:=(\lambda(x))_{x\in\s}$ and start the chain at $x$ (i.e.\  set $X_0:=x$). If $q(x)$ is zero, leave the chain at $x$ for all time. Otherwise, wait an exponentially distributed amount of time with mean $1/q(x)$, sample $y$ from the probability distribution $p(x,\cdot)$ given by
\begin{equation}\label{eq:jumpmat}
p(x,y):=
\left\{\begin{array}{cl} \left(1-1_x(y)\right)\dfrac{q(x,y)}{q(x)} &\text{if }q(x)>0 \\
1_x(y) &\text{otherwise}\end{array}\right.
\qquad \forall x,y\in\s,\end{equation}
and update the chain's state to $y$ (we say that the chain \emph{jumps} from $x$ to $y$ and we call the time at which it jumps the \emph{jump time}). Repeat these steps starting from $y$ instead of $x$. All random variables sampled must be independent of each other. Observing the chain at the jump times, we obtain a discrete-time chain with one-step matrix \eqref{eq:jumpmat} known as the \emph{embedded discrete-time chain} or \emph{jump chain}~\cite{Norris1997}.

Technically, we define our chain on a measurable space and construct a family of probability measures
\[
\left \{\Pb_\lambda:\lambda(x)\geq0\enskip\forall x\in\s,\enskip\sum_{x\in\s}\lambda(x)=1 \right \}
\]
on this space such that $\Pb_\lambda$ carries all statistical information regarding the chain if its starting location is sampled from $\lambda$, see \cite[Section~26]{Kuntz2020} for details. 
We will write $\Pb_x$ instead of $\Pb_\lambda$ if the chain starts with probability one at a given state $x$ (i.e.\  if $\lambda=1_x$).  Similarly, we use $\Eb_\lambda$ (resp. $\Eb_x$) to denote expectation with respect to $\Pb_\lambda$ (resp. $\Pb_x$).

If $T_n$ denotes the $n^{th}$ jump time, then the limit
$$T_\infty:=\lim_{n\to\infty}T_n$$
is known as the \emph{explosion time}. It is \cite[Theorem~26.10]{Kuntz2020} the instant by which the chain has left every finite subset of the state space (or infinity should this event never occur). In the case of an SRN~\eqref{eq:network}, an explosion occurs if and only if the count of at least one species diverges to infinity in a finite amount of time.  We say that the chain is \emph{non-explosive} if, with probability one, no such explosion occurs:
\begin{equation}\label{eq:nonexp}\Pbl{\{T_\infty=\infty\}}=1.\end{equation}
If \eqref{eq:nonexp} holds for every initial distribution $\lambda$, then the rate matrix $Q$ is said to be \emph{regular}.  
To simplify the exposition, \textit{we assume throughout this review that rate matrices $Q$ are regular}, an assumption typically verified using Theorem~\ref{lyareg} below. For information on how the results of this section generalise beyond the regular case, see Appendix~\ref{app:nonreg}.

\subsection{The time-varying law and the chemical master equation (CME)}\label{sec:cme}

We denote the probability of observing the chain in the state $x$ at time $t$ by
$$p_t(x)=\Pbl{\{X_t=x\}},$$
and we refer to the distribution $p_t:=(p_t(x))_{x\in\s}$ as the \emph{time-varying law}. 
We denote the $p_t$-average of a real-valued function $f$ on $\s$ by
\[p_t(f):=\sum_{x\in\s}f(x) \, p_t(x) \,, \]
provided that the sum is well-defined.   
In general, given a distribution $\rho$, we denote the $\rho$-average of $f$  by:
\[\rho(f):=\sum_{x\in\s}f(x)\rho(x) \, . \]

Assuming that $Q$ is regular,  the time-varying law is the only solution of the \emph{chemical master equation} (\emph{CME})
\begin{align}\label{eq:CME}
\frac{\mathrm{d}p_t(x)}{\mathrm{d}t} = p_tQ(x):=\sum_{z\in\s}p_t(z)q(z,x),\qquad p_0(x)=\lambda(x),\qquad\forall x\in\s,
\end{align}
that is a probability distribution on $\s$ for all $t\geq0$, see \cite[Theorems~2.8.4~and~2.8.6]{Norris1997} for the case $\lambda=1_x$ and \cite[Section~33]{Kuntz2020} for a general initial distribution $\lambda$.

\subsection{The stationary distributions and the stationary solutions of the CME}\label{sec:statintro}
 
A probability distribution $\pi:=(\pi(x))_{x\in\s}$ on $\s$ is said to be a \emph{stationary distribution} of the chain if setting the initial distribution equal to $\pi$ ensures that the chain remains distributed according to $\pi$ for all time:
\begin{equation}\label{eq:statdef}\Pbp{\{X_t=x\}}=\pi(x),\quad\forall x\in\s,\enskip t\geq0.\end{equation}
Taking time-derivatives of both sides of \eqref{eq:statdef}, we find that stationary distributions are fixed points of the CME~\eqref{eq:CME}:
\begin{equation}\label{eq:stat}\pi Q(x)=0\qquad\forall x\in\s.\end{equation}
We call any probability distribution $\pi$ satisfying \eqref{eq:stat} a \emph{stationary solution} of the CME. If the rate matrix is regular, then \emph{any} stationary solution is also a stationary distribution of the chain:
\begin{theorem}[{\cite[Theorem~1]{Miller1963}}]\label{Qstateq} If $Q$ is regular, then a probability distribution $\pi$ satisfies~\eqref{eq:statdef} if and only if it satisfies \eqref{eq:stat}.
\end{theorem}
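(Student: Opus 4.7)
The plan is to exploit the uniqueness of probability-distribution solutions to the CME~\eqref{eq:CME}, which, as noted just above the theorem, holds whenever $Q$ is regular.

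For the forward implication, suppose $\pi$ satisfies~\eqref{eq:statdef}. Then running the chain from initial distribution $\lambda=\pi$ yields time-varying law $p_t(x)=\pi(x)$ for every $x\in\s$ and $t\geq0$. Since $p_t$ solves the CME, we may differentiate both sides in $t$: the left-hand side is identically zero because $p_t$ is constant in $t$, and the right-hand side equals $\pi Q(x)$. Hence $\pi Q=0$, which is exactly~\eqref{eq:stat}. (The interchange of derivative and sum needed to invoke the CME is already built into the definition of $p_tQ$ used in~\eqref{eq:CME}.)

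For the converse, suppose $\pi$ is a probability distribution on $\s$ with $\pi Q=0$. Define the constant family $\tilde p_t:=\pi$ for all $t\geq0$. Then $\tilde p_t$ is a probability distribution on $\s$ for each $t$, its time derivative vanishes, and $\tilde p_t Q=\pi Q=0$, so $\tilde p_t$ is a probability-distribution solution of the CME with initial condition $\lambda=\pi$. On the other hand, the time-varying law $p_t$ obtained by starting the chain from $\pi$ is, by the result cited in Section~\ref{sec:cme}, the \emph{unique} probability-distribution solution of the CME with that initial condition. Hence $p_t=\tilde p_t=\pi$ for every $t\geq0$, which is~\eqref{eq:statdef}.

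The only substantive ingredient is the uniqueness half of the CME well-posedness theorem, and that is precisely where the regularity assumption on $Q$ is used; without it, the constant family $\tilde p_t\equiv\pi$ could coexist with other probability-distribution solutions and the argument would fail. Everything else is bookkeeping, so I do not anticipate a genuine obstacle beyond quoting \cite[Theorems~2.8.4~and~2.8.6]{Norris1997} (for $\lambda=1_x$) together with its extension to general $\lambda$ in \cite[Section~33]{Kuntz2020}.
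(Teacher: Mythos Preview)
Your argument is correct. The paper does not actually prove this theorem: it simply attributes the result to \cite[Theorem~1]{Miller1963} and states it without proof (the forward implication is mentioned informally in the sentence preceding the theorem, but the converse is left entirely to the citation). Your proof is therefore not a reproduction of the paper's approach but a genuine self-contained argument.

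What you do differently is to leverage the CME well-posedness result that the paper already quotes in Section~\ref{sec:cme} (uniqueness of probability-distribution solutions under regularity, from \cite{Norris1997} and \cite{Kuntz2020}) rather than appealing to Miller's original paper. This is a clean and economical route: once that uniqueness statement is available, the converse reduces to the one-line observation that the constant family $\tilde p_t\equiv\pi$ is a probability-distribution solution of the CME with initial datum $\pi$. The advantage of your approach is that it stays within the machinery the review has already set up, so a reader need not consult an additional reference; the trade-off is that it relies on the CME uniqueness theorem, which is itself non-trivial (and whose proof in the cited sources ultimately encodes the same regularity considerations that Miller handles directly).
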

\subsection{The long-term behaviour I: the stability of the chain}\label{sec:stability}

Stationary distributions often determine the chain's long-term behaviour. In particular, for many chains, the time-varying law converges to a stationary distribution $\pi$ in total variation (c.f.\ Section~\ref{sec:convergence}),
\begin{equation}\label{eq:spaceaverages}\lim_{t\to\infty}\norm{p_t-\pi}_{TV}=0,\end{equation}
and so does the \emph{empirical distribution} $\epsilon_T$,
\begin{equation}\label{eq:timeaverages}\lim_{T\to\infty}\norm{\epsilon_T-\pi}_{TV}=0\quad\Pb_\lambda\text{-almost surely},\end{equation}
where $\epsilon_T(x)$ denotes the fraction of the time-interval $[0,T]$ that the chain spends in state $x$:
\begin{equation}\label{eq:empirical}\epsilon_T(x):=\frac{1}{T}\int_0^{{T}}1_x(X_t)dt\quad\forall x\in\s.\end{equation}
As we will see in Theorem \ref{doeblinc} below, the stationary distributions featuring in \eqref{eq:spaceaverages} and \eqref{eq:timeaverages} generally differ. {Moreover, in contrast with the time-varying law $p_t$, the empirical distribution $\epsilon_T$ is a random object depending on the chain's path and, consequently, the limiting distribution $\pi$ in~\eqref{eq:timeaverages} may be a random combination of stationary distributions (with a slight abuse of terminology, we also refer to $\pi$ as a stationary distribution).}

In a series of articles~\cite{Meyn1992,Meyn1993a,Meyn1993b}, S.~P.~Meyn and R.~L.~Tweedie showed that for all starting conditions there exists a stationary distribution satisfying \eqref{eq:spaceaverages}  {and another satisfying}~\eqref{eq:timeaverages}
if and only if the chain is \emph{bounded in probability}. (In fact, the results in \cite{Meyn1993a,Meyn1993b} are phrased in terms of a slightly more involved property, \emph{boundedness in probability on average}, but these two properties coincide for continuous-time chains, see Appendix~\ref{app:bip} for details). A chain is bounded in probability if and only if for each $0<\varepsilon< 1$ and deterministic initial condition $x$, there exists a finite set $F\subseteq\s$ such that the probability that $X_t$ lies in $F$ is at least $(1-\varepsilon)$, for all sufficiently large times $t$:
\begin{equation}\label{eq:bip}\liminf_{t\to\infty}\Pbx{\{X_t\in F\}}\geq 1-\varepsilon.\end{equation}
A chain with a regular rate matrix is bounded in probability~\cite{Meyn1993a} if and only if  it is not
\begin{itemize}
\item \emph{transient:} the paths diverge to infinity in an infinite amount of time;
\item or \emph{null recurrent:} the paths do not tend to infinity but they do explore ever larger regions of the state space in a manner that the empirical distribution tends pointwise to zero as  time progresses;
\item or a combination of the above.
\end{itemize}
Because the above are viewed as unstable behaviours, chains that are bounded in probability are typically considered stable. {As we see in the following, these are the chains that admit stationary distributions.}

\subsection{The long-term behaviour II: the set of stationary distributions}
\label{sec:doeblin}

Stationary distributions need not be unique. 
Non-uniqueness arises only if the state space breaks down into several disjoint sets that the chain is unable to leave. In particular, we decompose the state space as
\begin{equation}\label{eq:ddc}
\s=\left(\bigcup_{i\in\cal{I}}\cal{C}_{i}\right)\cup \cal{T},\end{equation}
where $\{\cal{C}_i\}_{i\in\cal{I}}$ are the \emph{closed communicating classes}, $\cal{I}$ is their indexing set, and $\cal{T}:=\s\backslash \cup_{i\in\cal{I}}\cal{C}_i$  contains all states that do not belong to one of these classes. 
A set $\cal{C}\subseteq\s$ is said to be a closed communicating class if the chain can travel between any pair of states in $\cal{C}$ (via one or more jumps) but cannot travel from any state inside $\cal{C}$ to one outside. In terms of the rate matrix, $\cal{C}$ is a closed communicating class if and only if  given any $x,y\in\cal{C}$ {there exists a sequence of states $x_1,x_2,\dots,x_l\in\s$ through which the chain can travel from $x$ to $y$, i.e.\ } 
\begin{equation}\label{eq:irred}
q(x,x_1)\,q(x_1,x_2)\dots q(x_{l-1},x_l) \, q(x_l,y)>0,\end{equation}
but no such sequence exists 
if $y$ lies outside of $\cal{C}$, see \cite[Theorem~3.2.1]{Norris1997}. It follows that the closed communicating classes must be disjoint sets.

For instance, consider the simple SRN~\cite{Kuntz2017}
$$ \emptyset\xrightarrow{a_1} 2S_1\xrightarrow{a_2}\emptyset,\qquad S_2\xrightarrow{a_3}\emptyset,$$
with mass action-kinetics (e.g.\ $a_1(x)=1$, $a_2(x)=x_1(x_1-1)/2$, and $a_3(x)=x_2$). As all reactions preserve the parity of the number of $S_1$ molecules and no reaction produces molecules of $S_2$, the state space $\s=\n^2$ of the network  decomposes into
\begin{align*}\underbrace{\{(x_1,0):x_1\in\n\text{ is odd}\}}_{\textstyle\begin{array}{c}\cal{C}_1\end{array}}\cup\underbrace{\{(x_1,0):x_1\in\n\text{ is even}\}}_{\textstyle\begin{array}{c}\cal{C}_2\end{array}}\cup\underbrace{\{(x_1,x_2):x_1\in\n, x_2\in\zp\}}_{\textstyle\begin{array}{c}\cal{T}\end{array}},\end{align*}
{where $\zp$ denotes the set of positive integers.} Because many SRNs posses conservation laws, it is also not uncommon for there to exist infinitely many closed communicating classes. For example, the reactions
$$2S_1\rightleftarrows{}S_2,$$ 
conserve the quantity $i:=x_1+2x_2$. Hence, with the choice of state space $\s:=\n^2$, there exists a different closed communicating class for every $ i$~in~$\n$.

Suppose that the chain is stable (in the sense of Section~\ref{sec:stability}). If it starts in a closed communicating class, then it will never escape and the time and ensemble averages converge to {the} stationary distributions  featuring in \eqref{eq:spaceaverages}--\eqref{eq:timeaverages}. For this reason, there {must} exist at least one such $\pi^i$ per closed communicating class $\cal{C}_i$ with support contained in $\cal{C}_i$ ($\pi^i(\cal{C}_i):=\sum_{x\in\cal{C}_i}\pi^i(x)=1$) {and the class is said to be \emph{positive recurrent}}. Because the states within a class communicate, it can be shown that this distribution is unique. It is referred to as the \emph{ergodic distribution} associated with class $\cal{C}_i$.

{On the other hand, the chain visits any given state $x$ in $\cal{T}$ at most finitely many times.  It follows that the probability of the chain being at state $x$ decays to zero as time progresses }and thus, by \eqref{eq:statdef}, it must be the case that $\pi(x)=0$ for any state $x$ in $\cal{T}$. Because we are assuming that the chain does not diverge to infinity, it follows that it eventually enters a closed communicating class. The strong Markov property then implies that the stationary distribution $\pi$ in \eqref{eq:timeaverages} is the ergodic distribution $\pi^i$ associated with the class $\cal{C}_i$ that the sample path enters and the stationary distribution in \eqref{eq:spaceaverages} is a combination of the ergodic distributions weighed by the probabilities of entering the corresponding communicating classes. These facts are summarised in the following theorem {whose proof can be found in Appendix~\ref{app:doeblin}}. 
\begin{theorem}\label{doeblinc} {Suppose that $Q$ is regular and} let $\{\cal{C}_i\}_{i\in\cal{I}}$ be as in \eqref{eq:ddc}.
\begin{enumerate}[label=(\roman*)]
\item For each $i$ in $\cal{I}$, there exists at most one stationary distribution $\pi^i$ with support contained in class $\cal{C}_i$ (i.e.\  $\pi^i(\cal{C}_i)=1$).  
\item A probability distribution $\pi$ is a stationary distribution of the chain if and only if it is a convex combination of the ergodic distributions:
$$\pi=\sum_{i\in\cal{I}_e}\theta_i\pi^i$$
for some collection $(\theta_i)_{i\in\cal{I}_e}$ of non-negative weights satisfying $\sum_{i\in\cal{I}_e}\theta_i=1$, where $\cal{I}_e$ gathers the indices $i\in\cal{I}$ of the positive recurrent classes.
\item The chain is bounded in probability if and only if all closed communicating classes are positive recurrent and, regardless of the initial distribution $\lambda$, the chain enters one of these classes with probability one:
$$\Pbl{\cup_{i\in\cal{I}}H_i}=\sum_{i\in\cal{I}}\Pbl{H_i}=1$$
for all probability distributions $\lambda$, where $H_i$  denotes the event that the chain enters the closed communicating class $\cal{C}_i$. 
\item The chain is bounded in probability if and only if, for all initial distributions $\lambda$, \eqref{eq:spaceaverages} holds with
\begin{equation}\label{eq:pilim}\pi:=\sum_{i\in\cal{I}}\Pbl{H_i}\pi^i\end{equation}
and \eqref{eq:timeaverages} holds with
$$\pi:=\sum_{i\in\cal{I}}1_{H_i}\pi^i,$$
where $1_{H_i}$ denotes the indicator function of the event $H_i$.
\end{enumerate} 
\end{theorem}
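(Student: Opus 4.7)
The plan is to reduce each statement to the classical irreducible setting by restricting the chain to each closed communicating class $\cal{C}_i$. Since $\cal{C}_i$ is closed, the sub-matrix $Q|_{\cal{C}_i}$ is itself a regular rate matrix defining an irreducible CTMC on $\cal{C}_i$, and \emph{(i)} then follows from the classical fact that an irreducible regular CTMC admits at most one stationary distribution (see, e.g., \cite{Norris1997}).

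For the non-trivial direction of \emph{(ii)}, let $\pi$ be a stationary distribution. I would first show that $\pi(\cal{T})=0$: for any $x\in\cal{T}$, the chain visits $x$ only finitely often $\Pb_y$-almost surely (otherwise $x$ would communicate with itself through states outside every closed class, contradicting \eqref{eq:irred}), so $\Pb_y\{X_t=x\}\to 0$ as $t\to\infty$ for each $y\in\s$; since $\pi$ is stationary, $\pi(x)=\sum_y\pi(y)\Pb_y\{X_t=x\}$, and dominated convergence yields $\pi(x)=0$. Next, for each class $\cal{C}_i$ with $\theta_i:=\pi(\cal{C}_i)>0$, the normalised restriction $\pi|_{\cal{C}_i}/\theta_i$ is a stationary distribution on $\cal{C}_i$ (stationarity is preserved because $\cal{C}_i$ is closed, so off-diagonal entries of $Q$ leaving $\cal{C}_i$ vanish), hence equals the ergodic distribution $\pi^i$ by \emph{(i)}; in particular $\cal{C}_i$ must be positive recurrent, i.e.\ $i\in\cal{I}_e$. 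Summing over $i$ delivers the convex combination, while the reverse direction is immediate from the linearity of $\pi Q=0$.

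For \emph{(iii)}, the ``if'' direction would proceed by conditioning on the partition $\cup_iH_i$, applying the strong Markov property at the hitting time $\tau_i:=\inf\{t\geq 0: X_t\in\cal{C}_i\}$, and using that each $\pi^i$ is tight on $\cal{C}_i$: given $\varepsilon>0$, choose finite sets $F_i\subseteq\cal{C}_i$ with $\pi^i(F_i)\geq 1-\varepsilon$, select finitely many of them carrying most of the mass of $\sum_i\Pb_\lambda(H_i)\pi^i$, and verify \eqref{eq:bip} for the union. The ``only if'' direction is by contraposition: if some $\cal{C}_i$ is null recurrent or transient, starting the chain inside $\cal{C}_i$ directly violates tightness (no probability stationary distribution exists on $\cal{C}_i$); if instead $\Pb_\lambda(\cup_iH_i)<1$, the non-explosivity of $Q$ together with the observation that from $\cal{T}$ the chain almost surely either hits some $\cal{C}_i$ or leaves every finite subset forever forces the mass on the complement of $\cup_iH_i$ to escape to infinity, again breaking \eqref{eq:bip}.

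Finally, for \emph{(iv)}, I would again decompose by $H_i$ and apply the strong Markov property at $\tau_i$: on $H_i$ the post-$\tau_i$ chain is a regular irreducible CTMC on $\cal{C}_i$, so the classical ergodic theorem (e.g., \cite{Norris1997}) yields convergence of its time-$t$ law and of its empirical distribution to $\pi^i$; summing over $i$ weighted by $\Pb_\lambda(H_i)$ (respectively $1_{H_i}$) produces \eqref{eq:spaceaverages} with $\pi$ as in \eqref{eq:pilim} and its pathwise analogue for \eqref{eq:timeaverages}. The main obstacle I anticipate is the ``only if'' direction of \emph{(iii)}: turning the defect $\Pb_\lambda(\cup_iH_i)<1$ into an explicit failure of \eqref{eq:bip} requires a careful excursion-based argument to exclude the possibility that the chain wanders indefinitely in $\cal{T}$ while remaining in a finite set with positive probability, and this is where the regularity assumption on $Q$ enters most crucially.
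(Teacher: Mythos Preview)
Your approach is correct in spirit but genuinely different from the paper's. The paper does not give self-contained arguments: for $(ii)$ it cites \cite[Theorem~43.17]{Kuntz2020}, and for $(iii)$--$(iv)$ it invokes Theorems~8.1--8.2 of Meyn--Tweedie \cite{Meyn1993a} (general Markov process results, specialised via the discrete topology so that the chain is automatically a ``T-process''), then lifts from deterministic to general $\lambda$ using $\Pb_\lambda=\sum_x\lambda(x)\Pb_x$. Your route---reduce to the irreducible case on each closed class, show $\pi(\cal{T})=0$ via transience and dominated convergence, and handle $(iii)$--$(iv)$ by conditioning on the $H_i$ and the strong Markov property at $\tau_i$---is more elementary and self-contained, which is pedagogically attractive; the price is that the ``only if'' part of $(iii)$ (which you correctly flag) requires you to argue carefully that on $(\cup_iH_i)^c$ the chain, being confined to the transient set $\cal{T}$, eventually leaves every finite set permanently, whereas the paper simply imports this from \cite{Meyn1993a}.

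One genuine omission: you treat only the forward direction of $(iv)$. The paper handles the converse by a short mass argument: if \eqref{eq:spaceaverages} holds for all $\lambda$ with $\pi$ as in \eqref{eq:pilim}, then Tonelli and monotone convergence give
\[
\sum_{i\in\cal{I}}\Pbl{H_i}=\sum_{i\in\cal{I}}\Pbl{H_i}\pi^i(\s)=\sum_{x\in\s}\pi(x)=\lim_{t\to\infty}\sum_{x\in\s}p_t(x)=1,
\]
and taking $\lambda=1_x$ with $x\in\cal{C}_i$ forces each $\pi^i$ to exist; boundedness in probability then follows from $(iii)$. You should add this step. A smaller point: in your $(ii)$, the justification that $\pi|_{\cal{C}_i}/\theta_i$ solves the restricted stationary equations needs not only that $\cal{C}_i$ is closed but also that $\pi(\cal{T})=0$ (to kill the contribution from $y\in\cal{T}$ with $q(y,x)>0$); you have already established this, but the parenthetical as written is slightly misleading.
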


The chain (or rate matrix) is said to be \emph{$\varphi$-irreducible} if there exists only one closed communicating class $\cal{C}$ and the chain has positive probability of  travelling from any state outside of the class to the states inside (i.e.\  for all $x\in\cal{T}$ and $y\in\cal{C}$, there exists a sequence $x_1,x_2,\dots,x_l\in\s$ satisfying  \eqref{eq:irred}). If this class is the entire state space ($\cal{C}=\s$), then the chain (or rate matrix) is said to be \emph{irreducible}. Theorem \ref{doeblinc} has the following well-known corollary for irreducible chains.

\begin{corollary}\label{iredcor} Suppose that the chain {is} $\varphi$-irreducible and $Q$ is regular. Then, it has at most one stationary distribution $\pi$. If the chain is bounded in probability, then  $\pi$ exists and  \eqref{eq:spaceaverages}--\eqref{eq:timeaverages} hold for all initial distributions $\lambda$. If the chain is irreducible, then the existence of $\pi$ implies that the chain is bounded in probability.
\end{corollary}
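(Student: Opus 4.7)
The plan is to derive Corollary~\ref{iredcor} as a direct specialisation of Theorem~\ref{doeblinc} to the case where the index set $\cal{I}$ of closed communicating classes in the decomposition~\eqref{eq:ddc} is a singleton. Labelling its sole element $1$ so that $\cal{C}_1 = \cal{C}$, the subset $\cal{I}_e \subseteq \cal{I}$ contains at most one element, and this simple observation underpins each of the three implications in the corollary.

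For uniqueness, I would combine parts (i) and (ii) of Theorem~\ref{doeblinc}: every stationary distribution has the form $\sum_{i\in\cal{I}_e}\theta_i\pi^i$ by (ii), and since $|\cal{I}_e|\le 1$ any such combination is either empty or reduces to the single distribution $\pi^1$, whose uniqueness is exactly (i). For the forward existence-and-convergence statement under boundedness in probability, I would invoke (iii) to obtain $\cal{I}_e = \cal{I} = \{1\}$ (so $\pi^1$ exists) together with $\Pbl{H_1}=1$ for every initial $\lambda$; substituting these into the formulas of (iv) collapses both sums to a single term, yielding \eqref{eq:spaceaverages} and \eqref{eq:timeaverages} with $\pi = \pi^1$.

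The converse under irreducibility is where the strengthened hypothesis becomes essential: irreducibility gives $\cal{T} = \emptyset$, hence $\cal{C}_1 = \s$, so the chain cannot leave $\cal{C}_1$ and $\Pbl{H_1}=1$ holds trivially for every initial distribution. If $\pi$ exists, then (ii) forces $\cal{I}_e$ to be nonempty, so $\cal{I}_e=\{1\}$ and $\cal{C}_1$ is positive recurrent; both conditions of (iii) are then met and the chain is bounded in probability. The subtle point worth emphasising, which I expect to be the main conceptual obstacle, is why $\varphi$-irreducibility alone does not suffice here: with nonempty $\cal{T}$, the existence of $\pi^1$ pins down the behaviour of the chain only once it has entered $\cal{C}$, but sample paths issued from $\cal{T}$ may still escape to infinity, so $\Pbl{H_1}$ need not equal $1$ and condition (iii) can fail.
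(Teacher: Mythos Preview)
Your proposal is correct and follows precisely the route the paper intends: the corollary is stated immediately after Theorem~\ref{doeblinc} as a ``well-known corollary'' without explicit proof, so deriving each assertion by specialising parts (i)--(iv) of that theorem to the case $|\cal{I}|=1$ is exactly the implied argument. Your handling of the three implications is accurate, and the closing remark on why $\varphi$-irreducibility alone fails to give the converse is a useful clarification that the paper leaves implicit.
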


If the chain is both $\varphi$-irreducible and bounded in probability, then Corollary \ref{iredcor} shows that, asymptotically, the \emph{space} (or \emph{population}, or \emph{ensemble}) averages and the \emph{time} (or \emph{empirical}) averages coincide:
$$p_T\approx \epsilon_T,\quad\Pb_\lambda\text{-almost surely}$$
for large enough $T$, and the chain is said to be \emph{ergodic}. 
{If, additionally, the rate of convergence in~\eqref{eq:spaceaverages} is exponential for all deterministic initial conditions, i.e.\ for all $x$ in $\mathcal{S}$ there exists a $\alpha>0$ such that
\begin{align}\label{eq:experg}
\norm{p_t-\pi}_{TV}\leq \cal{O}(e^{-\alpha t})  \end{align}
with $\lambda=1_x$, then the chain is further classified as \emph{exponentially ergodic}. 
} %

Establishing whether a chain is $\varphi$-irreducible or irreducible is a challenging problem {for which} several computational methods have been proposed~\cite{Pauleve2014,Kuntz2017,Gupta2018}. {The stability properties of chains are typically investigated using \emph{Foster-Lyapunov criteria}.}

\subsection{Foster-Lyapunov stability criteria}\label{sec:foster} Except for special cases~\cite{Gupta2014,Anderson2018,Anderson2018a,Rathinam2015,Engblom2014}, ruling out unstable behaviours and establishing boundedness in probability are difficult tasks. Often, we must resort to Foster-Lyapunov criteria (also known as \emph{drift conditions}),  named jointly  after A.~Lyapunov~\cite{Lyapunov1892} who first introduced these types of conditions in his study of  ordinary differential equations and F.~G.~Foster who first ported them to a stochastic setting \cite{Foster1953}. These criteria also yield bounds on stationary averages: morsels of information important for a host of numerical approaches used to study the long-term behaviour of chains including some of those discussed in Sections~\ref{sec:truncs}--\ref{fiveschemes}. We review now the most common criteria, {whose proofs can be found elsewhere---see Appendix~\ref{app:lyaproofs} for appropriate references.} 

We begin with the criterion for regularity which involves a \emph{norm-like} function $w$: a real-valued function on $\s$ with finite sublevel sets, i.e.\  such that
\begin{equation}\label{eq:sublevel}\s_r:=\{x\in\s:w(x)<r\}\quad\text{is finite for all}\quad r=1,2,\dots.\end{equation}
The criterion for regularity then goes as follows:
\begin{theorem}\label{lyareg} The rate matrix $Q$ is regular if and only if there exists a norm-like $v$ such that
$$Qv(x):=\sum_{y\in\s}q(x,y)v(y)\leq d_1v(x)+d_2\qquad\forall x\in\s,$$
for some constants $d_1,d_2$ in $\r$.
\end{theorem}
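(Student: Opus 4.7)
The plan is to attack the two implications separately. The ``if'' direction is the classical Foster--Lyapunov argument built from Dynkin's formula, Grönwall's inequality, and Fatou's lemma; the ``only if'' direction is the genuinely delicate part, since one must extract a concrete norm-like function out of the abstract non-explosion property, and I expect this to be the main obstacle.

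For sufficiency, fix an initial state $x\in\s$ and let $T_n$ denote the $n$th jump time. Because the pre-$T_n$ trajectory visits only the finitely many states $\{X_0,X_{T_1},\dots,X_{T_{n-1}}\}$, the function $v$ is bounded along it, so Dynkin's formula applies cleanly to the stopped process $v(X_{t\wedge T_n})$. Combining Dynkin's identity with the drift hypothesis $Qv\leq d_1 v+d_2$ and interchanging expectation and integration via Tonelli yields
\[
\Eb_x[v(X_{t\wedge T_n})]\;\leq\; v(x)+d_2 t+d_1\int_0^t\Eb_x[v(X_{s\wedge T_n})]\,ds.
\]
Grönwall's lemma then delivers an $n$-uniform finite bound of the form $\Eb_x[v(X_{t\wedge T_n})]\leq C(v(x),d_1,d_2,t)<\infty$. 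Since $v$ is norm-like and on the event $\{T_\infty\leq t\}$ the chain has left every finite subset of $\s$ by time $t$, we have $v(X_{t\wedge T_n})=v(X_{T_n})\to\infty$ pointwise on this event. Fatou's lemma therefore gives
\[
(+\infty)\cdot\Pb_x(\{T_\infty\leq t\})\;\leq\;\liminf_{n\to\infty}\Eb_x[v(X_{t\wedge T_n})]\;\leq\; C(v(x),d_1,d_2,t)\;<\;\infty,
\]
which forces $\Pb_x(T_\infty\leq t)=0$ for every $x$ and $t$, i.e.\ regularity.

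For necessity, my plan is to realise $v$ via the resolvent of the chain. When $Q$ is regular, the semigroup $(P_t)$ is honest and the resolvent $R_\alpha:=\int_0^\infty e^{-\alpha s}P_s\,ds$ satisfies the identity $(\alpha I-Q)R_\alpha=I$, so setting $v:=R_\alpha g$ for any non-negative $g$ automatically gives $Qv=\alpha v-g\leq\alpha v$, i.e.\ the drift inequality with $d_1=\alpha$ and $d_2=0$. The substantive work is to choose $g$ so that $v$ is simultaneously (i) pointwise finite and (ii) norm-like. Since the resolvent of a bounded $g$ is bounded, $g$ itself must grow without bound, and I would aim to take $g$ norm-like while tuning its growth so that the occupation integrals $\Eb_x[\int_0^\infty e^{-\alpha s}g(X_s)\,ds]$ remain finite at every fixed $x$ yet diverge as $x$ escapes every finite subset of $\s$. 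The hard part will be precisely this balance: without irreducibility there is no distinguished reference state whose hitting times control the resolvent, so $g$ must be tailored by patching together local growth estimates across a sequence of finite sets $B_n\uparrow\s$, using only the regularity-implied pointwise finiteness of $P_s g$ at each fixed $x$.
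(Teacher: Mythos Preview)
The paper does not give its own proof of this theorem: it simply cites Chen~(1986), Anderson~(1991), and Meyn--Tweedie~(1993) for the implication ``Lyapunov function $\Rightarrow$ regular'' and Spieksma~(2015) for the converse. Your sufficiency argument via Dynkin, Gr\"onwall, and Fatou is correct and is exactly the classical route found in those first three references, so that half is fine.

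The necessity direction, however, is only a sketch with a real gap. You propose $v:=R_\alpha g$ and correctly observe that this automatically yields $Qv=\alpha v-g\leq\alpha v$; but you then need $v$ to be both everywhere finite and norm-like, and you have not shown either. Finiteness is not free once $g$ is unbounded, and norm-likeness is genuinely problematic: the crude lower bound coming from the first holding time is $v(x)\geq g(x)/(\alpha+q(x))$, which says nothing when $q(x)\to\infty$ faster than $g(x)$, and there is no reason the resolvent should inherit the growth of $g$ in general. Your closing paragraph acknowledges this difficulty but does not resolve it; ``patching together local growth estimates across $B_n\uparrow\s$'' is a hope, not an argument. Spieksma's construction (the reference the paper defers to) does build $v$ from an exhausting sequence of finite sets, but via exit-time quantities rather than a single resolvent of a pre-chosen $g$, precisely because that route gives direct control over the sublevel sets. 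As it stands, your necessity argument identifies the right obstacle but does not overcome it.
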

The Foster-Lyapunov criterion for boundedness in probability (or ergodicity in the $\varphi$-irreducible case) is as follows:
\begin{theorem}\label{lyabia}Suppose that $Q$ is regular and that for some finite set $F$, constant $d>0$ and functions $f\geq1$, $v\geq0$,
\begin{equation}\label{eq:lyabia}Qv(x)\leq d \, 1_F(x)-f(x)\qquad\forall x\in\s,\end{equation}
where $1_F$ denotes the indicator function of the set $F$ (i.e.\  $1_F(x)=1$ if $x\in F$ and $0$ otherwise). The chain is bounded in probability. Moreover, for any stationary distribution $\pi$, the average $\pi(f)$ is bounded by $d$:
\begin{equation}\label{eq:lyabound}1\leq \pi(f)\leq d\sum_{x\in F}\pi(x)\leq d;\end{equation} 
and the probability that $\pi$ awards to the complement of $F$ is bounded by $1-1/d$:
\begin{equation}\label{eq:lyabound2}\sum_{x\not\in F}\pi(x)=1-\sum_{x\in F}\pi(x)\leq 1-\frac{\pi(f)}{d}\leq 1-\frac{1}{d}.\end{equation} 
{Conversely, if $Q$ is regular and the state space is comprised of finitely many closed communicating classes (i.e.\  $\cal{T}$ in~\eqref{eq:ddc} is empty and $\cal{I}$ therein is finite)}, then the criterion is sharp: if the chain is bounded in probability, then there exists a finite set $F$, constant $d>0$, and functions $f\geq1$, $v\geq0$ satisfying \eqref{eq:lyabia}.
\end{theorem}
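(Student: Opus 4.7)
The plan is to treat the two directions separately: for the forward direction, the backbone will be an integrated Dynkin-type identity for $v$ along the chain, from which both the return-frequency bound and the stationary estimates fall out; for the converse, I would construct $v$ explicitly as an expected hitting time of a finite set containing one reference state per closed communicating class, exploiting positive recurrence from Theorem~\ref{doeblinc}(iii).

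For the forward direction, I would first apply optional stopping at the exit times $T_n := \inf\{t : X_t \notin \s_n\}$ of the sublevel sets of a regularity witness from Theorem~\ref{lyareg} to obtain
\[
\Ebx{v(X_{t \wedge T_n})} - v(x) = \Ebx{\int_0^{t \wedge T_n} Qv(X_s)\, ds}.
\]
Since $Q$ is regular, $T_n \uparrow \infty$ $\Pb_x$-almost surely, and since $v \geq 0$, Fatou lets me pass to the limit and substitute $Qv \leq d\, 1_F - f$ to obtain
\[
\Ebx{\int_0^t f(X_s)\, ds} \leq v(x) + d\int_0^t \Pbx{X_s \in F}\, ds.
\]
Because $f \geq 1$, dividing by $t$ and letting $t \to \infty$ yields $\liminf_{t\to\infty} \frac{1}{t}\int_0^t \Pbx{X_s \in F}\, ds \geq 1/d$, which is boundedness in probability on average; by the equivalence noted in the text and detailed in Appendix~\ref{app:bip}, this gives boundedness in probability. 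For \eqref{eq:lyabound}--\eqref{eq:lyabound2}, I would average the integrated inequality under the initial distribution $\pi$; stationarity collapses the bound to $t\,\pi(f) \leq \pi(v) + d\,t\,\pi(F)$, and dividing by $t$ and sending $t \to \infty$ yields $\pi(f) \leq d\,\pi(F)$. The issue that $\pi(v)$ may be infinite I would sidestep by running the argument first with $v \wedge n$ (whose $\pi$-average is at most $n$) and then passing $n \to \infty$ via monotone convergence on the $f$-side.

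For the converse, the hypotheses that $\cal{T}$ is empty and $\cal{I}$ finite, together with boundedness in probability, force each $\cal{C}_i$ to be positive recurrent by Theorem~\ref{doeblinc}(iii). I would pick a reference state $x_i \in \cal{C}_i$ for each $i \in \cal{I}$, set $F := \{x_i : i \in \cal{I}\}$, and define $v(x) := \Ebx{\tau_F}$ with $v(x) := 0$ for $x \in F$, where $\tau_F := \inf\{t \geq 0 : X_t \in F\}$. Positive recurrence (together with finiteness of $\cal{I}$) makes $v$ finite on all of $\s$, and first-step analysis through the embedded jump chain~\eqref{eq:jumpmat} delivers $Qv(x) = -1$ for $x \notin F$, while for $x \in F$ one has $Qv(x) = \sum_{y \neq x} q(x,y) v(y)$, which is finite. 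Setting $d := 1 + \max_{x \in F} Qv(x)$ and $f \equiv 1$ then yields the required Foster-Lyapunov inequality.

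The main obstacles I anticipate are the integrability and sum-interchange technicalities in the stationary estimate (handled by the $v \wedge n$ truncation and a careful application of Fubini), and the rigorous justification that the mean-hitting-time $v$ in the converse satisfies the discrete Poisson equation $Qv = -1$ off $F$ and has finite $Qv$ on $F$ when $Q$ is only regular rather than uniformly bounded; for this I would apply Dynkin's formula stopped at $T_n \wedge \tau_F$ and pass to the limit using regularity together with finiteness of $\Ebx{\tau_F}$ on each class.
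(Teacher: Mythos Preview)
The paper does not prove this theorem directly: it cites \cite[Theorems~4.6--4.7]{Meyn1993b} for the forward direction and \eqref{eq:lyabound}, notes that \eqref{eq:lyabound2} is immediate from \eqref{eq:lyabound}, and defers the converse to \cite{Tweedie1981} and \cite{Kuntz2020}. Your proposal is therefore an attempt at a self-contained argument rather than a reconstruction of an in-paper proof. The converse via mean hitting times of a set of class representatives is the classical construction and is correct. The stationary bounds via the $v\wedge n$ localisation also work, once you observe that $Q(v\wedge n)\le Qv$ on $\{v<n\}$ while $Q(v\wedge n)(x)=\sum_{y\neq x}q(x,y)[(v\wedge n)(y)-n]\le 0$ on $\{v\ge n\}$, so that $0=\pi(Q(v\wedge n))\le d\,\pi(F\cap\{v<n\})-\pi(f\,1_{\{v<n\}})$ and monotone convergence finishes.

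There is, however, a genuine gap in your deduction of boundedness in probability. The integrated Dynkin estimate does yield
\[
\liminf_{t\to\infty}\frac{1}{t}\int_0^t\Pbx{X_s\in F}\,ds\ge \frac{1}{d},
\]
but this is not boundedness in probability on average in the sense of~\eqref{eq:bipa}: that definition demands, for \emph{every} $\varepsilon\in(0,1)$, a finite set $G_\varepsilon$ with liminf at least $1-\varepsilon$. A single finite set achieving liminf at least $1/d$ does not supply this; nothing in the displayed inequality alone prevents the remaining $1-1/d$ fraction of occupation mass from escaping to infinity. What the Dynkin estimate \emph{does} give, and what you should use instead, is the hitting-time bound $\Ebx{\sigma_F}\le v(x)<\infty$ (obtained by stopping at $\sigma_F\wedge T_n$ rather than $t\wedge T_n$, where $\sigma_F$ is the first entrance time into $F$). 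Iterating via the strong Markov property shows the chain returns to the finite set $F$ with uniformly bounded expected gaps, forcing each closed communicating class the chain can reach to be positive recurrent and the chain to enter such a class almost surely; Theorem~\ref{doeblinc}(iii)--(iv) then delivers boundedness in probability. This is precisely the substance of the Meyn--Tweedie result the paper invokes.
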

{We note that the converse need not hold if either $\cal{T}$ is non-empty or $\cal{I}$ is infinite, see \cite[Section 49]{Kuntz2020} for counter-examples.} Of course, by combining Theorems \ref{lyareg} and \ref{lyabia}, we obtain a criterion for both regularity and boundedness in probability. A considerably stronger result holds:
\begin{theorem}\label{lyacom}Suppose that there exists constants $d_1>0$ and $d_2\in\r$ and a norm-like function $v\geq1$ satisfying
\begin{equation}\label{eq:lyacom}Qv(x)\leq -d_1v(x)+d_2,\qquad\forall x\in\s.\end{equation}
{Then, }the chain is bounded in probability and $Q$ is regular. {Moreover,  for any initial distribution $\lambda$ satisfying $\lambda(v)<\infty$, the time-varying law converges exponentially fast, i.e.\ \eqref{eq:experg} holds for some $\alpha>0$ where $\pi$ is as in~\eqref{eq:pilim}.} 
\end{theorem}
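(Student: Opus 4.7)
The theorem packages three distinct assertions---regularity of $Q$, boundedness in probability, and exponential convergence of $p_t$ to $\pi$---so my plan is to establish them in that order, since only the last is genuinely substantive.

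Regularity is immediate from Theorem~\ref{lyareg}: since $d_1>0$ and $v\geq 1\geq 0$, the hypothesis gives $Qv(x)\leq -d_1v(x)+d_2\leq d_2$, fitting the template with constants $0$ and $d_2$. Boundedness in probability follows by reshaping \eqref{eq:lyacom} into the form \eqref{eq:lyabia} required by Theorem~\ref{lyabia}. Writing $-d_1v+d_2=-(d_1/2)v+(-(d_1/2)v+d_2)$ and choosing the finite set $F:=\s_r$ with $r:=(2d_2^++2)/d_1$ and $d_2^+:=\max\{d_2,0\}$, one checks that $-(d_1/2)v(x)+d_2\leq -1$ for $x\notin F$. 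Setting $f(x):=(d_1/2)v(x)+1\geq 1$ and $d:=2d_2^++2$ then verifies $Qv(x)\leq d\cdot 1_F(x)-f(x)$ for all $x\in\s$, and Theorem~\ref{lyabia} delivers the conclusion.

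For exponential convergence I would follow the classical Meyn--Tweedie route. First, Dynkin's formula applied to $v$ together with Gr\"onwall's inequality yields the moment bound $\Ebl{v(X_t)}\leq e^{-d_1 t}\lambda(v)+d_2/d_1$, which is finite by the hypothesis $\lambda(v)<\infty$ and gives, via Markov's inequality, an exponential rate at which the chain enters the union of closed communicating classes from $\cal{T}$. Second, on each positive-recurrent class $\cal{C}_i$ the hypothesis restricts to a geometric drift condition for the chain inside $\cal{C}_i$ (which is $\varphi$-irreducible there by definition); combined with a minorisation condition on some $t$-skeleton of $X|_{\cal{C}_i}$, this yields $v$-uniform ergodicity of the conditioned chain towards $\pi^i$. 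Third, I would glue the two rates via the convex-combination representation~\eqref{eq:pilim} from Theorem~\ref{doeblinc}(iv), integrating against the hitting distribution $(\Pbl{H_i})_{i\in\cal{I}}$, to recover \eqref{eq:experg}.

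The main obstacle is the third step in full generality: producing a single rate $\alpha$ in \eqref{eq:experg} that is uniform over the (possibly infinite) family of closed communicating classes. Although \eqref{eq:lyacom} gives uniform tail and hitting-time control across classes, securing a uniform minorisation on the skeletons requires more care---the cleanest resolution is to appeal to the continuous-time results of \cite{Meyn1993b} directly, as the paper indicates via Appendix~\ref{app:lyaproofs}.
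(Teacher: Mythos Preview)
Your proposal is correct and follows the paper's own route: reduce regularity and boundedness in probability to Theorems~\ref{lyareg} and~\ref{lyabia} respectively (the paper just says ``follow immediately'' where you spell out the constants), then hand off exponential convergence to the literature. The one place the paper is sharper is the non-$\varphi$-irreducible case you flag as the obstacle: \cite{Meyn1993b} presupposes $\varphi$-irreducibility, so citing it alone does not close your third step, and the paper instead invokes \cite[Theorem~50.4]{Kuntz2020} for the general statement rather than attempting the gluing argument you sketch.
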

Finding by hand a \emph{Lyapunov function} $v$ satisfying the inequalities in the above criteria often proves challenging. For this reason, methods that search for these functions computationally have been the subject of attention over the last few years \cite{Parrilo2000,Dayar2011,Spieler2014,Gupta2014,Argeitis2014}.

\subsection{Monte-Carlo estimators for the stationary distributions}
\label{sec:simulation}

In the case of a unique stationary distribution $\pi$, \eqref{eq:timeaverages} justifies the naive Monte-Carlo approach to approximating $\pi$: choose a \emph{final time} $T>0$, generate a sample path over $[0,T]$ using an exact simulation algorithm, such as the {Kendall-Gillespie algorithm}, compute the empirical distribution $\epsilon_T$ in \eqref{eq:empirical}, and use it as an estimate of $\pi$. 

When it comes to quantifying the error of $\epsilon_T$, only asymptotic results are known. For simplicity, consider using the \emph{empirical average} $\epsilon_T(f)$ as an approximation of the \emph{stationary average} $\pi(f)$, where
$$\epsilon_T(f):=\sum_{x\in\s}f(x)\epsilon_T(x)=\frac{1}{T}\int_0^Tf(X_t)dt,$$
for some given square $\pi$-integrable real-valued function $f$ on $\s$. A central limit theorem \cite[Prop.~IV.1.3]{Asmussen2007} shows that, as $T$ approaches infinity, the \emph{error} $e_f:=\pi(f)-\epsilon_T(f)$ converges in distribution to a zero-mean Gaussian 
with variance $\sigma^2/T$, known as the \emph{asymptotic variance}.
However, computing the constant $\sigma^2$ requires the unknown {auto-covariance function of $f(X_t)$~\cite[Prop.~IV.1.3]{Asmussen2007}:
\[
\sigma^2=\int_0^\infty\Eb_\pi \left[\left(f(X_t)-\pi(f)\right)\left(f(X_0)-\pi(f)\right) \right] \, dt \, .
\]
} 
Hence, it is difficult to quantify the estimation error.

A host of improved statistical estimators for stationary distributions $\pi$ of continuous-time chains have been proposed in the literature: $\pi$ and averages thereof can be estimated using the embedded discrete-time chain~\cite{Hordijk1976}, the regenerative structure of the chain~\cite{Glynn2006}, importance sampling~\cite{Goyal1987,Glynn1989,Goyal1992,Heidelberger1995}, look-ahead estimators~\cite{Henderson2001,Stachurski2008,Stachurski2012}, and splitting methods~\cite{Salis2005,Valeriani2007,Warmflash2007,Bhatt2010}, see \cite[Chap.~IV]{Asmussen2007} for an introduction to these techniques. However, due to the asymptotic nature of the problem, these approaches all yield biased estimates of $\pi$ with difficult-to-quantify errors. One notable exception are perfect sampling estimators~\cite{Asmussen1992,Propp1996,Kendall2000,hemberg2007,Hemberg2008} that are not only unbiased but yield independent samples drawn from $\pi$. Unfortunately, these are applicable only in special cases.
\section{Convergence and errors of truncation-based approximation schemes}\label{sec:truncs}
{In this section, we consider the problem of computing a given stationary distribution $\pi$. As mentioned in Section~\ref{sec:intro}, analytical formulas for $\pi$ are only available in a few special cases. Furthermore, if the state space is infinite (or just large), the stationary equations~\eqref{eq:stat} cannot be solved directly. 
For these reasons, we have to approximate $\pi$ numerically in most cases of interest.} Truncation-based schemes use a finite subset, or \emph{truncation}, $\s_r$ of the state space and the \emph{truncated rate matrix} $(q(x,y))_{x,y\in\s_r}$ to compute an approximation $(\pi_r(x))_{x\in\s_r}$ of $\pi$'s restriction~$(\pi(x))_{x\in\s_r}$ to $\s_r$. This approximation is then padded with zeros,
\begin{equation}\label{eq:padding}\pi_r(x):=\left\{\begin{array}{ll}\pi_r(x)&\text{if }x\in\s_r\\ 0&\text{if }x\not\in\s_r\end{array}\right.\quad\forall x\in\s,\end{equation}
and used as an approximation $\pi_r=(\pi_r(x))_{x\in\s}$ of the entire stationary distribution.
\subsection{Convergence}\label{sec:convergence}

It is typically the case with these schemes that adding states to the truncation employed improves the quality of the approximation produced. In exchange, larger truncations lead to greater computational costs. To formalise these ideas, we view the schemes as procedures that return an entire sequence $\pi_1,\pi_2,\dots$ of approximations corresponding to a sequence of increasing truncations
$\s_1\subseteq\s_2\subseteq\dots,$
instead of a single approximation corresponding to a single truncation. 

A sanity check for the correctness of these methods is establishing their \emph{convergence}: their ability to produce arbitrarily accurate approximations given enough computational power. That is, showing that, if the limit of the truncations is the entire state space,
\begin{equation}\label{eq:trunc1}\lim_{r\to\infty}\s_r=\bigcup_{r=1}^\infty\s_r=\s,\end{equation}
then the sequence $(\pi_r)_{r\in\zp}$ converges, in one sense or another, to the stationary distribution of interest $\pi$. To formalise the various notions of convergence, we regard both the approximations and the stationary distributions as points in the space 
\begin{equation}\label{eq:l1}\ell^1:=\left\{(\rho(x))_{x\in\s}:\sum_{x\in\s}\mmag{\rho(x)}<\infty\right\}\end{equation}
of absolutely summable real sequences indexed by states in $\s$ and tacitly use
$$\rho(A):=\sum_{x\in A}\rho(x)\quad\forall \rho\in\ell^1$$
to identify $\ell^1$ with the space of finite signed measures on $(\s,\{A:A\subseteq\s\})$.

The weakest type of convergence we consider is pointwise convergence:
\begin{equation}\label{eq:trunc2}
\lim_{r\to\infty}\pi_r(x)=\pi(x)\quad \forall x\in\s, 
\end{equation}
which ensures that, $\pi_r(A)$ is close to the probability $\pi(A)$ that $\pi$ awards to any given finite set $A\subseteq\s$ for sufficiently large $r$. 

To study the  convergence of $\pi_r(A)$ uniformly over all $A\subseteq\s$, we use the total variation and $\ell^1$-distances:
\begin{equation}\label{eq:tvl1def}\norm{\pi_r-\pi}_{TV}:=\sup_{A\subseteq\s}\mmag{\pi_r(A)-\pi(A)},\qquad \norm{\pi_r-\pi}_{1}:=\sum_{x\in\s}\mmag{\pi_r(x)-\pi(x)}.\end{equation}
The total variation distance measures the maximum error in the probability that $\pi_r$ assigns to any event $A$, while the $\ell^1$-distance measures the total absolute error.
We say that $\pi_r$ converges in total variation (or in $\ell^1$) if the above distances tend to zero as $r$ approaches infinity. 

Even though convergence in total variation shows that $\pi_r(A)$ converges to $\pi(A)$ for any event $A$, it gives us no information whether $\pi_r(f)$ is an accurate approximation of the average $\pi(f)$ if $f$ is an unbounded real-valued function on $\s$. Here, instead we use the $w$-norm:
\begin{equation}\label{eq:wnormdef}\wnorm{\rho}:=\sum_{x\in\s}w(x)\mmag{\rho(x)},\end{equation}
where $w$ is a given positive function on $\s$. 
The approximations $\pi_r$ converge to $\pi$ in $w$-norm if and only if\footnote{If $w:=1$, then this is the well-known Schur property of the space $\ell^1$. For general positive $w$, note that $f$ has finite $w$-norm if and only if $wf$ has finite supremum norm and that $\pi_r$ converges to $\pi$ in $w$-norm if and only if $\tilde{\pi}_r$ converges to $\tilde{\pi}$ in $\ell^1$, where $\tilde{\pi}_r(x):=w(x)\pi_r(x)$ and $\tilde{\pi}(x):=w(x)\pi(x)$ for all $x$ in $\s$.} the approximate averages $\pi_r(f)$ converge to $\pi(f)$ for every function $f$ that grows no faster than $w$ times a constant:
$$\sup_{x\in\s}\frac{\mmag{f(x)}}{w(x)}<\infty.$$

For some schemes, convergence in $w$-norm for unbounded $w$ proves too stringent of a requirement and we instead use a slightly weaker notion. We say that $\pi_r$ converges {\emph{$w$-weakly*}} to $\pi$  if the averages $\pi_r(f)$ converge to $\pi(f)$ for each $f$ that asymptotically grows slower than $w$ times any positive constant:
\begin{equation}\label{eq:weakstar2}\lim_{r\to\infty}\sup_{x\not\in\s_r}\frac{\mmag{f(x)}}{w(x)}=0,\end{equation}
where $\s_1\subseteq\s_2\subseteq\dots$ are any increasing truncations approaching the state space~\eqref{eq:trunc1}.

These notions of convergence form a hierarchy  \cite[Chap.~5]{Kuntzthe}: if $w$ is positive and norm-like, then convergence 
\begin{equation}\label{eq:hierarchy}\text{in $w$-norm}\Rightarrow {w\text{-weakly*}}\Rightarrow\text{in $\ell^1$}\Leftrightarrow\text{in total variation}\Rightarrow\text{pointwise},\end{equation}
for any sequence of points in the space $\ell^1$. If the approximations $\pi_r$ are probability distributions (i.e.\  non-negative with mass one), pointwise convergence implies convergence in total variation if and only if the limit $\pi$ is also a probability distribution, which follows from \eqref{eq:hierarchy} and Scheff\'e's Lemma \cite[5.10]{Williams1991}.  The norms themselves are related as follows: 
\begin{equation}\label{eq:normhier}
\text{if } w \geq 1, \quad \text{then} \quad 
\frac{1}{2}\norm{\rho}_{1}\leq\norm{\rho}_{TV}\leq \norm{\rho}_{1}\leq \norm{\rho}_w\quad\forall \rho\in\ell^1\end{equation}
{with $\norm{\rho}_1/2=\norm{\rho}_{TV}$} if and only if $\rho$ is the difference between two probability distributions, and {$\norm{\rho}_1=\norm{\rho}_{TV}$} if and only if $\rho$ is an unsigned measure (i.e.\ $\rho(x)\geq0$ for all $x$ in $\s$).

\subsection{Approximation error}\label{sec:errors} 

Even though the convergence of a scheme is a reassuring indication that we are on the right track, we are still faced with the question  of how much computational power we need to obtain a good approximation. To answer this question, we must compute, or at least bound, the approximation error of the scheme, measured in terms of one of the norms introduced in Section~\ref{sec:convergence} (if we are approximating a specific average or marginal instead of the entire distribution, we use slightly different error measures, see Section~\ref{sec:iter}). Truncation-based approximations have two sources of error: 
\begin{equation}\label{eq:errordecomptrun}\underbrace{\norm{\pi_r-\pi}{_w}}_{\text{approximation error}}=\underbrace{||\pi_r-\pi_{|r}||{_w}}_{\text{scheme-specific error}}+\underbrace{||\pi-\pi_{|r}||{_w}}_{\text{truncation error}}.\end{equation}
{Here, $\norm{\cdot}_w$ denotes any $w$-norm~\eqref{eq:wnormdef} (including the $\ell^1$-norm obtained by setting $w:=1$), and $\pi_{|r}$ denotes the (zero-padded) restriction of $\pi$ to $\s_r$:}
\begin{equation}
\label{eq:rest}
\pi_{|r}(x):=\left\{\begin{array}{ll}\pi(x)&\text{if }x\in\s_r\\ 0&\text{if }x\not\in\s_r\end{array}\right.\qquad\forall x\in\s.
\end{equation}
The \emph{truncation error} in \eqref{eq:errordecomptrun} accounts for the approximation's (total) failure to describe the stationary distribution outside of the truncation: regardless of the details of the scheme, the approximation is zero everywhere outside of the truncation. The \emph{scheme-specific error}  accounts for the errors introduced by the scheme within the truncation. Because the total variation norm is defined in terms a supremum instead of a sum, we are unable to neatly decompose the approximation error in terms of the truncation and scheme-specific errors as in \eqref{eq:errordecomptrun}. However, it is straightforward to bound the former in terms of the latter:
\begin{equation}\label{eq:errordecomptrun2} \max\{||\pi_r-\pi_{|r}||_{TV},||\pi-\pi_{|r}||_{TV}\}\leq\norm{\pi_r-\pi}_{TV}\leq||\pi_r-\pi_{|r}||_{TV}+||\pi-\pi_{|r}||_{TV}.\end{equation} 

Because $\pi$ is unknown, we are often unable to compute these errors exactly and instead must settle for bounding them. Before we discuss how to do this, we point out a simple but insightful consequence of \eqref{eq:errordecomptrun} and \eqref{eq:errordecomptrun2}: the full approximation error is no smaller than the truncation error which is independent of the approximation $\pi_{r}$. Because there exists at least one truncation-based approximation that achieves this error (namely the restriction $\pi_{|r}$) the truncation error is the smallest possible, or \emph{optimal}, approximation error.%
\subsection{Truncation error}\label{sec:truncerr} In either the $\ell^1$ or total variation cases, the truncation error is simply the \emph{tail mass} $m_r$ (i.e.\  the probability that the stationary distribution awards to the outside of the truncation):
\begin{equation}\label{eq:tailmass}||\pi-\pi_{|r}||_{TV}=||\pi-\pi_{|r}||_{1}=\sum_{x\not\in\s_r}\pi(x)=:m_r,\end{equation}
and we use the terms `truncation error' and `tail mass' interchangeably throughout this review. For this reason, the choice of truncation is critical for the computation of accurate approximations. Moreover, {in our experience (e.g.\ see Section~\ref{togglesec})}, truncations with small tail masses typically also result in small scheme-specific errors  {for the schemes studied in Section~\ref{fiveschemes}}.

Choosing a truncation with a \emph{verifiably} small tail mass is a difficult task. 
We know of two ways to systematically generate truncations accompanied by bounds on their tail masses: 
using Foster-Lyapunov criteria \cite{Dayar2011,Dayar2011a,Spieler2014} or using moment bounds \cite{Kuntz2017,Kuntzthe,Kuntz2018a}. For the sake of simplicity, we focus here on the latter and leave the former to Remark \ref{dsapproach} below. Suppose that we have at our disposal a \emph{moment bound} meaning a norm-like (in the sense of \eqref{eq:sublevel}) function $w$ and constant $c$ such that
\begin{equation}
\label{eq:mombound}\pi(w)=\sum_{x\in\s}w(x)\pi(x)\leq c.
\end{equation}
The name `moment bound' stems from $\pi(w)$ frequently being  a moment, or a linear combination of moments, as $w$ is often chosen to be a polynomial.
This type of bound can be obtained using a Foster-Lyapunov criterion such as Theorem \ref{lyabia} (see also \cite{Glynn2008} and references therein)  or mathematical programming methods that have recently drawn much attention \cite{Schwerer1996,Kuntz2017,Kuntzthe,Sakurai2017,Dowdy2018,Dowdy2017,Ghusinga2017a,Gupta2014,Argeitis2014,Dayar2011,Spieler2014}. Setting $\s_r$ to be the $r^{th}$-sublevel set of $w$ (defined in \eqref{eq:sublevel}), we find that
\begin{equation}\label{eq:tailbound}m_r=\sum_{x\not\in\s_r}\pi(x)=\sum_{x\not\in\s_r}\frac{w(x)}{w(x)}\pi(x)\leq \frac{1}{r}\sum_{x\not\in\s_r}w(x)\pi(x)\leq \frac{c}{r}.\end{equation}
In other words, we obtain a \emph{computable bound} on the truncation error (measured using the total variation or $\ell^1$ distances), which we refer to as a \emph{tail bound}. 
\begin{remark}[Foster-Lyapunov tail bounds]\label{dsapproach}Instead of {exploiting} a moment bound to obtain truncations with computable tail bounds, Dayar, Spieler et al. \cite{Dayar2011,Spieler2014} proposed using the Foster-Lyapunov criterion in Theorem~\ref{lyabia}. They suggested finding a non-negative function $u$ such that $-Qu$ is norm-like (in the sense of \eqref{eq:sublevel}) and defining the truncation as
\begin{equation}\label{eq:suplevel}\s_r:=\left\{x\in\s:Qu(x)>-r\left(\max_{x\in\s}Qu(x)\right)\right\}.\end{equation}
It is not difficult to then show that \eqref{eq:lyabia} is satisfied with $d:=(r+1)/r$, $F:=\s_r$, 
$$v(x):=\frac{u(x)}{r\max_{x\in\s}Qu(x)},\quad
f(x):=\left\{\begin{array}{ll}1&\text{if }x\in\s_r \\
-\dfrac{Qu(x)}{r\max_{x\in\s}Qu(x)} &\text{if }x\not\in\s_r\end{array}\right..$$
In particular, the bound \eqref{eq:lyabound} now reads $m_r\leq1/(r+1)$. 

Because we are only able to influence $Qu$ indirectly via our choice of $u$, the function $Qu$  can be quite complicated and it can be difficult to deduce what its superlevel sets \eqref{eq:suplevel} are. In these cases, it is often possible to replace $\s_r$ with a more manageable outer approximation thereof \cite{Dayar2011}.
\end{remark}
\subsection{Scheme-specific error}\label{sec:schemespec}
Obtaining accurate approximations of $\pi$ within the truncation (i.e.\  ones with small scheme-specific errors) is as important as choosing a good truncation. Computing or bounding the scheme-specific error is a challenging problem for most truncation-based schemes. Notable exceptions are those \cite{Dayar2011,Spieler2014,Kuntzthe,Kuntz2017,Kuntz2018a} that produce collections $l_r=(l_r(x))_{x\in\s_r}$ and $u_r=(u_r(x))_{x\in\s_r}$  of lower and upper bounds on  $(\pi(x))_{x\in\s_r}$:
$$l_r(x)\leq \pi(x)\leq u_r(x)\qquad\forall x\in\s_r.$$
Padding these bounds with zeros (as in \eqref{eq:padding}), we find expressions for the scheme-specific error in terms of the truncation error, $m_r$~in~\eqref{eq:tailmass}, and the masses of $l_r,u_r$:
{
\begin{align}||l_r-\pi_{|r}||_{TV}&=||l_r-\pi_{|r}||_1=\sum_{x\in\s_r}(\pi(x)-l_r(x))=1-m_r-l_r(\s_r),\nonumber\\
||u_r-\pi_{|r}||_{TV}&=||u_r-\pi_{|r}||_1=\sum_{x\in\s_r}(u_r(x)-\pi(x))=u_r(\s_r)-1+m_r.\label{eq:nf78eahf8aehefa}\end{align}}
Using \eqref{eq:tailmass} and the above, we obtain expressions for the full approximation error:
\begin{align}&\norm{l_r-\pi}_{TV}=\norm{l_r-\pi}_{1}=1-l_r(\s_r),\label{eq:trunerrexp2}\\
\label{eq:trunerrexp}&\norm{u_r-\pi}_{TV}=\max\{u_r(\s_r)-1+m_r,m_r\},\enskip\norm{u_r-\pi}_{1}=u_r(\s_r)-1+2m_r,\end{align}
see \cite[Corollary~20(i)]{Kuntz2017}. Computing the error of $l_r$ entails adding up its entries. In the case of $u_r$, matters are not so simple as \eqref{eq:trunerrexp} involves the truncation error, a quantity typically unknown. However, we can easily calculate a lower bound on $u_r$'s error and, assuming that a tail bound of the type in \eqref{eq:tailbound} is available, an upper bound too:
\begin{align}u_r(\s_r)-1&\leq \norm{u_r-\pi}_{TV}\leq\max\left\{u_r(\s_r)-1+\frac{c}{r},\frac{c}{r}\right\}\label{eq:uppererbound1},\\
u_r(\s_r)-1&\leq\norm{u_r-\pi}_{1}\leq u_r(\s_r)-1+\frac{2c}{r}.\label{eq:uppererbound2}\end{align}

In summary, if error guarantees are important, a straightforward way to obtain them is to employ schemes that yield bounds{, for instance those discussed in Sections~\ref{sec:iter} and \ref{lpappiter}}. However, these schemes do not necessarily achieve smaller errors than those that do not produce bounds (see Section~\ref{togglesec}). 
\subsection{Optimal approximating distributions and the censored chain} \label{sec:opt}

Some truncation-based schemes (e.g.\ those in Sections~\ref{sec:bd}--\ref{sec:taa}) yield approximations $\pi_r$ that are probability distributions and we say that they produce \emph{approximating distributions}. {As shown in~\cite{Zhao1996} (see also Appendix~\ref{app:trunminerr}), in these cases we have  that the total variation error is given by
\begin{align}\norm{\pi_r-\pi}_{TV}&=m_r+\sum_{x\in\s_r^-}(\pi(x)-\pi_r(x))\geq m_r,\label{eq:trunminerr}\end{align}
where $m_r$ denotes the tail mass~\eqref{eq:tailmass} and $\s_r^-:=\{x\in\s_r:\pi_r(x)<\pi(x)\}$ is the collection of states {within $\s_r$} whose probability $\pi_r$ underestimates.} For this reason, the approximating distributions that achieve the smallest possible total variation error $m_r$ are those that bound $\pi$ from above  (i.e.\  such that $\s_r^-=\emptyset$), {in which case, \eqref{eq:tailmass} and \eqref{eq:nf78eahf8aehefa} imply that both the total variation scheme-specific and truncation errors equal $m_r$}. In general, there are infinitely many such distributions. However, assuming that the state space is irreducible, the (zero-padded) conditional distribution, 
\begin{equation}\label{eq:picond}\pi(x|\s_r):=\left\{\begin{array}{ll}\dfrac{\pi(x)}{\pi(\s_r)}&\forall x\in\s_r\\0&\forall x\not\in\s_r\end{array}\right.,\end{equation}
is the only approximating distribution $\pi_r$ that minimises the maximum relative error:
\begin{equation}\label{eq:picond2}\max_{x\in\s_r}\frac{\mmag{\pi_r(x)-\pi(x)}}{\pi_r(x)}.\end{equation}
In this case, the maximum relative error is also the tail mass $m_r$~\eqref{eq:tailmass}, see Appendix~\ref{app:opt}, where we include a proof of these facts {(we have been unable to locate such a proof elsewhere)}. 
Note that the conditional distribution also minimises the total variation error \eqref{eq:trunminerr} as the definition \eqref{eq:picond} implies that $\pi(\cdot|\s_r)$ bounds $\pi$ from above. For these reasons, we say that the conditional distribution $\pi(\cdot|\s_r)$ is \emph{optimal among approximating distributions}. {Unfortunately, evaluating $\pi(\cdot|\s_r)$ directly requires obtaining the stationary distribution or an unnormalised version thereof. An alternative approach to its computation relies on the \emph{censored chain}.}

\juansec{The censored chain}

Before proceeding to the actual schemes, we take a moment here to consider an old question that proves insightful for our approximation problem: given a $\varphi$-irreducible chain $X$ with unique stationary distribution $\pi$, how do we construct a second chain that behaves roughly as $X$ does (in the long-run and otherwise) except that it never leaves the truncation $\s_r$?  
For the reasons given above, we would do well in using an approximating chain $X^{\varepsilon_r}$  whose stationary distribution is the (unpadded) conditional distribution $(\pi(x|\s_r))_{x\in\s_r}$ of $X$.
Such a chain 
can be constructed using a rate matrix $Q^{\varepsilon_r}$ known as the \emph{stochastic complement} of $Q$ \cite{Meyer1989}, which is defined in terms of {the \emph{out-rate} $q_o(x)$ and \emph{out-boundary} $\cal{B}_o(\s_r)$:
\begin{equation}\label{eq:qo}
q_o(x):=\sum_{y\not\in\s_r}q(x,y)\quad\forall x\in\s_r,\quad \cal{B}_o(\s_r):=\{x\in\cal{S}_r:q_o(x)>0\},\end{equation}
(i.e.\ the rate at which $X$ jumps out of the truncation from $x$, and the set of states inside $\s_r$ from which the chain may jump out of $\s_r$, respectively), and the \emph{conditional re-entry matrix} $\varepsilon_r:=(\varepsilon_r(x,y))_{x,y\in\s_r}$:
\begin{equation}\label{eq:exre0}
\varepsilon_r(x,y):=
\begin{cases}
\Pb_\lambda \left(\{\text{$X$ first re-enters $\s_r$ via $y\}$}|\text{$\{X$ first left $\s_r$ via $x$}\} \right) & \text{if }x\in\cal{B}_o(\s_r),\\
0 & \text{if }x\not\in\cal{B}_o(\s_r),
\end{cases}
\end{equation}
for all $x,y\in\s_r$. Here $\lambda$ denotes any initial distribution for which the event $\{X$ first left $\s_r$ via $x\}$ 
that $x$ was the last state $X$ visited before leaving the truncation for the first time 
has non-zero probability, thus ensuring that the conditional probability is well-defined.
If $q_o(x)=0$,  no such distribution exists, as the chain cannot jump out of the truncation from $x$, and we set $\varepsilon_r(x,y)=0$. Otherwise, any distribution $\lambda$ with support on at least one state from which the chain can reach $x$ (e.g.\ $\lambda=1_x$) fits the bill and the strong Markov property implies that the conditional probability is independent of the particular $\lambda$ we use. 
The stochastic complement of $Q$ is then defined as
\begin{equation}
\label{eq:truncandaug}
Q^{\varepsilon_r}=(q^{\varepsilon_r}(x,y))_{x,y\in\s_r} \text{  with  }
q^{\varepsilon_r}(x,y):=q(x,y)+q_o(x) \, \varepsilon_r(x,y) \quad \forall x,y\in\s_r.\end{equation}}

The associated $X^{\varepsilon_r}$ is known as the \emph{censored} or \emph{restricted} chain \cite{Williams1966,Freedman1983a,Zhao1996} and can be viewed as the optimal approximating chain with state space $\s_r$. Given our assumption that $X$ is $\varphi$-irreducible with a unique stationary distribution $\pi$,  the censored chain is $\varphi$-irreducible with its unique stationary distribution being the (unpadded) conditional distribution $(\pi(x|\s_r))_{x\in\s_r}$.
This can be seen as follows. The censored chain behaves identically to $X$ while both remain inside of the truncation. However, the instant $\tau$ that $X$ jumps from a state $x$ inside $\s_r$ to a state outside, the censored chain instead jumps to a state sampled from $\varepsilon_r(x,\cdot)$. The Markov property and \eqref{eq:exre0} imply that $X^{\varepsilon_r}_\tau$ has the same distribution as the original chain $X$ does at the moment it re-enters the truncation. Because this process repeats itself in perpetuity, the ensemble of sample paths of $X^{\varepsilon_r}$ is statistically identical to that obtained by erasing the segments of the paths of $X$ lying outside of $\s_r$ and gluing together the ends of the remaining segments (see Appendix~\ref{app:censored} for more details).  That the censored chain is $\varphi$-irreducible with $(\pi(x|\s_r))_{x\in\s_r}$ as its unique stationary distribution then follows from \eqref{eq:timeaverages}. 
{Since $\s_r$ is finite, were $\varepsilon_r$ to be known, then we could compute  the conditional distribution by solving stationary equations for $X^{\varepsilon_r}$, i.e.\ \eqref{eq:stat} with $Q^{\varepsilon_r}$ and $\s_r$ replacing $Q$ and $\s$. Unfortunately, expressions for $\varepsilon_r$ are rarely available in practice---exceptions include the birth-death processes in Section~\ref{sec:bd} and generalisations thereof known as \emph{downward skip-free processes}~\cite[p.270--272]{Ramaswami1996}.}

\section{A review of truncation-based schemes}\label{fiveschemes}

In this section, we review several truncation-based schemes employed in the literature to approximate the stationary distributions of SRNs \eqref{eq:network}.
Before introducing the schemes, we consider in Section~\ref{sec:bd} the approximation problem for birth-death processes (BDPs, Figure~\ref{fig:cartoon}(a)). 
In this case, it is straightforward to compute the conditional distribution \eqref{eq:picond} and, consequently, to obtain approximations of the stationary distribution $\pi$ with appealing properties. These approximations: 
$(i)$ bound $\pi$; 
$(ii)$ converge to $\pi$ as the truncation approaches the entire state space; 
$(iii)$ are accompanied by practical error bounds; 
and $(iv)$ are cheap to compute. 
For chains whose conditional distributions cannot be computed, more sophisticated approximation methods are necessary.  We study five such methods, each of which retains some, but not all, of the aforementioned properties as summarised in Table~\ref{tab:1}. The schemes are pictorially described in Figure~\ref{fig:cartoon}(b--f).

The first of these schemes (Section~\ref{sec:qbd}), is tailored to the multi-dimensional generalisation of birth-death processes: so-called \emph{level-dependent quasi-birth-death processes} (\emph{LDQBDPs}) whose state space decomposes into a union $\cup_{l=0}^\infty\cal{L}_l$ of disjoint sets known as \emph{levels}. Each level is accessible in a single jump  from only those adjacent to it. 
In practice, the scheme consists of setting the truncation $\s_r$ to be the first $L_r$ levels and inverting a $|\L_l|\times|\L_l|$ matrix per level $\L_l$ included therein {(we  use $\mmag{A}$ to  denote the cardinally of a set $A\subseteq \s$).}

\begin{figure}[h!]
	\begin{center}
	\includegraphics[width=0.9\textwidth]{./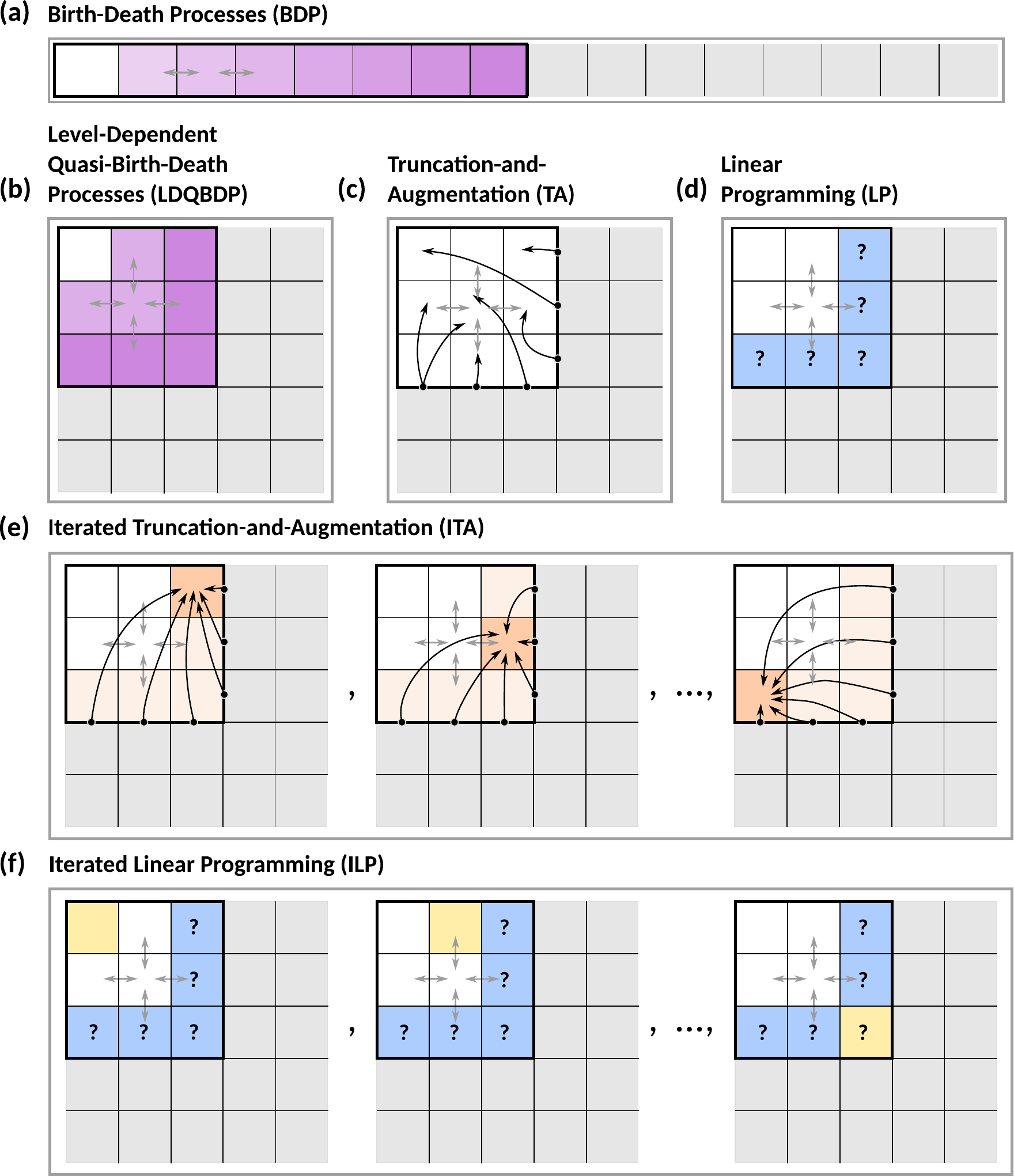}
	\vspace{-5pt}
	\end{center}
\caption{\textbf{Truncation-based schemes at a glance.} \textbf{(a)} The BDP scheme applies to one-dimensional processes that transition only between neighbouring states (grey arrows). In this special case the optimal approximating distribution (see Section \ref{sec:bd}) for the truncation (black box) can be computed exactly. \textbf{(b)} The LDQBDP scheme considers chains that transition between levels (purple shades) and exploits this structure to produce an approximating distribution on the first few levels (black box). 
\textbf{(c)} The TA scheme approximates the stationary distribution with that of an auxiliary chain that mimics the behaviour of the original chain except that whenever it would jump out of the truncation, the auxiliary chain is instead redirected to specified states inside the truncation (black arrows). \textbf{(d)} The LP scheme leaves the states in the in-boundary (blue) as free variables and finds a solution of the stationary equations that do not feature states outside of the truncation.
\textbf{(e)} The ITA scheme optimises over all re-entry states (dark orange) belonging to the in-boundary (light orange) to produce upper and lower bounds on each state in the truncation. \textbf{(f)} The ILP scheme produces upper and lower bounds on the probability of each state  individually (yellow) by optimising over the free variables (blue).}\label{fig:cartoon}
\vspace{-20pt}
\end{figure}

The \emph{truncation-and-augmentation (TA)} scheme of Section~\ref{sec:taa} modifies the chain so that it never exits a given truncation $\s_r$ and uses the finite-dimensional stationary distribution of the modified chain to approximate that of the original chain. Computationally, the scheme entails solving a system of $\mmag{\s_r}$ linear equations in $\mmag{\s_r}$ unknowns.

The \emph{iterated TA} scheme (\emph{ITA}, Section~\ref{sec:iter})   repeatedly applies the TA scheme to obtain upper and lower bounds on the distribution. In practice, this scheme consists of solving $\mmag{\cal{B}_i(\s_r)}$ systems of $\mmag{\s_r}$ linear equations in $\mmag{\s_r}$ unknowns, where $\cal{B}_i(\s_r)$ denotes the truncation's in-boundary (set of states inside the truncation accessible in a single jump from outside).

The \emph{linear programming} scheme (\emph{LP}) in Section~\ref{lpapp} instead constructs tractable approximations of the set of stationary solutions of the CME and optimises over these. The scheme has strong convergence guarantees and is applicable in the non-unique case. Running the scheme consists of solving a linear program with $\mmag{\s_r}$ decision variables and a comparable number of constraints.

The iterated variant of the LP scheme, the \emph{ILP} scheme (Section~\ref{lpappiter}), produces bounds on the distributions and doubles up as a uniqueness test. It consists of solving multiple linear programs with $\mmag{\s_r}$ decision variables and a comparable number of constraints. Specifically, to approximate the entire distribution $\mmag{\s_r}$ linear programs are required, for a marginal distribution the number of programs equals the number of marginal states in the truncation, and for a single average it equals one.

\begin{minipage}{1.3\textwidth} 
\captionof{table}{Summary of the truncation-based schemes covered in this review.} \label{tab:1} 
{\small\sffamily
\begin{tabularx}{\linewidth}{|L | L L L L L L|}\hline
Scheme& BDP (Section~\ref{sec:bd}) & LDQBDP (Section~\ref{sec:qbd})&TA (Section~\ref{sec:taa})& ITA (Section~\ref{sec:iter})& LP (Section~\ref{lpapp})& ILP (Section~\ref{lpappiter})\\\hline
\rowcolor{lightgray}
Approximation type&Bounds&Approximating distribution&Approximating distribution&Bounds&Approximation&Bounds\\\hline
Convergence guarantee&\cmark, in total variation&\cmark, in total variation&$^{(1)}$&$^{(2)}$&\cmark, {$w$-weakly*$^{(3)}$}&\cmark$^{(4)}$, {$w$-weakly*$^{(3)}$}\\\hline\rowcolor{lightgray}
Computable error bound&\cmark$^{(5)}$&\xmark&\cmark$^{(6)}$&\cmark&\xmark&\cmark\\\hline
Uniqueness required?&\cmark&\cmark& \cmark&\cmark&\xmark&\xmark\\\hline\rowcolor{lightgray}
Other requirements&BDP&LDQBDP&None&Tail bound&Moment bound&Moment bound\\\hline
Computational cost$^{(7)}$&Trivial&Low&Low to medium&High&Medium&Medium to high$^{(8)}$\\\hline\rowcolor{lightgray}
Type of computation&Recursion&Linear algebra&Linear algebra&Linear algebra&Linear programming&Linear programming\\\hline
\end{tabularx}}
\smallskip

\small
$^{(1)}$Only known to converge in total variation for irreducible exponentially ergodic chains under certain conditions on the  re-entry matrix. Counterexamples for which the scheme does not converge are known (see Section~\ref{sec:taa}). $^{(2)}$Only the upper bounds are known to converge (pointwise) under the same conditions as the TA scheme. No counterexamples are known (see Section~\ref{sec:iter}). {$^{(3)}$Where $w$ is the function featuring in the moment bound~\eqref{eq:mombound}.}  $^{(4)}$Guaranteed convergence if the stationary distribution is unique. $^{(5)}$Requires a tail bound (see Section~\ref{sec:bd}). $^{(6)}$Requires a Lyapunov function (see Section~\ref{sec:taa}). $^{(7)}$Based on our practical experience using non-optimised MATLAB-based implementations of each of the methods, see Section~\ref{togglesec} for details. $^{(8)}$The cost of the ILP scheme depends on what is approximated: the entire distribution (high), a marginal (medium to high), or just an average (medium).
\bigskip
\end{minipage}

\subsection{Approximations for birth-death processes}\label{sec:bd}

To illustrate the basic properties of truncation-based schemes, we consider a birth-death process of the form 
\begin{equation}
\label{eq:one_step}
\varnothing \underset{a_-}{\overset{a_+}{\rightleftarrows}}S.
\end{equation}
This simple SRN~\eqref{eq:network} has rate matrix \eqref{eq:qmatrixsrn} and state space $\s=\n$. Its state increases by one with \emph{birth rate} $a_+$ and decreases by one with \emph{death rate} $a_-$. The stationary equations \eqref{eq:stat}  read
\begin{align}
&a_-(1)\pi(1)-a_+(0)\pi(0)=0\label{eq:jf987wahaf3wi1}\\
&a_-(x+1)\pi(x+1)-(a_+(x)+a_-(x))\pi(x)+a_+(x-1)\pi(x-1)=0\enskip\forall x\in\zp.\label{eq:jf987wahaf3wi2}
\end{align}
Assuming as we do throughout that $a_-(x)>0$ for all $x$ in $\zp$, a sequence $\pi$ in $\ell^1$ satisfies these equations if and only if 
\begin{equation}
\label{eq:1deg}
\pi(x) =  \pi(0)\left [ \frac{a_+(0)}{a_-(1)}\frac{a_+(1)}{a_-(2)}\dots\frac{a_+(x-1)}{a_-(x)} \right ] =:   \pi(0)\gamma(x),  \quad \forall x \in\zp,
\end{equation}
see \cite[Chapter~7.1]{Gardiner2009}. Such a sequence is a probability distribution if and only if it satisfies the normalising condition
\begin{equation}
 \label{eq:normalising}
\pi(0) = \frac{1}{\sum_{x=0}^\infty \gamma(x)}=:\frac{1}{\gamma(\s)},\end{equation}
where $\gamma(0):=1$. In this case, Theorem~\ref{Qstateq} and Corollary~\ref{iredcor} show that $\pi$ is the unique stationary distribution of the chain, as long as the rate matrix is regular.

In most cases, no closed-form expression is known for the normalising constant $\gamma(\s)$ and, consequently, it is not possible to compute $\pi$ exactly. Instead, let $\s_r$ denote the truncation of the state space $\n$ consisting of the first $r$ states:
$$\s_r:=\{0,1,\dots,r-1\}\quad\forall r\in\zp.$$ 
Dividing both sides of \eqref{eq:1deg} by $\pi(\s_r)$, we find that the (zero-padded) conditional distribution $\pi(\cdot|\s_r)$ (defined in~\eqref{eq:picond}) satisfies
\begin{equation}\label{eq:onestepcond}\pi(x|\s_r)=\gamma(x)\pi(0|\s_r)\quad \forall x\in\s_r.\end{equation}
Combining the above with the normalising condition $\pi(\s_r|\s_r)=1$ yields 
\begin{equation}\label{eq:normalisingcond}\pi(0|\s_r)=\frac{1}{\sum_{x=0}^{r-1} \gamma(x)}=:\frac{1}{\gamma(\s_r)}.\end{equation} 
Note that $\pi(\cdot|\s_r)$ is easy to compute because $\gamma(\s_r)$ is a finite sum.

The definition of the conditional distribution in \eqref{eq:picond} implies that it bounds $\pi$ from above in $\s_r$,
\begin{equation}\label{eq:fenaw8fney8af}\pi(x|\s_r)\geq\pi(x)\quad\forall x\in\s_r,\end{equation}
and we denote it by $u_r$ in what follows. For the reasons given in Section~\ref{sec:opt}, the conditional distribution is optimal among approximating distributions and its maximum relative and total variation errors both equal the truncation error $m_r$:
$$\norm{{u_r}-\pi}_{TV}=\max_{x\in\s_r}\frac{\mmag{u_r(x)-\pi(x)}}{u_r(x)}=\sum_{x\in{\s_r}}\pi(x)=m_r.$$
If an upper bound $c$ on the mean of $\pi$ is known, Markov's inequality yields a practical bound on  the tail mass ($m_r\leq c/r$) and, consequently, one approximation error too:
\begin{equation}\label{eq:faw97e7wea}\norm{u_r-\pi}_{TV}=\max_{x\in\s_r}\frac{\mmag{u_r(x)-\pi(x)}}{u_r(x)}\leq\frac{c}{r}.\end{equation}
Armed with the tail bound, we also easily obtain lower bounds on $\pi$:
$$l_r(x):=\alpha_r u_r(x)=\alpha_r\frac{\pi(x)}{\pi(\s_r)}=\frac{\alpha_r}{\pi(\s_r)}\pi(x)\leq \pi(x)\quad\forall x\in\s_r,\enskip r>c,$$
where $\alpha_r:=(1-\frac{c}{r})\geq 1-m_r=\pi(\s_r)$. Because
{\begin{align*}
\frac{\mmag{l_r(x)-\pi(x)}}{l_r(x)}&=\frac{\mmag{(\alpha_r/\pi(\s_r))\pi(x)-\pi(x)}}{(\alpha_r/\pi(\s_r))\pi(x)}=\mmag{1-\frac{\pi(\s_r)}{\alpha_r}}=\frac{\pi(\s_r)-\alpha_r}{\alpha_r}\leq\frac{c}{r\alpha_r}
\end{align*}
for all $x$ in $\s_r$ and $r>c$,} 
the maximum relative error of the (zero-padded) lower bounds $l_r=(l_r(x))_{x\in\s}$ is bounded by $c/(r\alpha_r)$, while \eqref{eq:trunerrexp2} tells us that the total variation error is the tail bound:
\begin{equation}\label{eq:faw97e7wea2}\norm{l_r-\pi}_{TV}= {1-l_r(\s_r)=1-\alpha_ru_r(\s_r)=}\frac{c}{r}.\end{equation}
Taking the limit $r\to\infty$ in \eqref{eq:faw97e7wea}--\eqref{eq:faw97e7wea2} shows that both the upper bounds $u_r$ and lower bounds $l_r$ converge to $\pi$ in total variation as the truncation approaches the entire state space.

{The reason why the birth-death case is straightforward is that we are able to compute the conditional distribution $\pi(\cdot|\s_r)$.  
Indeed, notice that the analysis starting at \eqref{eq:fenaw8fney8af} and ending underneath \eqref{eq:faw97e7wea2} holds identically for any chain (birth-death or otherwise) with stationary distribution $\pi$, truncation $\s_r$, conditional distribution $u_r(\cdot)=\pi(\cdot|\s_r)$, and tail bound $m_r\leq c/r$. 
In general, obtaining the conditional distribution is non-trivial:
while it is possible to compute this distribution for certain other chains, e.g.\ those whose stationary distribution is known up to a normalising constant (Section~\ref{sec:intro}) or those with known conditional re-entry matrix (Section~\ref{sec:opt}), these are exceptional cases. For most chains the approximation problem proves challenging.}

\subsection{Approximations for level-dependent quasi-birth-death processes}\label{sec:qbd}

Specialised schemes for \emph{level-dependent quasi-birth-death processes} (LDQBDPs) have attracted significant attention (see \cite{Baumann2010,Phung-Duc2010,Bright1995,Dayar2011a,Hanschke1999,Li2004} and references therein). {\emph{Quasi-birth-death processes} (QBDPs) generalise birth-death processes by allowing \emph{block} tridiagonal rate matrices (instead of tridiagonal), and were first considered in~\cite{Evans1967,Wallace1969}.}
In particular, the state space $\s$ of these processes decomposes into a disjoint union $\cup_{l=0}^\infty\L_l$ of finite sets $\L_l$ known as \emph{levels} (states within a level are sometimes referred to as \emph{phases}~\cite{Bright1995}) such that the rate matrix 
\begin{equation}
\label{eq:ldqbd}Q=\begin{pmatrix}
Q^0&Q^0_+&0&0&\dots\\
Q^1_-&Q^1&Q^1_+&0&\dots\\
0&Q^2_-&Q^2&Q^2_+&\dots\\
0&0&Q^3_-&Q^3&\dots\\
\vdots&\vdots&\vdots&\vdots&\ddots
\end{pmatrix},
\end{equation}
where the block $Q^l_-=(q(x,y))_{x\in\L_{l},y\in\L_{l-1}}$ (resp. $Q^l_+=(q(x,y))_{x\in\L_{l},y\in\L_{l+1}}$) describes the transitions from the states in level $\cal{L}_l$ to the states in the level below (resp. above) and $Q^l=(q(x,y))_{x,y\in\L_l}$ describes the transitions between states inside level $\cal{L}_l$. The $0$'s denote matrices of zeros of appropriate sizes.  
{The early literature~\cite{Evans1967,Wallace1969,Neuts1981,Latouche1993,Hajek1982} focused on \emph{level-independent} QBDPs for which blocks $Q^l_-,Q^l,$ and $Q^l_+$ are independent of the level number $l$. LDQBDPs are \emph{level-dependent} QBDPs for which the blocks depend on $l$.}

\juansec{Classes of stochastic reaction networks modelled by LDQBDPs}
As pointed out in \cite{Dayar2011a}, SRNs whose stoichiometric vectors $\nu_1,\dots, \nu_m$ are composed of entirely ones, zeros, and minus ones,
$$\nu_{ij}\in\{-1,0,1\},\quad\forall i=1,\dots,n,\enskip j=1,\dots,m,$$
are LDQBDPs.
Their $l^{th}$ level consist of all count vectors with at least one entry equal to $l$ and no entry greater than $l$:
\begin{equation}\label{eq:masslevelsspieler}\L_l:=\{x\in\s:\max\{x_1,x_2,\dots,x_n\}=l\}\quad\forall l\in\n.\end{equation}
Examples include networks with reactions such as $2S_1\to S_1$, $2S_1+S_2\to 3S_1$, and $S_1+S_2\to \varnothing$ whereas networks with reactions such as $2S_1\to S_2$ or $\emptyset\to 2S_1$ fall outside of this class.

Here, we identify a second class of LDQBDPs: SRNs with well-defined notions of \emph{total mass} and reactions that change this mass by at most one. More concretely, SRNs for which there exists a vector $u=(u_1,\dots,u_n)$ of positive integers such that 
$$\iprod{u}{\nu_j}:=u_1\nu_{1j}+\dots+u_n\nu_{nj}\in\{-1,0,1\}$$
for all stoichiometric vectors $\nu_{j}$~\eqref{eq:qmatrixsrn}, i.e.\ all $j=1,\dots,m$.
We refer to the quantity $u_i$ as the \emph{mass} $S_i$'s molecules and thus to $\iprod{u}{X_t}$ as the \emph{total mass} in the network at time $t$. A reaction \emph{consumes} mass if $\langle u, \nu_j\rangle$ is negative, \emph{produces} mass if it is positive, and \emph{conserves} mass if it is zero. For instance, choosing $u=(1,2)$ for the network
\begin{equation}\label{eq:qbdex}\varnothing\rightleftarrows S_1,\qquad 2S_1\rightleftarrows S_2,\end{equation}
we have that a molecule of $S_2$ has twice as much mass as  a molecule of $S_1$ does and the first reaction produces mass, the second consumes mass, and the third and fourth conserve mass. For these types of networks, the chain is an LDQBDP whose $l^{th}$ level is the set of states with mass $l$:
\begin{equation}\label{eq:masslevels}\L_l:=\{x\in\s:\iprod{u}{x}=l\}\quad\forall l\in\n.\end{equation}
The aforementioned classes overlap, but neither is a subclass of the other: the network
$$\emptyset\xrightarrow{} S_1+S_2,\quad S_1\rightleftarrows S_2,\quad S_1\xrightarrow{}\varnothing,\quad S_2\xrightarrow{}\varnothing,$$
has levels of type \eqref{eq:masslevelsspieler} but not of type \eqref{eq:masslevels}, while the network in \eqref{eq:qbdex} has levels of type \eqref{eq:masslevels} but not of type \eqref{eq:masslevelsspieler}.

More generally, the LDQBDP property can be deduced from the network's stoichiometry. 
Let $f$ be a $\n$-valued norm-like function. If 
\begin{equation}\label{eq:fha78w3a7whrfn3awjf}{f(x+\nu_j)-f(x)}\in\{-1,0,1\}\qquad\forall x\in\s:a_j(x)>0, \enskip \forall j=1,\dots,m,\end{equation}
then the chain is an LDQBDP with levels
$$\L_l:=\{x\in\s:{f(x)}=l\}\quad\forall l\in\n.$$
For instance, we had ${f(x)}=\max\{x_1,\dots,x_n\}$ for the first class of networks above and ${f(x)}=\iprod{u}{x}$ for the second. This condition is not only sufficient but necessary too as setting ${f(x)}:=l$ for all $x$ in $\cal{L}_l$ and $l$ in $\n$ yields an $\n$-valued norm-like function on $\s$ satisfying \eqref{eq:fha78w3a7whrfn3awjf}.

\juansec{Approximating the stationary distribution}
Throughout the remainder of this section, suppose that {the rate  matrix  is regular} and that the chain is irreducible and has a stationary distribution $\pi$. Let $\pi(\cdot|\s_r)$ denote the conditional distribution in \eqref{eq:picond} with respect to the truncation
\begin{equation}\label{eq:qbdtrunc}\s_r:=\bigcup_{l=0}^{L_r-1}\L_l\end{equation}
obtained by discarding all but the first $L_r$ levels.

Similarly to the birth-death case, the conditional distribution can be characterised as follows \cite{Bright1995}. The restriction $\pi_{|l}(\cdot|\s_r):=(\pi(x|\s_r))_{x\in\L_l}$ of the conditional distribution to the $l^{th}$ level $\L_l$ can be expressed in terms of the restriction to the $0^{th}$ level:
\begin{equation}\label{eq:ndua93a3}
\pi_{|l}(x|\s_r)=\sum_{x'\in \L_0}\pi_{|0}(x'|\s_r)\Gamma^l(x',x)\quad\forall x\in\L_l,\enskip l=1,2,\dots,L_r-1,
\end{equation}
where $\Gamma^0$ denotes the identity matrix $(1_x(y))_{x,y\in\L_0}$ on $\L_0$,
$$\Gamma^l:=R^1R^2\dots R^l\quad\forall l\in\zp,$$
and the matrices $R^l=(R^l(x,y))_{x\in\L_{l-1},y\in\L_{l}}$ with dimension $|\L_{l-1}|\times|\L_{l}|$ are the minimal non-negative solutions to the equations
\begin{equation}\label{eq:rmeqs}Q_+^{l-1}+R^lQ^l+R^{l}R^{l+1}Q^{l+1}_-=0\quad\forall l\in\zp.\end{equation}
The restriction $\pi_{|0}(\cdot|\s_r)$ in \eqref{eq:ndua93a3} is the unique solution of the equations
\begin{align}
\sum_{x'\in\L_0} \pi_{|0}(x'|\s_r) \, \left( Q^0(x',x)+R^1Q^1_-(x',x) \right)&=0 \quad\forall x\in\L_0, \label{eq:ndua93a1}\\
\sum_{l=0}^{L_r-1} \sum_{x\in\L_0}\sum_{y\in\L_l}
\pi_{|0} (x|\s_r) \, \Gamma^l(x,y)&=1  \label{eq:ndua93a2}.
\end{align}
{Note that \eqref{eq:ndua93a3} generalises   \eqref{eq:onestepcond} to multiple dimensions; \eqref{eq:rmeqs}--\eqref{eq:ndua93a1} generalise the equations obtained by plugging~\eqref{eq:onestepcond} into \eqref{eq:jf987wahaf3wi1}--\eqref{eq:jf987wahaf3wi2}; and \eqref{eq:ndua93a2} generalises \eqref{eq:normalisingcond}.}
\juansec{The birth-death case} 
In the case of the birth-death process in Section~\ref{sec:bd}, the levels are individual states ($\L_l=\{l\}$) and the entries of the $1 \times 1$ blocks are:
\[ Q^l_-(l,l-1)=a_-(l) \enskip \forall l\in\zp, \enskip 
Q^l(l,l)=-a_-(l)-a_+(l),\enskip Q^l_+(l,l+1)=a_+(l) ,\enskip\forall l\in\n.
\]
Thus, the matrices $R^1,R^2,\dots$ essentially reduce to numbers. {By \eqref{eq:normalisingcond},  $\pi(0|\s_r)=\gamma(\s_r)^{-1}>0$ and \eqref{eq:ndua93a1} reduces to 
$$a_-(1) \, R^1(0)-a_+(0)=0.$$
Combining the above with \eqref{eq:rmeqs}, we find that 
$$R^l(l)=\frac{a_+(l-1)}{a_-(l)}\quad\forall l\in\zp.$$
Consequently, \eqref{eq:ndua93a3} reduces to \eqref{eq:onestepcond}; \eqref{eq:ndua93a2} reduces to \eqref{eq:normalisingcond}; \eqref{eq:rmeqs}--\eqref{eq:ndua93a1} are equivalent to \eqref{eq:jf987wahaf3wi1}--\eqref{eq:jf987wahaf3wi2}}; and we compute the  conditional distribution $\pi(\cdot|\s_r)$ as described in Section~\ref{sec:bd}.
\juansec{The general case and the LDQBDP scheme} 
In contrast with birth-death processes, the size of the levels of multidimensional LDQBDPs typically grows with $l$ (i.e.\ $|\L_0|<|\L_1|<\dots$), rendering the system~\eqref{eq:rmeqs} 
underdetermined, since we have $|\L_{l-1}|\times|\L_{l}|$ equations in $|\L_{l}|\times|\L_{l+1}|$ unknowns. For this reason, we are no longer able to compute $R^{l+1}$ from $R^l$. Moreover, we are unable to solve for $R^1$ since Eqs.~\eqref{eq:ndua93a1} are also underdetermined. 

Given $R^{l+1}$, Eqs.~\eqref{eq:rmeqs} do have a unique solution~\cite{Bright1995} for $R^l$, 
\begin{equation}\label{eq:fenw7a8fhewa87fna21eq}
R^l=-Q^{l-1}_+(Q^l+R^{l+1}Q^{l+1}_-)^{-1}.
\end{equation}
Thus, were $R^{L_r}$ to be known, we could compute $R^{L_r-1},\dots, R^1$ `downwards' using \eqref{eq:fenw7a8fhewa87fna21eq} (or another equivalent equation \cite{Bright1995}). 
However, in practice, $R^{L_r}$ is unknown and we must instead settle for approximations thereof: \cite{Baumann2010,Phung-Duc2010} propose using a matrix of zeros as the approximation of $R^{L_r}$, while  \cite{Bright1995,Dayar2011a} consider more refined approximations. Approximations $R^{1}_r,\dots,R^{L_r-1}_r$ of $R^{1},\dots,R^{L_r-1}$ are then obtained using \eqref{eq:fenw7a8fhewa87fna21eq} and approximations $(\pi_r(x))_{x\in\s_r}$ of the conditional distribution $(\pi(x|\s_r))_{x\in\s_r}$ are obtained {by solving \eqref{eq:ndua93a1}--\eqref{eq:ndua93a2} and applying~\eqref{eq:ndua93a3}} with $R^{1}_r$ replacing $R^{1}$ and $\Gamma^l_r:=R^{1}_r\dots R^l_r$ replacing $\Gamma^l$. The stationary distribution $\pi$ is then approximated using the zero-padded version of $\pi_r$~\eqref{eq:padding}.

\juansec{Convergence of the scheme and approximation error} 

If the sequence of truncations $(\s_r)_{r\in\zp}$ approaches the entire state space (i.e.\  $L_r\to\infty$ as $r\to\infty$) and, for each $l$ in $\zp$, the sequence $(R^l_r)_{r\in\zp}$ is increasing and has pointwise limit $R^l$  (as is the case in \cite{Baumann2010,Phung-Duc2010,Bright1995,Dayar2011a}), then the sequence of approximations $(\pi_r)_{r\in\zp}$ converges to $\pi$ in total variation, see Appendix~\ref{app:qbdconv} for a proof {(we have been unable to locate such a proof elsewhere)}. 
Except for special cases \cite{Bright1995}, it remains to be shown how to compute or bound the error of these approximations. The articles \cite{Bright1995,Phung-Duc2010,Baumann2010,Dayar2011a} employ several measures to estimate the error. However, these measures are \emph{local} in the sense that they do not account for the chain's behaviour outside of the truncation and, for this reason, can be unreliable indicators of the error, see the Section~\ref{sec:conclusion} for more on this.

\subsection{Truncation-and-augmentation}\label{sec:taa}
The \emph{truncation-and-augmentation} (TA) scheme was originally considered by  E.~Seneta\footnote{Seneta~\cite[p.242]{Seneta2006} states that the idea of `stochasticizing truncations of an infinite stochastic matrix' underpinning the TA scheme was `suggested by Sarymsakov~\cite{Sarymsakov1945} and used for other purposes'. To go from the cofactors-of-truncated-matrices formulation in \cite{Seneta1967,Seneta1968,Tweedie1971,Tweedie1973} to the formulation given here use \eqref{eq:truncatedinverse} and its discrete-time analogue \cite[p.229]{Seneta2006}.} \cite{Seneta1967,Seneta1968} for discrete-time chains in the late 60s (see also \cite{Seneta1980,Wolf1980,Heyman1991,Gibson1987a,Gibson1987b,Tweedie1973,Tweedie1998,Li2000,Liu2010,Herve2014,Masuyama2015}). Here, we discuss its continuous-time counterpart first touched upon by R.~L.~Tweedie in the early 70s \cite{Tweedie1971,Tweedie1973} (see also \cite{Hart2012,Liu2015,Kazeev2015,Cao2008,Cao2016,Cao2016b,Masuyama2016,Masuyama2017,Masuyama2017a,Gupta2017,Liu2018,Liu2018a}). {In the context of SRNs~\eqref{eq:network}, special cases of the TA scheme have more recently been referred to as the \emph{finite buffer dCME method}~\cite{Cao2008}, the \emph{stationary finite state projection (FSP) algorithm}~\cite{Gupta2017}, and the \emph{reflecting FSP approach}~\cite{Kazeev2015}.}

The TA scheme applies to $\varphi$-irreducible  chains $X$ with unique stationary distribution $\pi$. It entails approximating $\pi$ with a stationary distribution of a second chain that takes values in a given truncation $\s_r$. In particular, we choose an $\mmag{\s_r}\times\mmag{\s_r}$ \emph{re-entry matrix} $e_r=(e_r(x,y))_{x,y\in\s_r}$ satisfying
{$$e_r(x,y)\geq0\quad\forall x\in\cal{B}_o(\s_r),\enskip y\in\s_r,\qquad\sum_{y\in\s_r}e_r(x,y)=1\quad\forall x\in\cal{B}_o(\s_r),$$}
and approximate $\pi$ using a stationary distribution of the chain $X^{e_r}$ with rate matrix 
\begin{equation}\label{eq:truncandaug2}q^{e_r}(x,y):=q(x,y)+q_o(x)e_r(x,y)\quad \forall x,y\in\s_r,\end{equation}
{where the out-rate $q_o$ and out-boundary $\cal{B}_o(\s_r)$ are as in~\eqref{eq:qo}.}

Analogously to the censored chain of Section~\ref{sec:opt}, $X^{e_r}$ behaves identically to $X$ while both remain inside of the truncation. However, whenever $X^{e_r}$ tries to leave the truncation, it is instead redirected to a state sampled from $e_r(x,\cdot)$, where $x$ denotes the position of $X^{e_r}$ right before this jump {($x$ must belong to $\cal{B}_o(\s_r)$ for otherwise $X$ would be unable to jump out of the truncation)}. Because $\s_r$ is finite, Theorems~\ref{Qstateq}, \ref{doeblinc}, and \ref{lyacom} (with $v=d_2=d_1=1$) imply that $X^{e_r}$ has at least one stationary distribution and that its stationary distributions are the solutions of 
\begin{align}\pi^{e_r}_rQ^{e_r}&=0,\label{eq:stateqmod1}\\
\pi^{e_r}_r(\s_r)&=1,\label{eq:stateqmod2}
\end{align}
i.e.\  the solutions of $\mmag{\s_r}+1$ linear equations in $\mmag{\s_r}$ unknowns.
\juansec{The birth-death case}
The one-step structure of the birth-death process introduced in Section~\ref{sec:bd} implies that the chain $X$ may only return to the truncation $\s_r:=\{0,1,\dots,r-1\}$ by visiting the boundary state $r-1$. For this reason, the conditional re-entry matrix $\varepsilon_r=(\varepsilon_r(x,y))_{x,y\in\s_r}$ in \eqref{eq:exre0} is given by
$$\varepsilon_r(x,y):=1_{r-1}(y)\quad\forall x,y\in\s_r.$$
With this choice of re-entry matrix ($e_r:=\varepsilon_r$), our approximating chain $X^{e_r}$ becomes the censored chain of Section~\ref{sec:opt}, and
its unique stationary distribution is the conditional distribution given by 
\eqref{eq:onestepcond}--\eqref{eq:normalisingcond}.

\juansec{The general case and the TA scheme}
For general $\varphi$-irreducible chains, it is not possible to compute the conditional re-entry matrix \eqref{eq:exre0}  and our approximating chain $X^{e_r}$ differs from the censored chain. {However, we can still compute one of its stationary distributions $\pi^{e_r}_r$ (by solving \eqref{eq:stateqmod1}--\eqref{eq:stateqmod2}); pad it with zeros~\eqref{eq:padding}; and use it as an approximation for $\pi$ (i.e.\ the TA scheme). 
The re-entry matrix is often chosen so that $X^{e_r}$ is $\varphi$-irreducible and \eqref{eq:stateqmod1}--\eqref{eq:stateqmod2} have a unique solution. 
In the non-unique case, it is unclear which solution should be chosen. However, in certain situations all solutions may approach $\pi$ for large enough truncations, and we are unsure whether this non-uniqueness truly limits the successful use of the TA scheme. 
Sufficient conditions for $X^{e_r}$ to be $\varphi$-irreducible include:}
\begin{enumerate}[label=(\alph*)]
\item if $X$ is irreducible and the re-entry location is chosen independently of the pre-exit location $x$: $e_r(x,y)=\alpha_r(y)$ for each {$x\in\cal{B}_o(\s_r)$ and $y\in\s_r$}, where $(\alpha_r(y))_{y\in\s_r}$ is a probability distribution; or
\item if $X$ is $\varphi$-irreducible and re-entry may occur via any state in the truncation, for all pre-exit locations: $e_r(x,y)>0$ for each {$x\in\cal{B}_o(\s_r)$ and $y\in\s_r$}.
\end{enumerate}
Note that if the re-entry matrix does not satisfy conditions $(a)$--$(b)$ above, $X^{e_r}$ may not be $\varphi$-irreducible even if $X$ is $\varphi$-irreducible and the re-entry location is independent of the pre-exit location (see Appendix~\ref{lossofuni} for an example). In practice, re-entry is often set to occur through a fixed state $z_r$ (i.e.\  $e_r(x,y)=1_{z_r}(y)$ $\forall x,y\in\s_r$) in which case we write $\pi_r^{z_r}$ instead of $\pi_r^{e_r}$.

\juansec{Choosing the re-entry matrix} 
Ideally, we would like to use the conditional re-entry matrix $\varepsilon_r=(\varepsilon_r(x,y))_{x,y\in\s_r}$ in \eqref{eq:exre0} as, in this case, the TA scheme yields the conditional distribution $\pi^{\varepsilon_r}_r$, optimal among approximating distributions (c.f.\ Section~\ref{sec:opt}). However, as mentioned before, this matrix is generally  unknown and we must instead settle for approximations $e_r$ thereof (note that an argument of the type given at the end of Appendix~\ref{app:qbdconv} shows that the approximation $\pi^{e_r}_r$ {is} close to $\pi^{\varepsilon_r}_r$ if the re-entry matrix $e_r$ close to $\varepsilon_r$). A starting point in choosing such an $e_r$ is only allowing re-entry through the states belonging to the \emph{in-boundary} $\cal{B}_i(\s_r)$, i.e.\  the collection of states through which $X$ itself can re-enter the truncation:
\begin{equation}\label{eq:insidebound}\cal{B}_i(\s_r):=\{y\in\cal{S}_r:\exists x\not\in\s_r,\enskip q(x,y)>0\}.\end{equation}
Better approximations of $\varepsilon_r$ can be obtained by running simulations or expressing $\varepsilon_r$ as an infinite sum and truncating this sum, see Appendix~\ref{app:condre} for more on the latter.

\juansec{Convergence of the scheme}
It is straightforward to find irreducible chains and re-entry matrices $e_r$ for which the TA approximations $\pi^{e_r}_r$ do not converge pointwise (e.g.\ the continuous-time version of \cite[(2.5)]{Wolf1980}). However, in the case of a fixed re-entry state $z$ independent of $r$ and an irreducible {exponentially} ergodic chain {with a regular rate matrix}, \cite[Theorem 3.3]{Hart2012} shows\footnote{This theorem's premise includes aperiodicity of the chain as a further requirement. However, we can omit it as all continuous-time Markov chains are aperiodic (e.g.\ aperiodicity follows from \cite[Theorem 3.2.1]{Norris1997}). Additionally, when reading the proof of this theorem it helps to remember that, if $Q$ is regular and there exists $v\geq 1,d_1,d_2>0$ and finite set $F$ satisfying
$$Qv(x)\leq d_11_F(x)-d_2v(x)\quad\forall x\in\s,$$
then there also exists (generally unknown) $\tilde{v}\geq1,\tilde{d}_1,\tilde{d}_2>0$ that satisfy the above with $F=\{z\}$, see \cite[Theorems 6.3 and 7.2]{Down1995}.}
that $\pi^z_r$ converges to the stationary distribution $\pi$ in total variation as $\s_r$ approaches the entire state space $\s$. 
These approximations are also known to converge for monotone chains \cite{Hart2012,Masuyama2017a}, some generalisations thereof \cite{Hart2012,Masuyama2017a}, and certain other special cases \cite{Masuyama2017,Masuyama2016,Grassmann1993}.

\juansec{The issue of error control}
Presently, the biggest drawback of the TA scheme is the difficulty in assessing the quality of its approximations. 
In \cite{Gupta2017}, the authors consider a single re-entry state $z$ {independent of $r$} and chains satisfying the Foster-Lyapunov criterion in Theorem~\ref{lyacom} for some known $v$. They propose using the so-called \emph{convergence factor}
\begin{equation}\label{eq:convergencefactor}F_r:=\left(v(z)+\max_{x\in\cal{B}_o(\s_r)}v(x)\right)O_r\end{equation}
to quantify the error, where $O_r$ denotes the \emph{outflow rate}
\begin{equation}\label{eq:outflow}O_r:=\sum_{x\in\s_r}\pi^{z}_r(x)q_o(x),\end{equation}
with  $q_o$  and $\cal{B}_o(\s_r)$ given by \eqref{eq:qo}. The rationale behind this suggestion is that, in {the regular} and exponentially ergodic case, the total variation error (and the $v$-norm error) is bounded above by the convergence factor times a constant $M$~\cite[Theorem~III.1(C)]{Gupta2017}:
\begin{equation}\label{eq:guptabound}\norm{\pi-\pi^{z}_r}_{TV}\leq\norm{\pi-\pi^{z}_r}_{v}\leq MF_r\quad\forall r\in\zp,\end{equation}
Unfortunately, the constant $M$ is generally unknown and, while the convergence factor is informative regarding the \emph{rate} of convergence,  {the values a Lyapunov function takes in a finite set can be modified as pleased} (see Appendix~\ref{app:cfprob}). Hence, $F_r$ is an unreliable measure of the error for a particular truncation $\s_r$. 

Recent efforts \cite{Liu2015,Masuyama2017,Masuyama2017a,Liu2018,Liu2018a} have been directed at obtaining computable error bounds. One of the simplest of these bounds applies to {single re-entry states $z_r$ (possibly dependent on $r$) and} irreducible chains satisfying the Foster-Lyapunov criterion in Theorem \ref{lyabia} for some known $v,f,d,F$:
\begin{align}
\norm{\pi-\pi_r^{z_r}}_{TV}&\leq \frac{1}{2}\left(1+\frac{1}{\inf_{x\in\s}f(x)}\right)\left(v(z_r)+\frac{d}{\beta \overline{\phi}^\beta_r}\right)O_r\leq \left(v(z_r)+\frac{d}{\beta \overline{\phi}^\beta_r}\right)O_r,
\label{eq:l1boundsfsp}
\end{align}
for all $r$, such that $F$ is contained in the truncation $\s_r$ and $\overline{\phi}^\beta_r$ is positive.
Here, $\beta$ is any positive constant and
\begin{equation}\label{eq:phir}\overline{\phi}^\beta_r:=\max_{x\in\s_r}\min_{y\in F}\phi^\beta_r(x,y),\qquad \phi^\beta_r:= \left(I- \frac{1}{\beta} \, Q_r \right)^{-1},\end{equation}
where $Q_r$ denotes the truncated rate matrix $(q(x,y))_{x,y\in\s_r}$, and $I$ denotes the identity matrix $(1_x(y))_{x,y\in\s_r}$. Note that $\overline{\phi}^\beta_r$ is known \cite{Liu2018} to be positive for all sufficiently large $r$. The bound~\eqref{eq:l1boundsfsp} follows from \cite[Corollary 2.3]{Liu2018} and the fact that $f\geq 1$ (as mandated by Theorem \ref{lyabia}).

The quality of the  bound~\eqref{eq:l1boundsfsp}  (and of other bounds presented in \cite{Liu2015,Masuyama2017,Masuyama2017a,Liu2018,Liu2018a}) depends on the $v,f,d,F$ available. Finding such functions and constants is often a formidable task in practice and, as we show in Example \ref{schloglfsp} below, the error bounds can be rather conservative.  Furthermore, the computation of the bounds is more expensive than that of the approximation because it requires a matrix inversion in~\eqref{eq:phir} (see \cite[Remark 2.6]{Masuyama2017} for advice on this matter). Note that $\beta$ is a free parameter to be chosen.
However, it is unclear how the $1/(\beta \overline{\phi}^\beta_r)$ term in~\eqref{eq:l1boundsfsp} varies with $\beta$ and, consequently, what $\beta$s yield tighter error bounds (see \cite[Section~4.2.3]{Masuyama2017} for further discussion). {
As pointed out by one of our anonymous referees, 
once $v,f,d,F$ satisfying~\eqref{eq:lyabia} have been found and $\beta$ has been chosen, one can use linear programming to systematically modify the Lyapunov function $v$ inside the finite set $F$ so that the bound in~\eqref{eq:l1boundsfsp} is tightened (see Appendix~\ref{app:lptighten} for details).}

\begin{example}\label{schloglfsp}
Consider the classic autocatalytic network proposed by Schl\"ogl~\cite{schlogl1972,Vellela2009} as a model for a chemical phase transition:
\begin{equation}
\label{eq:smdl}
2S\underset{a_2}{\overset{a_1}{\rightleftarrows}}3S,
\qquad \varnothing \underset{a_4}{\overset{a_3}{\rightleftarrows}}S. 
\end{equation}
The state space is $\mathbb{N}$ and we assume that the propensities follow mass-action kinetics:
\begin{align*} 
 a_1(x):=k_1 x (x-1),
\quad a_2(x):= k_2 x (x-1) (x-2),
\quad a_3(x):= k_3, 
\quad  a_4(x):=k_4x, 
\end{align*}
where $k_1, k_2, k_3, k_4>0$ are the rate constants. 

As shown in \cite[Appendix~B]{Kuntz2017}, the chain associated with \eqref{eq:smdl} is an irreducible exponentially ergodic birth-death process with a unique stationary distribution $\pi$. In this case, an explicit formula \cite[(69)]{Kuntz2017} for the normalising constant $\gamma(\s)$ in \eqref{eq:normalising} can be obtained:
\begin{equation}\label{eq:hypergeo}\gamma(\s)={_2H_2}\left(-\frac{c_1+1}{2},\frac{c_1-1}{2};-\frac{c_2+1}{2},\frac{c_2-1}{2};\frac{k_1}{k_2}\right),\end{equation}
where $c_1:=\sqrt{1-4 {k_3}/{k_1}}$ and $c_2:= \sqrt{1-4 {k_4}/{k_2}}$, and ${_2H_2}$ denotes the generalised hypergeometric function.

Using \eqref{eq:1deg}--\eqref{eq:normalising}~and~\eqref{eq:hypergeo}, it is straightforward to compute the total variation errors of the TA approximations, so as to benchmark the performance of the {refined version~\eqref{eq:refbounds} of the} error bounds~\eqref{eq:l1boundsfsp}.  Figure~\ref{fig:sfsp} shows the stationary distribution, total variation approximation errors $\norm{\pi-\pi^{z_r}_r}_{TV}$,  {and refined error bounds~\eqref{eq:refbounds}  obtained using three different Lyapunov functions: $v(x)=x,x^2,x^3$.} 
To compute the approximations, we used truncations consisting of the first $r$ states,
$\s_r:=\{0,1,\dots,r-1\}$ and re-entry states $z_r=0,r-1$. 

\begin{figure}[h!]
	\begin{center}
	\includegraphics[width=1\textwidth]{./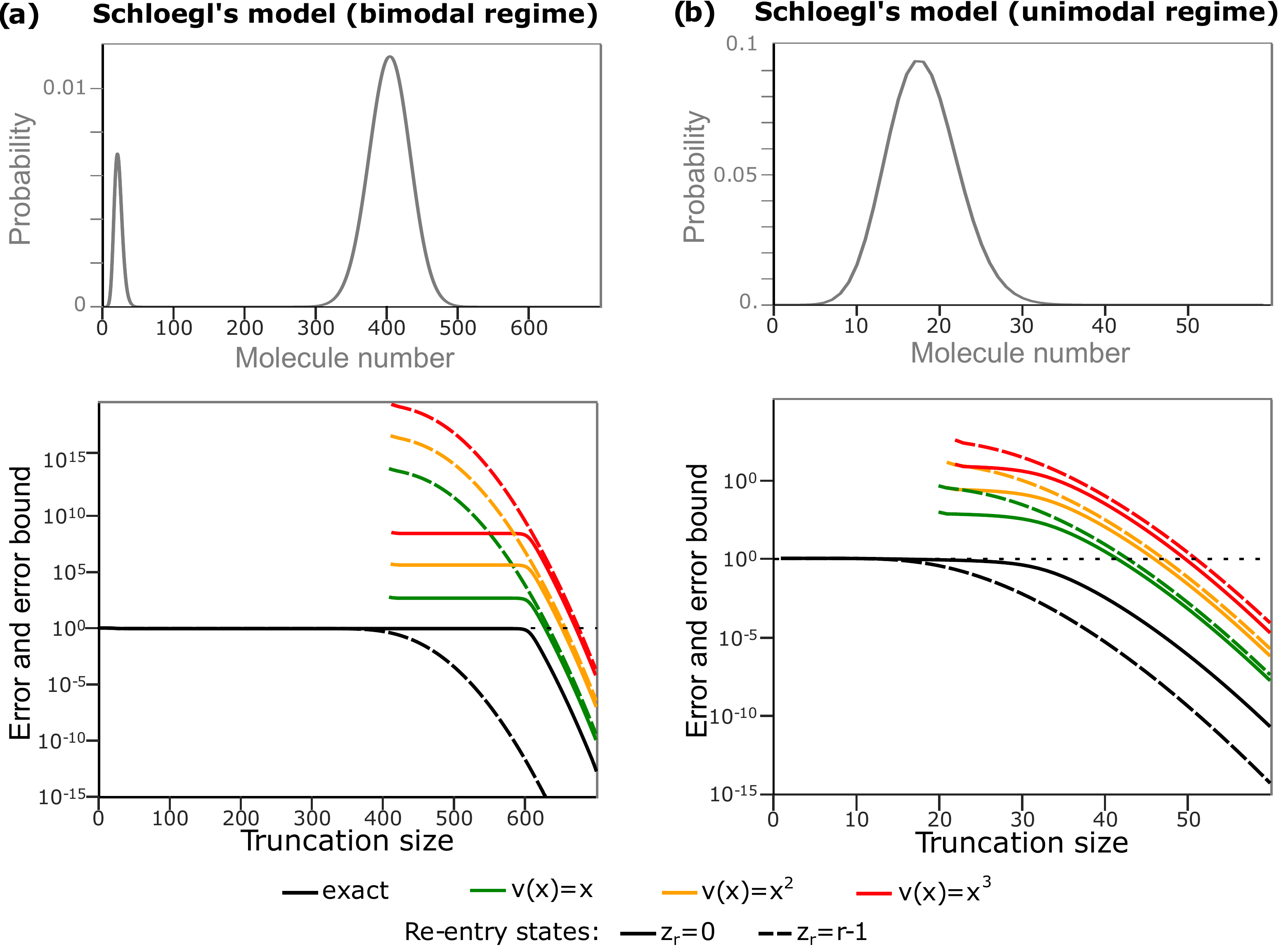}
	\end{center}
	\vspace{-5pt}
\caption{\textbf{Applying the TA scheme to Schl\"ogl's model~\eqref{eq:smdl}.} \textbf{(a)} Rate constants $k_1=0.025$, $k_2=4.17\times10^{-5}$, $k_3=60$, and $k_4=3.127$ lead to a bimodal stationary distribution (top plot) computed using \eqref{eq:1deg}--\eqref{eq:normalising} and \eqref{eq:hypergeo}. {Total variation approximation error $\norm{\pi-\pi^{z_r}_r}_{TV}$ of the TA scheme (black, bottom plot) and error bounds \eqref{eq:refbounds} (coloured,  bottom plot) as a function of $r$ with re-entry states $z_r=0$ (solid) and $z_r=r-1$ (dashed). In the Foster-Lyapunov criterion for the error bounds, we used $v(x)=x$ with $F=\{0,1,\dots,437\}$ (green), $v(x)=x^2$ with $F=\{0,1,\dots,439\}$ (yellow), and $v(x)=x^3$ with $F=\{0,1,\dots,441\}$ (red). We chose the free parameter $\beta$ in~\eqref{eq:phir} so as to obtain the best possible bounds (the optimal $\beta$ varied little with $r$: it was approximately $0.7$). \textbf{(b)} As in (a) but with rate constants $k_1=6$, $k_2=1/3$, $k_3=50$, and $k_4=3$ that lead to a unimodal stationary distribution. In this case, we used $v(x)=x$ with $F=\{0,1,\dots,19\}$ (green), $v(x)=x^2$ with $F=\{0,1,\dots,20\}$ (yellow), and $v(x)=x^3$ with $F=\{0,1,\dots,22\}$ (red) in the Foster-Lyapunov criterion, and we chose a $\beta$ that yields best bounds (as in the bimodal case, the optimal $\beta$ varied little with $r$: it was approximately $28$). }}\label{fig:sfsp}\vspace{-20pt}
\end{figure}

In  Figure~\ref{fig:sfsp}(a), we employ rate constants that lead to a bimodal stationary distribution with a small peak centred around $20$ molecules and a second larger peak centred around $440$ molecules. In Figure~\ref{fig:sfsp}(b) we use rate constants that lead to a unimodal stationary distribution with a peak centred around $20$ molecules. In both cases, we achieved a smaller error with re-entry state $z_r=r-1$ than with $z_r=0$ (up to $2\times10^{12}$  times smaller in (a) and up to $4\times10^3$ times in (b)). This stark difference is partly explained because for $z_r=r-1$ the TA scheme returns the conditional distribution, which is optimal in the sense of Section~\ref{sec:opt}. In contrast, the choice $z_r=0$ is thought \cite[Section~5]{Gibson1987b} to lead to the worst possible error because $0$ is the state furthest away from the boundary of the truncation in terms of travel time for the chain.
Similarly, the bounds~\eqref{eq:refbounds} proved to be far more conservative with the re-entry state $z_r=r-1$ than with $z_r=0$. {In Figure~\ref{fig:sfsp}(a), the bounds were greater than the error for $z_r=r-1$ by a factor of ${\sim}10^{15}$ (for $v(x){=}x$), ${\sim}10^{18}$ ( for $v(x){=}x^2$), and ${\sim}10^{21}$ (for $v(x){=}x^3$), whereas the bounds were greater than the error for $z_r=0$ by a factor of ${\sim}550$ (for $v(x){=}x$), ${\sim}10^{5}$ (for $v(x){=}x^2$), and ${\sim}10^{10}$ (for $v(x){=}x^3$), regardless of the truncation size $r$. In Figure~\ref{fig:sfsp}(b), this range narrowed: the bounds were ${\sim}10^6$ ($v(x){=}x$), ${\sim}10^8$ ($v(x){=}x^2$), and ${\sim}10^{10}$ ($v(x){=}x^3$) times greater than the error for $z_r=r-1$ and ${\sim}10^3$ ($v(x){=}x$), ${\sim}10^4$ ($v(x){=}x^2$), and ${\sim}10^{6}$ ($v(x){=}x^3$) times for $z_r=0$.} For large $r$, the bounds appeared to become almost independent of the re-entry state $z_r$ {and proportional to the error of the approximation obtained with $z_r=0$}. This could explain why the bounds are far more conservative for good re-entry choices, such as $z_r=r-1$, than for poor ones, such as $z_r=0$.

{The quality of the bounds deteriorated with the degree $n$ of the Lyapunov function $v(x)=x^n$ by $1$--$3$ orders of magnitude per degree. This could  be a consequence of higher degree polynomials $v$ and $Qv$ achieving higher values in $F$ (which was roughly the same set for all $n$, see the caption of Figure~\ref{fig:sfsp}) and inflating $v(z_r)$ and $c_r$ in~\eqref{eq:refbounds}. The deterioration with increasing $n$ seems independent of $r$: the shape of the error bound curves 
is similar, 
indicating that  
the dependence is dominated by the outflow rate $O_r$~\eqref{eq:outflow}.

The refined bounds in~\eqref{eq:refbounds} proved to be $1.5$--$3$ times tighter than those in~\eqref{eq:l1boundsfsp} obtained  with the naive choice $d:=\max_{x\in F}Qv(x)+1$. This is a significant practical boon, but not one that noticeably altered the semi-log plots in Figure~\ref{fig:sfsp}. On the other hand, choosing $\beta$ proved very influential, but non-trivial and expensive: both too small and too large $\beta$ values made $\beta\overline{\phi}_r^\beta$ arbitrarily small and each evaluation of $\beta\overline{\phi}_r^\beta$ required inverting a $r\times r$-dimensional matrix. Moreover, for certain parameter values (e.g.\ those in Figure~\ref{fig:sfsp}(b) with $r\geq 20$) the function $\beta\mapsto\beta\overline{\phi}_r^\beta$ was non-concave and had multiple local minima. Hence, a simple gradient ascent approach would not necessarily return a global maximum and we had to resort to evaluating $\beta\overline{\phi}_r^\beta$ for many $\beta$. This issue was ameliorated by the fact that $\overline{\phi}_r^\beta$ is known~\cite{Liu2018} to converge as $r\to\infty$ and this convergence occurred rapidly for our parameter sets. Hence, $\beta$ values that were optimal for some $r$ proved to be good candidates for other values of $r$.} 
\end{example}

\subsection{Iterated truncation-and-augmentation}\label{sec:iter}

The \emph{iterated truncation-and-augmentation (ITA)} builds on the work of Courtois and Semal \cite{Courtois1984,Courtois1985,Courtois1986,Semal1995} for the discrete-time case and that of Dayar, Spieler, \textit{et al} \cite{Dayar2011,Spieler2014} for the continuous-time one (see also \cite{Muntz1989,Lui1994,Mahevas2001} for related work by others) which showed that the stationary distribution $\pi$ can be bound by repeatedly applying the TA scheme. 
The key observation here is that, at least in the irreducible ergodic case, the conditional distribution \eqref{eq:picond} is a convex combination of the TA approximations $\pi^z_r$ with re-entry states $z$ belonging to the in-boundary $\cal{B}_i(\s_r)$ (c.f.\ \eqref{eq:insidebound}):
\begin{equation}\label{eq:condconv}\pi(\cdot|\s_r)=\sum_{z\in\cal{B}_i(\s_r)}\theta_z\pi^z_r(\cdot)\end{equation}
for some non-negative weights $(\theta_z)_{z\in\cal{B}_i(\s_r)}$ satisfying $\sum_{z\in\cal{B}_i(\s_r)}\theta_z=1$, see Appendix~\ref{app:condconvproof} for a proof. Due to \eqref{eq:condconv}, we obtain~\cite[Theorem 4]{Dayar2011} upper and lower bounds on the conditional distribution  $\pi(\cdot|\s_r)$ in \eqref{eq:picond} by exhaustively searching over the TA approximations $\pi^z_r$ with re-entry states $z$ belonging to the in-boundary:
\begin{equation}\label{eq:csbounds}\min_{z\in \cal{B}_i(\s_r)}\pi^z_r(x)\leq \pi(x|\s_r)\leq \max_{z\in \cal{B}_i(\s_r)}\pi^z_r(x)\quad\forall x\in\s_r{.}\end{equation}
%

{Because, by its definition in~\eqref{eq:picond}, the conditional distribution~$\pi(\cdot|\s_r)$ bounds  the stationary distribution~$\pi$, the upper bounds in~\eqref{eq:csbounds} also bound $\pi$.} To convert the {lower} bounds on the conditional distribution into {lower} bounds on the stationary distribution $\pi$, the authors of \cite{Dayar2011,Spieler2014} compute a tail bound using the Foster-Lyapunov criterion in Theorem~\ref{lyabia} as described in Remark \ref{dsapproach}.  In order to facilitate the comparison of the schemes' performances in Section~\ref{togglesec}, here we instead use tail bounds obtained with the moment bound approach of Section~\ref{sec:truncerr}. In particular, suppose that we have at our disposal a norm-like function $w$ and constant $c$ such that $\pi$ satisfies the moment bound~\eqref{eq:mombound} and let $\s_r$ be the $r^{th}$ sublevel~\eqref{eq:sublevel} set of $w$. In this case, the definition \eqref{eq:picond} of the conditional distribution, the tail bound \eqref{eq:tailbound}, and the conditional bounds \eqref{eq:csbounds} imply that %
\begin{equation}\label{eq:itaabounds}l_r(x):=\left(1-\frac{c}{r}\right)
\min_{z\in\cal{B}_i(\s_r)}\pi^z_r(x) \leq \pi(x)\leq \max_{z\in\cal{B}_i(\s_r)}\pi^z_r(x)=:u_r(x)\quad\forall x\in\s_r.\end{equation}
After padding these bounds with zeros \eqref{eq:padding}, the approximation error of $l_r$ can be computed using \eqref{eq:trunerrexp2} while that of $u_r$ can be bounded using \eqref{eq:uppererbound1}--\eqref{eq:uppererbound2}.

A useful observation is that the total variation and $\ell^1$ errors of the lower bounds are bounded below by the tail bound:
\begin{align*}
\norm{\pi-l_r}_{TV}=\norm{\pi-l_r}_{{1}}&=1-l_r(\s_r)=1-\left(1-\frac{c}{r}\right)\sum_{x\in\s_r}\min_{z\in\cal{B}_i(\s_r)}\pi^z_r(x)\\
&\geq 1-\left(1-\frac{c}{r}\right)\min_{z\in\cal{B}_i(\s_r)}\pi^z_r(\s_r)= \frac{c}{r}\quad \forall r\geq c.\end{align*}
As we will see in Section~\ref{togglesec}, the accuracy {of the }upper bounds is not limited by the tail bound and, consequently, the upper bounds outperform the lower ones for sufficiently large truncations.

\juansec{Bounding stationary averages} 
In applications, we are often only interested {in} one, or several, stationary averages of the form $\pi(f)$ instead of the full distribution, where $f$ is a given real-valued function on $\s$. In this case, it follows from \eqref{eq:condconv} that
\begin{equation}\label{eq:boundsitaf}l_r^f\leq \pi_{|r}(f)\leq u_r^f\quad\forall r\geq c,\end{equation}
where $\pi_{|r}$ denotes the restriction of $\pi$ to the truncation \eqref{eq:rest} and
\begin{align}l_r^f&:=\min
\left\{\min_{z\in\cal{B}_i(\s_r)}\pi^z_r(f), \, \left(1-\frac{c}{r}\right) \min_{z\in\cal{B}_i(\s_r)}\pi^z_r(f)
\right\},\nonumber\\
 u_r^f&:=\max
\left\{\max_{z\in\cal{B}_i(\s_r)}\pi^z_r(f), \, \left(1-\frac{c}{r}\right)\max_{z\in\cal{B}_i(\s_r)}\pi^z_r(f)
 \right\}\label{eq:lfuf},\end{align}
see Appendix~\ref{app:boundsitafproof} for details. Using \eqref{eq:boundsitaf} and an argument of the sort in the proof of \cite[Theorem~15]{Kuntz2017}, we obtain the following bounds on the stationary average:

\begin{enumerate}[label=(\roman*)]

\item If $f$ is non-negative  outside the truncation (i.e.\  $f(x)\geq0$ for $x\not\in\s_r$), then 
\begin{equation}\label{eq:fboundsp}l_r^f\leq \pi(f).\end{equation}
\item If $f$ is non-positive outside the truncation (i.e.\  $f(x)\leq0$ for $x\not\in\s_r$), then
\begin{equation}\label{eq:fboundsn}\pi(f)\leq u_r^f.\end{equation}
\item If $f$ is $\pi$-integrable and {the rate of growth of $f$ is at most proportional to that of $w$ (i.e.\  $\sup_{x\not\in\s_1}|f(x)|/w(x)<\infty$)},
then
\begin{equation}\label{eq:fbounds}
l_r^f-c \, \sup_{x\not\in\s_r}\frac{\mmag{f(x)}}{w(x)} 
\leq \pi(f)\leq 
u_r^f+c \, \sup_{x\not\in\s_r}\frac{\mmag{f(x)}}{w(x)}.
\end{equation}
\end{enumerate}
In summary, we use the bounds in \eqref{eq:fboundsp}--\eqref{eq:fbounds} as approximations of the stationary average $\pi(f)$. {By computing both a lower bound $L$ and an upper  bound $U$ on $\pi(f)$, we constrain} the approximation error:  {$L$} and {$U$} are no further than {$U-L$} away from $\pi(f)$ (similarly, the midpoint {$(L+U)/2$} of the bounds is no further than {$(U-L)/2$} away from $\pi(f)$). {We should point out here that, as long as $\pi(w)\leq c$, the bounds \eqref{eq:fboundsp}--\eqref{eq:fbounds} hold for \emph{any} $l^f_r, u^f_r$ satisfying~\eqref{eq:boundsitaf} and not just \eqref{eq:lfuf} computed using the ITA scheme (indeed, these bounds were originally introduced in \cite{Kuntz2017} for the scheme discussed in Section~\ref{lpappiter}).}
\juansec{Bounding marginal distributions}
In the case of high-dimensional state spaces, we are often interested in one or more marginal distributions rather than the full stationary distribution $\pi$. A marginal distribution is defined with respect to a partition $\{A_i\}_{i\in\cal{I}}$ of the state space:
\[
\bigcup_{i\in\cal{I}} A_i=\s, 
\quad  A_i \cap A_j=\emptyset\quad \forall i \neq j \in \cal{I}.
\]
The corresponding  \emph{marginal} $\hat{\pi}$ is the probability distribution on the indexing set $\cal{I}$ defined by
\begin{equation}\label{eq:marginal2}\hat{\pi}(i):=\pi(A_i),\quad \forall i\in\cal{I}.\end{equation}
For instance, in the case of an SRN~\eqref{eq:network} with state space $\nn$, $\hat{\pi}$ may be the distribution describing the molecule counts of the $k^{th}$ species so that
\begin{equation}\label{eq:kthm}A_i:=\n^{k-1}\times\{i\}\times \n^{n-k}\quad \forall i\in\n,\qquad \cal{I}:=\n.
\end{equation}

Let $\hat{l}_r^i$ (resp. $\hat{u}_r^i$) denote $l_r^f$ (resp. $u_r^f$) in \eqref{eq:boundsitaf} with $f$ being the indicator function $1_{A_i}$ of the set $A_i$. It follows from \eqref{eq:fboundsp} that $\hat{l}_r^i$ is a lower bound on $\hat{\pi}(i)$. On the other hand, because we may be marginalising  over states not included in the truncation $\s_r$ (e.g.\ in the case of \eqref{eq:kthm}), $\hat{u}_r^i$ is not necessarily an upper bound on $\hat{\pi}(i)$ . Collecting these quantities together and padding them with zeros, we obtain approximations of the marginals analogous to those  of the entire distribution in \eqref{eq:itaabounds}:
\begin{equation}\label{eq:lumeasdefm}\hat{l}_r(i):=\left\{\begin{array}{ll}\hat{l}_r^{i}&\text{if }i \in\cal{I}_r\\0&\text{if }i \not\in\cal{I}_r\end{array}\right.,\qquad \hat{u}_r(i):=\left\{\begin{array}{ll}\hat{u}_r^{i}&\text{if }i \in\cal{I}_r\\0&\text{if }i \not\in\cal{I}_r\end{array}\right.,\qquad \forall i\in\cal{I},\end{equation}
where $\cal{I}_r=\{i\in\cal{I}:A_i\cap\s_r\neq\emptyset\}$ is the (finite) subset of $i$s in $\cal{I}$ such that the intersection of $A_i$ with the truncation is non-empty. 
Similar manipulations to those behind \eqref{eq:trunerrexp2}--\eqref{eq:uppererbound2} give us a computable expression for the approximation error of $\hat{l}_r$ and bounds on that of $\hat{u}_r$:
\begin{align}||\hat{l}_r-\hat{\pi}||_{TV}&=||\hat{l}_r-\hat{\pi}||_{1}=1-\hat{l}_r(\cal{I}_r),\label{eq:hd72dh121j121}\\
\hat{u}_r(\cal{I}_r)-1&\leq\norm{\hat{u}_r-\hat{\pi}}_{TV}\leq\max\left\{\hat{u}_r(\cal{I}_r)-1+\frac{c}{r},\frac{c}{r}\right\},\\
\hat{u}_r(\cal{I}_r)-1&\leq\norm{\hat{u}_r-\hat{\pi}}_{1}\leq \hat{u}_r(\cal{I}_r)-1+\frac{2c}{r},\label{eq:hd72dh121j123}\end{align}
see \cite[Section~IVB1]{Kuntz2017} for details. Thus, while $\hat{u}_r$ may not bound $\hat{\pi}$ from above, its total variation and $\ell^1$ errors are straightforward to bound in practice. {As above, \eqref{eq:hd72dh121j121}--\eqref{eq:hd72dh121j123} hold for any bounds $\hat{l}^r(i)\leq \pi(A_i\cap\cal{S}_r)\leq \hat{u}^r(i)$ and not just those obtained with the ITA scheme.}

\juansec{The issue of convergence}
Little is known about this scheme's convergence. As shown in \cite[p.930]{Courtois1986}, $\pi^{z}_r(x)\leq\pi^x_r(x)$ for all $z$ and $x$ in $\s_r$, and it follows from \eqref{eq:itaabounds}  that $u_r(x)\leq \pi^x_r(x)$ for all $x$ in $\s_r$. Whenever the TA scheme converges (see end of Section~\ref{sec:taa}), $\pi^x_r(x)$ tends to $\pi(x)$ as $r$ approaches infinity implying that the upper bounds $u_r$ converge pointwise to $\pi$:
$$\lim_{r\to\infty}u_r(x)=\pi(x)\quad\forall x\in\s.$$ 
Because no analogous inequality is available for the lower bounds $l_r$ and because the in-boundary $\cal{B}_i(\s_r)$ in \eqref{eq:insidebound} over which we optimise varies with $r$, we have been unable to establish any type of convergence for $l_r$. However, were we able to show that $l_r$ converges pointwise, we could show that it converges in total variation using the {trick in Appendix~\ref{app:itaconvtrick}.}

\subsection{The linear programming approach}\label{lpapp}

To obtain approximations of the stationary distributions with strong convergence guarantees and computable errors, we  introduced in \cite{Kuntz2018a,Kuntz2017} two  truncation-based schemes that employ linear programming. They apply to chains with one or more stationary distributions under  the following assumption:
\begin{assumption}[{Moment bound}]\label{ass:1}We have at our disposal a norm-like function $w$ and constant $c$ such that the moment bound \eqref{eq:mombound} holds for all stationary distributions $\pi$.\end{assumption}

{If the rate matrix is regular,} Assumption~\ref{ass:1} and Theorem~\ref{Qstateq} imply that the set of stationary distributions is the set stationary solutions $\cal{P}$ of the CME  that satisfy the moment bound: 
\begin{equation}\label{eq:brp}\cal{P}:=\left\{\pi\in\ell^1: \begin{array}{l}
\pi Q(x)=0\enskip\forall x\in\s,\\
\pi(\s) = 1,\\
\pi(w)\leq c,\\
\pi(x)\geq0\enskip\forall x\in\s
\end{array}\right\}.\end{equation}
The linear programming (LP) scheme consists of viewing $\cal{P}$ as  a convex polytope in $\ell^1$, building finite-dimensional \emph{outer approximations} thereof, and optimising over these approximations. In particular, we set the truncation $\s_r$ to be the $r^{th}$ sublevel set \eqref{eq:sublevel} of $w$ and define the outer approximation
\begin{equation}\label{eq:br}\cal{P}_r:= \left\{\pi_r\in\ell^1: \begin{array}{l}
   \pi_r Q(x)=0 \enskip \forall x\in\cal{N}_r,\\
\pi_r(\s_r^c)=0,\\
1-c/r\leq \pi_r(\s)\leq 1,\\
\pi_r(w)\leq c,\\
\pi_r(x)\geq0\enskip\forall x\in\s
\end{array}\right\},\end{equation}
where
\begin{equation}\label{eq:nr}\cal{N}_r:= \left \{x\in\s_r: q(z,x)=0, \enskip \forall z \in \s^c_r \right \}\end{equation}
denotes the set of states inside the truncation that cannot be reached in a single jump from outside. For instance, in the case of SRNs~\eqref{eq:network} with rate matrices \eqref{eq:qmatrixsrn}, we have that $x$ belongs to $\cal{N}_r$ if and only if $x-\nu_j$ belongs to $\s_r$ for every $j$ such that $x-\nu_j$ belongs to $\s$ and $a_j(x-\nu_j)>0$. 
 
We say that $\cal{P}_r$ is an \emph{outer approximation} of $\cal{P}$ because the restriction $\pi_{|r}$~\eqref{eq:rest} to the truncation $\s_r$  of any stationary solution $\pi$ in $\cal{P}$ belongs to $\cal{P}_r$ \cite[Lemma~14]{Kuntz2017}. This outer approximation is $\mmag{\s_r}$-dimensional in the sense that any $\pi_r$ in $\cal{P}_r$ has support contained in the truncation ($\pi_r(x)=0$ for all $x\not\in\s_r$). Interestingly, the conditional distribution $\pi(\cdot|\s_r)$ defined in \eqref{eq:picond} also belongs to $\cal{P}_r$, see Appendix~\ref{app:condouter} details. 
\juansec{The birth-death case} 
In the case of the birth-death processes introduced in Section~\ref{sec:bd}, it is straightforward to obtain simple analytical descriptions of the outer approximations $\cal{P}_r$. In particular, suppose that we have at our disposal a bound $c$ on the mean of the stationary distribution $\pi$ so that \eqref{eq:mombound} holds with $w(x):=x$  and  consider the truncation $\s_r=\{0,1,\dots,r-1\}$ composed of the first $r$ states. Using an argument analogous to that in the proof of \cite[Theorem~11]{Kuntz2017}, we find that $\pi_r$ belongs to the outer approximation $\cal{P}_r$ if and only if there exists a constant 
\begin{equation}\label{eq:exp1}\left(1-\frac{c}{r}\right)\frac{1}{\gamma(\s_r)}\leq\alpha\leq \frac{1}{\gamma(\s_r)}\end{equation}
such that
\begin{equation}\label{eq:exp2}\pi_r(x)=\alpha\gamma(x)\quad\forall x=0,1,\dots,r-1,\qquad\pi_r(x)=0\quad\forall x=r,r+1,\dots,\end{equation}
where $\gamma(x)$ is as in \eqref{eq:1deg}. Due to \eqref{eq:1deg}--\eqref{eq:normalising}, the $\ell^1$ error of $\pi_r$ is
\begin{align*}\norm{{\pi_r}-\pi}_{1}&=\sum_{x=0}^\infty\mmag{\pi_r(x)-\pi(x)}=\sum_{x=0}^{r-1}\mmag{\pi_r(x)-\pi(x)}+\sum_{x=r}^\infty\pi(x)\\
&=\mmag{\frac{1}{\gamma(\s)}-\alpha}\gamma(\s_r)+m_r,\end{align*}
where $m_r$ denotes truncation error (c.f.\ \eqref{eq:tailmass}). Because the total variation and $\ell^1$-norms are equivalent \eqref{eq:normhier}, it follows from the above that the outer approximations $\cal{P}_r$ converge to $\cal{P}$ in the sense that any $\pi_r$ in $\cal{P}_r$ converges in total variation to the unique point $\pi$ in $\cal{P}$ as $r$ tends to infinity.
\juansec{The general case and the LP scheme}
In general, it is not possible to find analytical descriptions of the type \eqref{eq:exp1}--\eqref{eq:exp2} for the outer approximations $\cal{P}_r$. Instead, we may compute points belonging to these outer approximations by solving linear programs (LPs). LPs are particularly tractable convex optimisation problems~\cite{Boyd2004,Ben-Tal2001,Renegar2001} for which mature solvers are available. In our context, given any real-valued function $f$ on $\s$, the LP solver returns an \emph{optimal point} $\pi_r^*$ in $\cal{P}_r$ satisfying
\begin{equation}\label{eq:lp}\pi_r^*(f)=\sup\{\pi_r(f):\pi_r\in\cal{P}_r\}.\end{equation}
The optimisation problem on the right-hand side is a linear program because it entails optimising the linear functional $\pi_r\mapsto \pi_r(f)$ over a set defined by affine equalities and inequalities (known as \emph{constraints}). The supremum is referred to as the program's \emph{optimal value}. If $f:=0$, the linear program is said to be a \emph{feasibility problem} and its optimal points (i.e.\  all points in $\cal{P}_r$) are referred to as \emph{feasible points}.

In the case of a unique stationary solution {(i.e.\  $\cal{P}=\{\pi\}$)}, any feasible point $\pi_r$ of $\cal{P}_r$ can be used as an approximation of $\pi$. {In our practical experience,} the optimal points of the program
\begin{equation}\label{eq:lpmass}\sup\{\pi_r(\s_r):\pi\in\cal{P}_r\}.\end{equation}
are good approximations of $\pi$. For instance, in the birth-death case, there is only one such optimal point: the conditional distribution $\pi(\cdot|\s_r)$, optimal in the sense of Section~\ref{sec:opt}.

In the non-unique case, there exist several ergodic distribution each of which has support in a positive recurrent closed communicating class (c.f.\ Section~\ref{sec:doeblin}). If $x$ is a state in any such class $\cal{C}_i$, the optimal points of the program
\begin{equation}\label{eq:optiprogs}\sup\{\pi_r(x):\pi_r\in\cal{P}_r\},\end{equation} approximate the corresponding ergodic distribution $\pi^i$. Thus, by examining the  states to which any such optimal point $\pi_r^*$ assigns non-zero probability, we often identify the intersection of the class $\cal{C}_i$ and the truncation $\s_r$. 
Therefore, by setting $x$ to be one of the states to which $\pi_r^*$ assigns zero probability and recomputing an optimal point $\pi_r^*$, we are often able to discover other positive recurrent closed communicating classes and approximate their ergodic distributions, see \cite[Section~IVB2]{Kuntz2017} for further details and \cite[Section~VC]{Kuntz2017} for an example.

\juansec{{Obtaining moment bounds in practice---verifying Assumption~\ref{ass:1}}}
There are two main ways to find norm-like functions $w$ and constants $c$ satisfying Assumption~\ref{ass:1}. The first is to use a Foster-Lyapunov criterion of the type described in Section~\ref{sec:foster} (see also \cite{Glynn2008}). The second applies to SRNs~\eqref{eq:network} with polynomial or rational propensities and entails making use of mathematical programming approaches 
\cite{Schwerer1996,Kuntz2017,Kuntzthe,Sakurai2017,Dowdy2018,Dowdy2017,Ghusinga2017a,Gupta2014,Argeitis2014}. In this latter approach, we pick a $w$ and the mathematical programming methods yield a $c$. For guidance on how to pick $w$, see \cite[Section~IVB3]{Kuntz2017}.

\juansec{Convergence of the scheme}

In the case of a unique stationary solution $\pi$ {(i.e.\  $\cal{P}=\{\pi\}$)}, any sequence of feasible points $\pi_1\in\cal{P}_1,\pi_2\in\cal{P}_2,\dots$ converges {$w$-weakly*} to $\pi$  (c.f.\ Section~\ref{sec:convergence}), {where $w$ is the function featuring in the definition of $\cal{P}$~\eqref{eq:brp},} as long as the sets $\cal{N}_r$ in \eqref{eq:nr} approach the entire state space as $r$ approaches infinity:
\begin{equation}\label{eq:alleqs}\bigcup_{r=1}^{\infty}\cal{N}_r=\s,\end{equation}
see \cite[Corollary~3.6]{Kuntz2018a} for a proof.  In the non-unique case, any sequence of optimal points $\pi_{1}^*,\pi_2^*,\dots$ of the programs \eqref{eq:optiprogs} (with $r=1,2,\dots$) converges {$w$-weakly*} to the ergodic distribution $\pi^i$ associated with a positive recurrent closed communicating class $\cal{C}_i$ as long as $x$ belongs to $\cal{C}_i$, $Q$ is regular, and \eqref{eq:alleqs} is satisfied.

It is not difficult to see that \eqref{eq:alleqs} is equivalent to the columns of $Q$ having  finitely many non-zero entries:
$$\{z\in\s:q(z,x)\neq 0\}\text{ is finite for all } x\in\s.$$
For this reason, \eqref{eq:alleqs} asks that the chain is able to reach any given state, in a single jump, from at most finitely many others. Any SRN~\eqref{eq:network} with rate matrix \eqref{eq:qmatrixsrn}   satisfies this condition as the chain may only reach a state $x$ in a single jump from $x-\nu_1,\dots,x-\nu_m$. If \eqref{eq:alleqs}  is violated, then the equations indexed by states not in $\cup_{r=1}^{\infty}\cal{N}_r$ will not be included in any of outer approximations $\cal{P}_r$. In this case, it is possible {to} tweak the definition of the outer approximation $\cal{P}_r$ so that the convergence is recovered as long as a sufficiently good moment bound is available (in particular, one such that that $q_o$ in \eqref{eq:truncandaug} asymptotically grows slower than $w$ in the sense of \eqref{eq:weakstar2}), see \cite[Appendix~C]{Kuntz2018a} for details.

No computable error expressions or bounds are known for the approximations produced by this scheme. To obtain these, {we instead} iterate the scheme as described in the next section.
\subsection{Iterated linear programming}\label{lpappiter}
Just as for the TA scheme, iterating the LP scheme yields approximations accompanied by computable error expressions or bounds. We refer to this iterated variant (also introduced in \cite{Kuntz2017,Kuntz2018a}) as the \emph{iterated linear programming (ILP)} scheme. The ILP scheme yields bounds on $\pi$ by repeatedly solving the linear program \eqref{eq:lp} for various functions $f$.
In particular, the outer approximation property of $\cal{P}_r$ implies that the restriction $\pi_{|r}$ in \eqref{eq:rest} of any stationary solution $\pi$ in $\cal{P}$ can be bounded as follows:
\begin{equation}\label{eq:lowupthe4}l_r^f:=\inf\{\pi_r(f):\pi\in{\cal{P}_r}\}\leq\pi_{|r}(f)\leq\sup\{\pi_r(f):\pi\in{\cal{P}_r}\}=: u_r^f,\quad\forall \pi\in\cal{P}.\end{equation}
where $f$ is any given real-valued function on $\s$. We then obtain bounds on the entire average $\pi(f)$ using \eqref{eq:fboundsp}--\eqref{eq:fbounds}, {where $w$ is the function featuring in~\eqref{eq:brp}.} 

By computing these bounds for the indicator function $f:=1_x$ of each state $x$ in the truncation, we obtain state-wise lower $(l_r(x))_{x\in\s_r}$ and upper $(u_r(x))_{x\in\s_r}$ bounds on the  restriction $\pi_{|r}$ of any $\pi$ in $\cal{P}$. In the case of a unique $\pi$, we pad these bounds with zeros \eqref{eq:padding}, use them as approximations of $\pi$, and evaluate their errors using \eqref{eq:trunerrexp2}--\eqref{eq:uppererbound2}. Just as with the ITA scheme of Section~\ref{sec:iter}, the quality of the lower bounds is limited by the tail bound \cite[Proprosition~22]{Kuntz2017} while that of the upper bounds is not, see Section \ref{togglesec} for an example.

Similarly, to approximate a marginal  distribution $\hat{\pi}$ defined in \eqref{eq:marginal2}, we  compute $\hat{l}_r^i:=l_r^f$ and $\hat{u}_r^i:=u_r^f$ in \eqref{eq:lowupthe4}  for each indicator function $f=1_{A_i}$ of the sets $A_i$ with $i$ belonging to $\cal{I}_r$ (notation introduced in \eqref{eq:marginal2}--\eqref{eq:lumeasdefm}). By padding $(\hat{l}_r^i)_{i\in\cal{I}_r}$ and $(\hat{u}_r^i)_{i\in\cal{I}_r}$ with zeros \eqref{eq:lumeasdefm}, we obtain approximations of the marginal $\hat{\pi}$ whose errors can be evaluated using \eqref{eq:hd72dh121j121}--\eqref{eq:hd72dh121j123}.

Just as with the LP scheme of the previous section, no irreducibility or uniqueness assumptions are required for the ILP scheme: the bounds hold for the set of stationary solutions satisfying the moment bound \eqref{eq:mombound}. Indeed, as shown in \cite[Corollary~28]{Kuntz2017} a single non-zero state-wise lower bound {$l_r(x)$} serves as a numerical certificate proving that at most one stationary solution exists.

\juansec{The convergence of the bounds} 

If \eqref{eq:alleqs} holds and $f$ satisfies \eqref{eq:weakstar2}, then the sequences $l_1^f,l_2^f,\dots$ and $u_1^f,u_2^f,\dots$ in \eqref{eq:lowupthe4} converge to the respective infimum and supremum over the set of stationary solutions,
\begin{align*}\lim_{r\to\infty}{l^f_r}=l_f:=\inf\{\pi(f):\pi\in\cal{P}\},\qquad \lim_{r\to\infty}{u^f_r}=u_f:=\sup\{\pi(f):\pi\in\cal{P}\}.\end{align*}
If the stationary solution $\pi$ is unique, 
then the lower bounds $l_r=(l_r(x))_{x\in\s}$ on the full solution $\pi$ converge {$w$-weakly*} to  $\pi$  and the lower bounds $\hat{l}_r=(\hat{l}_r(i))_{i\in\cal{I}}$ on the marginal $\hat{\pi}$ converge in total variation to $\hat{\pi}$. Even though in our practical experience the upper bounds $u_r=(u_r(x))_{x\in\s}$ tend to converge at a faster rate than the lower ones (see Section~\ref{togglesec}), they are only known to converge pointwise to $\pi$ (and similarly for $\hat{u}_r=(\hat{u}_r(i))_{i\in\cal{I}}$ and $\hat{\pi}$). See \cite{Kuntz2017,Kuntz2018a} for detailed arguments behind these statements. 

\section{Numerical comparison of the schemes on a biological example}\label{togglesec}
In this section, we study the performance of the truncation-based schemes discussed in Sections~\ref{sec:qbd}--\ref{lpappiter} on a two-dimensional example. In particular, we consider {a toggle switch model without cooperativity~\cite{gardner2000construction,lipshtat2006genetic,strasser2012stability,Thomas2014,Thomas2020}},
$$
\varnothing \underset{a_2}{\overset{a_1}{\rightleftarrows}} P_1, \qquad \varnothing \underset{a_4}{\overset{a_3}{\rightleftarrows}} P_2,
$$
describing a network of two mutually repressing genes. For simplicity, we consider the symmetric case with repression modelled via {effective promoter-activity} functions and degradation modelled via linear decay: 
$$a_1(x) = \frac{20}{1+x_2}, \qquad a_2(x) = x_1, \qquad a_3(x) = \frac{20}{1+x_1}, \qquad a_4(x) = x_2,$$
where $x=(x_1,x_2)$ and $x_1$ (resp. $x_2$) denotes the copy number of protein $P_1$ (resp. $P_2$). The evolution of the copy numbers in time is then described by a continuous-time chain with state space $\s=\n^2$ and rate matrix \eqref{eq:qmatrixsrn}. The same argument as that in~\cite[Appendix~B]{Kuntz2017} shows that the rate matrix $Q$ is regular, that the chain is exponentially ergodic  with unique stationary distribution $\pi$, and that all of $\pi$'s moments are finite. The distribution (Figure~\ref{fig:toggleerror}(a)) is unimodal with almost all of its mass concentrated in the simplex $\{x\in\n^2:x_1+x_2<40\}$ containing $820$ states.

\juansec{Details of the different schemes}

To test the approximation schemes, we use the sublevel sets  \eqref{eq:sublevel} of the norm-like function
$$w(x):=(x_1+x_2)^6\quad\forall x\in\n^2$$
as truncations $\s_r$. Because the reactions consume or produce only one protein at a time, the chain is an LDQBDP with levels 
$$\L_l =\{x\in\n^2:x_1+x_2=l\}\quad\forall l\in\n.$$
To apply the LDQBDP scheme (Section~\ref{sec:qbd}), we use the level cut-offs  $L_r=\lfloor r^{1/6}\rfloor$ (so that the LDQBDP truncations in \eqref{eq:qbdtrunc} coincide with the sublevel sets of $w$) and we approximate $R^{L_r}$ with a matrix of zeros as done in \cite{Baumann2010,Phung-Duc2010,Hanschke1999}. For the TA scheme (Section~\ref{sec:taa}), we use a single re-entry state in the middle of the truncation's in-boundary.
To compute the bounds of the ITA  and ILP  schemes (Sections~\ref{sec:iter} and \ref{lpappiter}, respectively), and to define the outer approximations for the LP (Section~\ref{lpapp}) and ILP schemes, we compute the moment bound
$$\pi(w)\leq c := 1.8\times10^7$$
with the semidefinite programming approach  of \cite{Kuntz2017} (using all moment equations that only involve rational moments $\pi(x_1^{\alpha_1}x_2^{\alpha_2}/(1+x_1+x_2+x_1x_2))$ with exponents $\alpha_1,\alpha_2\in\n$ such that $\alpha_1+\alpha_2\leq 11$). In the LP scheme, we use optimal points of the program \eqref{eq:lpmass} as approximations of $\pi$.
To avoid numerical instability in our computations, we scale our approximations by the diagonal of the rate matrix (i.e.\  we carry out all computations using $\widetilde{\pi}_r=(q(x)\pi(x))_{x\in\s_r}$ instead of $\pi_r$). 

To benchmark the performance of all the schemes, we obtain a high precision reference approximation using the ITA scheme with a truncation $\s_{238^6}$ composed of $28441$ states 
which yields a guaranteed
total variation error~\eqref{eq:trunerrexp2} smaller than $10^{-7}$. 

\juansec{Results}

In the case of the truncation $\s_{24^6}$ composed of $300$ states, the LDQBDP, TA, and LP schemes produce approximations (blue, orange, yellow, Figure~\ref{fig:toggleerror}(b)) of the $P_1$-marginal distribution
$$\hat{\pi}(i):=\{(i,x):x\in\n\}\quad\forall i\in\n$$ 
with smaller errors than the ITA and ILP schemes do (teal, magenta, Figure~\ref{fig:toggleerror}(c)). 

\begin{figure}[h!]
	\begin{center}
	\includegraphics[width=1\textwidth]{./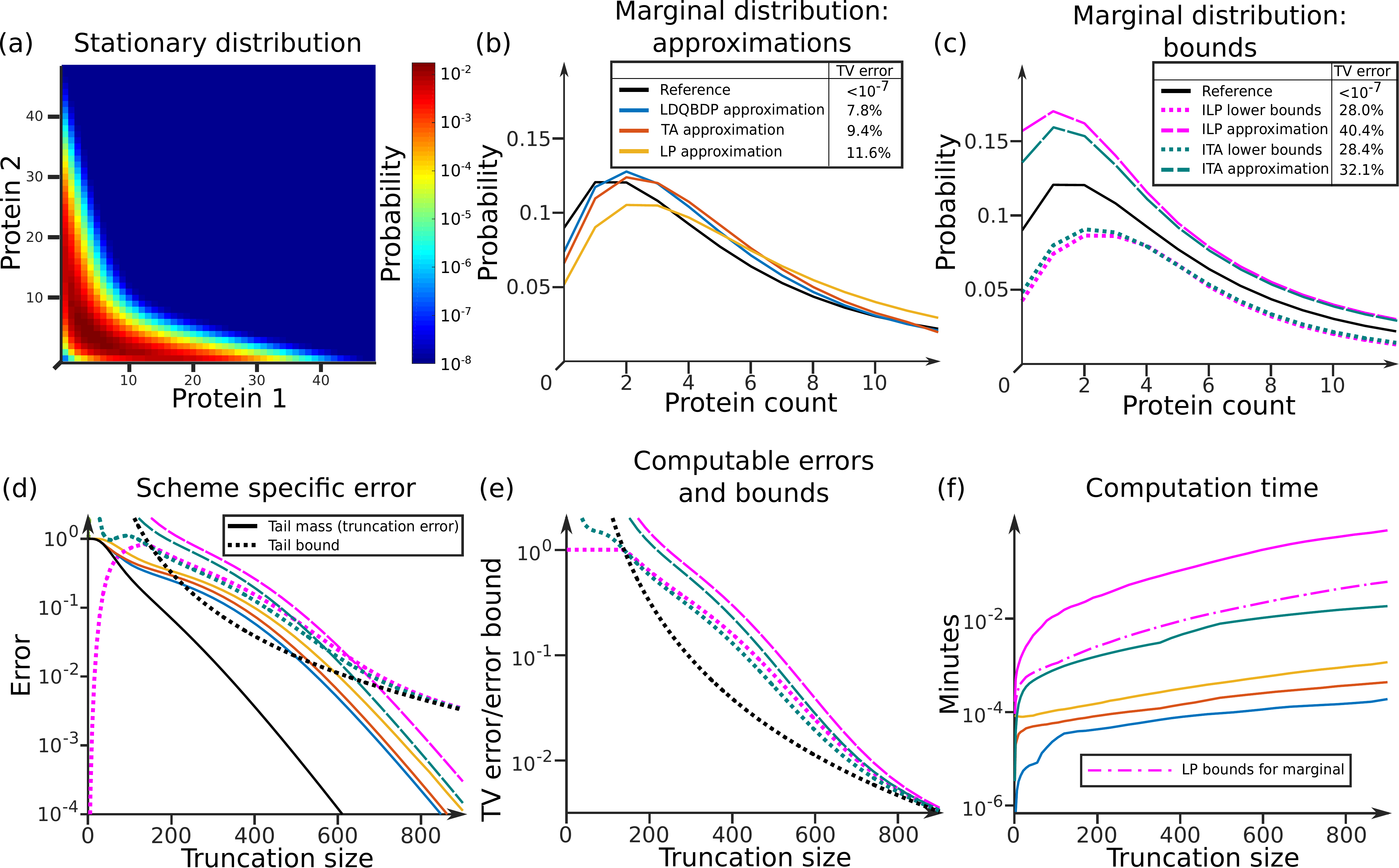}
	\vspace{-10pt}
	\end{center}
\caption{\textbf{Approximating the stationary distribution of the toggle switch model.} \textbf{(a)} High precision reference approximation of the stationary distribution obtained with the ITA scheme (see main text for details). \textbf{(b)} The $P_1$ marginal of the reference approximation (black) and lower quality approximations obtained using the LDQBDP (blue), TA (orange), and LP (yellow) schemes and the truncation $\s_{24^6}$ with $300$ states.  \textbf{(c)} As in $(b)$, with the ITA (teal) and ILP (magenta) lower bounds $\hat{l}_{24^6}$ (dotted) and approximations $\hat{u}_{24^6}$ (dashed) instead of the LDQBDP, TA, and LP approximations. \textbf{(d)} Tail mass/truncation error \eqref{eq:tailmass} (black solid), tail bound \eqref{eq:tailbound} (black dotted), and $\ell^1$ scheme-specific errors~\eqref{eq:errordecomptrun} of all five schemes (colors and line patterns as in (b,c)) as a function truncation size $\mmag{\s_r}$. \textbf{(e)} Tail bound, total variation errors \eqref{eq:trunerrexp2} of the ITA and ILP lower bounds and total variation error bounds \eqref{eq:uppererbound1} of ITA and ILP upper bounds as a function of truncation size (colors and line patterns as in (d)). \textbf{(f)} Computation times of all five schemes as a function of truncation size. In contrast with the other schemes, when using the ILP scheme, the cost of approximating the marginal distribution is smaller than that of approximating the full distribution and we plot the computation time for the marginal separately (magenta dashed/dotted) from the time for the full distribution (magenta solid).}\vspace{-25pt}\label{fig:toggleerror}
\end{figure}

To investigate how these errors depend on the truncation size $\mmag{\s_r}$, we note that the truncation error is the same for all schemes (solid black line, Figure~\ref{fig:toggleerror}(d)). We then compute the $\ell^1$ scheme-specific errors \eqref{eq:errordecomptrun} of all five schemes for truncations with sizes ranging from $1$--$903$ states (Figure~\ref{fig:toggleerror}(d)). While the LDQBDP, TA, and LP approximations consistently achieve smaller errors (solid) than the ITA and ILP upper bounds (dashed lines) do, the error of the least accurate upper bounds (ILP, dashed magenta) remains within one order of magnitude of that of the most accurate approximation (LDQBDP, blue solid line). Moreover, for sufficiently large truncations, their errors become proportional to the truncation error (black solid) or, equivalently, the tail mass \eqref{eq:tailmass}. In contrast, the errors of the ITA and ILP lower bounds (dotted lines) converge to the tail bound. Consequently, the ITA and ILP lower bounds  substantially underperform all other approximations for large enough truncations. An important observation is that the smallest scheme-specific error (LDQBDP) remains roughly two orders of magnitude greater than the truncation error (Figure~\ref{fig:toggleerror}(d)) and, thus, scheme-specific errors dominate the accuracy of all schemes. Because the truncation error is the smallest achievable error, there remains room for improvement.

Next, we focus on the computable errors~\eqref{eq:trunerrexp2} and error bounds~\eqref{eq:uppererbound1}--\eqref{eq:uppererbound2} of the ITA and ILP schemes (Figure~\ref{fig:toggleerror}(e)). In contrast, to the actual errors, the computable error bounds of the ITA and ILP upper bounds converge to the tail bound. Thus, even though the upper bounds outperform the lower bounds, the error guarantees we obtain in practice are similar in all cases. 

Finally, we compare the computation times of the schemes (Figure~\ref{fig:toggleerror}(f)).
For our example, the LDQBDP scheme outperforms all other schemes. Unsurprisingly, the bound-producing, iterative schemes ITA and ILP are orders of magnitude slower than the LDQBDP, TA and LP schemes as they involve a multiple of the latter's computations. The ILP scheme (solid magenta line) is significantly slower than the ITA scheme as involves solving a linear program per state in the truncation instead of a set of linear equations per state in the in-boundary of the truncation. However, when used to compute marginals (dash-dotted magenta line) 
far less linear programs are required and the computation times become comparable to those of the ITA scheme.

\section{Summary and open questions}\label{sec:conclusion}

Truncation-based schemes are a class of numerical methods used to approximate the stationary distributions of continuous-time Markov chains with large or infinite state spaces. They involve approximating the distribution within a finite truncation of the state space using the corresponding truncated rate matrix. We reviewed several of these schemes (Figure~\ref{fig:cartoon}) paying particular attention to their convergence and to the errors they introduce (Table~\ref{tab:1}). There are several other such schemes we omitted as we found them to be less applicable to the SRNs~\eqref{eq:network} that motivated this work. 
These other schemes include approximations for level-independent quasi-birth-death processes~{\cite{Evans1967,Wallace1969,Neuts1981,Latouche1993,Hajek1982}}, chains with Toeplitz-like rate matrices  \cite{Klimenok2006,Grassmann1990}, and certain types of queueing models, see~\cite{Neuts1981,Ramaswami1996,Takine2016,Kim2012,Phung-Duc2013,Grassmann1993,Breuer2002} and references therein.


The truncation error of any of these schemes equals the tail mass: the fraction of the stationary distribution lying outside of the truncation (Section~\ref{sec:truncerr}). It bounds from below the total variation error and may be viewed as the \emph{optimal error} given that  these schemes have the theoretical potential to achieve it (e.g.\ all five schemes discussed do achieve it in the case of birth-death processes). Moreover, our numerical experiments in Section~\ref{togglesec} suggest that the error of most of these methods is 
proportional to the truncation error for all sufficiently large truncations. For this reason, reliable estimates of, and bounds on, the tail mass are important when evaluating the error. The latter may be obtained by computing a moment bound using semidefinite programming \cite{Schwerer1996,Kuntz2017,Kuntzthe,Sakurai2017,Dowdy2018,Dowdy2017,Ghusinga2017a} and {an inequality of the type in~\eqref{eq:tailbound}}, directly using semidefinite programming (\cite[Section~4.3.1]{Kuntz2016} or \cite[Section~VII]{Dowdy2018}), or using Lyapunov functions~\cite{Glynn2008} and computational tools that search for them~\cite{Parrilo2000,Gupta2014,Argeitis2014,Dayar2011,Spieler2014}.


Similarly, selecting a truncation with a small tail mass is crucial for the computation of accurate approximations. Unfortunately, this generally proves challenging. One way to guide this choice is to generate long sample paths and fix the truncation to be the set of states that the paths spend most time in (the theoretical justification behind this approach being Corollary \ref{iredcor} and \eqref{eq:timeaverages}). However, the initial condition of the path can significantly bias the states visited (c.f.\ \cite{Gelman1992,Brooks1998} and \cite[Chap. 4]{Asmussen2007}). Because of this, several other so-called dynamic state space exploration techniques (see \cite{DeSouzaeSilva1992} and references therein) have been proposed to guide the selection of truncations. The use of these methods often turns out to be a balancing act as they can carry higher computational costs than the truncation-based schemes themselves.


Evaluating the errors introduced by these schemes is a challenging issue of practical relevance, as it eradicates the need to validate approximations using, for example, simulations. Sometimes, \emph{local error measures} such as the outflow rates and convergence factors in~\cite{Gupta2017} (see Appendix~\ref{app:cfprob} on the latter) and the residual norms in~\cite{Baumann2010,Dayar2011a,Dayar2012} are used to this end.
These present error estimates whose computation requires only the rows $q(x,\cdot)$ of the rate matrix indexed by states $x$ belonging to the truncation. 
By construction, local error measures do not account for the chain's behaviour outside of the truncation and, for this reason, are unreliable on their own. Rigorous error control requires some sort of \emph{global} information, for instance that contained in functions and constants satisfying a Foster-Lyapunov criterion, tail bounds, or moment bounds.

 
For these reasons, obtaining approximations with \emph{verified} small errors is significantly more difficult than obtaining ones with small (but unverified) errors. Schemes such as ITA (Section~\ref{sec:iter}) and ILP (Section~\ref{lpappiter}) that yield lower and upper bounds on the distribution have errors that are straightforward to evaluate or, at least, to bound. These schemes require a tail bound and their computable errors and error bounds are limited by this tail bound. For example, in the case of the toggle switch (Figure~\ref{fig:toggleerror}(e)), these computable errors and error bounds all collapse to the tail bound for sufficiently large truncations. For the toggle switch, the ITA and ILP scheme-specific errors dwarfed the truncation error (Figure~\ref{fig:toggleerror}(d)). Hence, in this case, setting $m_r$ in \eqref{eq:trunerrexp} to zero yields excellent error estimates for the upper bounds. 


The price we pay for error control is not just theoretical but computational too. Schemes that produce state-wise error bounds (ITA and ILP) are iterated variants of others that do not (TA and LP) and, thus, have computation times orders of magnitude greater (Figure~\ref{fig:toggleerror}(f)). It is worth noting that, when approximating a 
marginal distribution, the ILP scheme requires solving an LP per marginal state while the ITA scheme requires solving a set of linear equations per state in the in-boundary. Because, in the case of an SRN with $n$ species, the dimension of the in-boundary is typically $n-1$, we expect ILP to scale worse with $n$ than ITA  when approximating the entire distribution but better when approximating low-dimensional marginals thereof.


Furthermore, state-wise bounds are worst-case approximations. Hence, ITA and ILP tend to incur larger overall errors (e.g.\ see Section~\ref{togglesec}) than the non-bound producing schemes (LDQBDP, TA, and LP) do. This difference is most pronounced for the lower bounds as their errors are limited by the tail bound while those of the upper bounds are only limited by the truncation error. Indeed, in the toggle switch example (Figure~\ref{fig:toggleerror}(d)), the LDQBDP, TA, and LP approximations and the ITA and ILP upper bounds all appeared to be proportional to the truncation {error} for large truncations and the range of errors never spanned more than one order of magnitude.

An interesting alternative for error control to the iterated schemes  are the recent Lyapunov-function-based TA computable error bounds on the total variation error \cite{Liu2015,Masuyama2017,Masuyama2017a,Liu2018,Liu2018a}. While these also carry a  computational penalty as their calculation involves inverting the matrix in~\eqref{eq:phir}, they are not limited by any conservative tail bound: a potentially decisive practical boon. Indeed, our preliminary numerical experiments (Figure~\ref{fig:sfsp}) show that, even though these bounds are conservative for low to medium truncation sizes, they eventually become proportional to the actual error for large truncation sizes. Investigating this matter further, and that of how to choose the free parameters featuring in these bounds, would be very beneficial for their use.


The computational complexity of the reviewed schemes is a low-degree polynomial of the truncation size, which grows combinatorially with the number of species in the network.
Hence developing efficient implementations of these schemes (e.g.\ \cite{Baumann2013a,Dayar2012,Kazeev2015,Gupta2017,Cao2008,Cao2016,Cao2016b}) is crucial for their future use. Truncation-based schemes have an oft-unmentioned Achilles heel that we have omitted in this review: their numerical stability. It is normally the case that, for any large enough truncations, the entries of the truncated rate matrix $Q_r$ and of the approximation $\pi_r$ vary by many orders of magnitude. This results in large condition numbers and large round-off errors in the double precision floating point arithmetic typically employed when implementing the schemes. One way to mitigate this issue is to scale the approximations so that the range of orders of magnitude in their entries (and/or in those of the truncated rate matrix) is reduced \cite{Gupta2017,Kuntz2017}. Alternatively, using higher precision arithmetic also ameliorates this issue.
%
%
Stationary distributions with several isolated peaks pose a further practical challenge as any contiguous truncation including all peaks will normally also include many unimportant low-probability states and lead to high computational costs. For many of these cases, especially adapted methods along the lines of those in \cite{Spieler2014,Spieler2013,Milias-Argeitis2011} are necessary.


Another practical issue is how to deal with non-uniquness of the stationary distributions. The LDQBDP, TA, and ITA schemes implicitly assume uniqueness. Fortunately, the ILP scheme provides an automatic test for it: if a single state-wise lower bound is non-zero, then there exists at most one stationary distribution \cite[Corollary~28]{Kuntz2017}. In the non-unique case, the challenge becomes deducing what are the chain's closed communicating classes, something the LP scheme can help with as discussed in Section~\ref{lpapp} (see also \cite[Section~IVB2]{Kuntz2017}). Once these classes are identified, we approximate the ergodic distributions by replacing the state space with the appropriate closed communicating class and applying any of these schemes as we normally do for the unique case.

Aside from the above perhaps more practical matters, there remain many open theoretical questions. We close this review by listing those we find most intriguing. For any given chain, how should the re-entry matrices be chosen to ensure that the TA scheme converges? Similarly, under what circumstances does the ITA scheme converge? How should the free parameter, $\beta$ in \eqref{eq:l1boundsfsp}, be picked to produce tight computable error bounds for the TA scheme? Can similar Lyapunov-function-based error bounds be obtained for the LDQBDP scheme? Most importantly, how can we predict the approximation error a priori without running these schemes? A positive resolution to the latter question would be if, for all sufficiently large truncations, the errors of the ITA and ILP lower bounds were indeed equal to the tail bound and those of the other truncation-based approximations proportional to the tail mass as observed in the example of Section~\ref{togglesec}. Were this to be the case in general, it would open the door to a new kind of \emph{a priori error control} that would remove the need for the costly trial and error often involved in the computation of these approximations.

{\juansec{Acknowledgements} We thank an anonymous referee for their suggestion of using linear programming to tighten the error bounds of the TA scheme and for their many helpful comments that substantially improved this manuscript.}

\bibliographystyle{siamplain} 
\bibliography{statsurv}

\appendix
\addtocontents{toc}{\protect\setcounter{tocdepth}{0}}

\section{Relegated proofs and material for Section~\ref{sec:theory}}\label{app:a}

{\subsection{Chains with rate matrices that are not regular}\label{app:nonreg}
For the interested reader, we  discuss here how the theory of Sections~\ref{sec:cme}--\ref{sec:doeblin} extends beyond the case of a regular rate matrix $Q$. We still assume that $Q$ is totally stable and conservative (as defined in~\eqref{eq:qmatrix}), see \cite[Chapter~III]{Rogers2000} for an introduction to the general case.

By definition, a rate matrix is not regular if only if for at least one starting position, the chain has non-zero probability of exploding:  infinitely many jumps accumulate in a finite amount of time and the chain diverges to infinity (i.e.\ leaves every finite subset of the state space~\cite[Theorem~26.10]{Kuntz2020}). At this instant, the chain `runs out of instructions' and one must choose how to continue its path. 
In applied contexts, perhaps the most popular choice entails `killing' the chain by leaving it at some `cemetery' state $\Delta \not\in \mathcal{S}$ for all time past the explosion (i.e.\ setting $X_t=\Delta, \, \forall t\geq T_\infty$). 
The chain is then viewed as a process living in the \emph{extended state space} $\s_\Delta:=\s \cup \{\Delta\}$ and a probability measure $\Pb_{\Delta}$ satisfying 
\[
\Pb_{\Delta}(\{X_t=\Delta\enskip\forall t\geq0\})=1
\]
is constructed.
In this case, the restriction $(p_t(x))_{x\in\s}$ to $\s$ of the time-varying law $(p_t(x))_{x\in\s_\Delta}$ still satisfies the CME~\eqref{eq:CME}. 
However, $(p_t(x))_{x\in\s}$ need no longer be the CME's only solution---instead, it is the one with smallest mass, see \cite[Chapters~2~and~4]{Anderson1991} for the $\lambda=1_x$ case and \cite[Section~33]{Kuntz2020} for the general $\lambda$ case. The characterisation of stationary distributions in Theorem~\ref{Qstateq} then becomes~\cite[Theorem~43.6]{Kuntz2020} which states: `A probability distribution $\pi$ on $\s$ satisfies~\eqref{eq:statdef} if and only if it satisfies the stationary equations~\eqref{eq:stat} and the chain cannot explode when its starting position is sampled from $\pi$ (i.e.\ $\Pb_\pi(\{T_\infty=\infty\})=1$)'. See \cite[Example~1]{Miller1963} for a counter-example showing that this extra $\Pb_\pi(\{T_\infty=\infty\})=1$ requirement cannot be omitted. 

In~\cite{Meyn1993a}, it is shown that the stability theory of Sections~\ref{sec:stability}--\ref{sec:doeblin} only requires a few adjustments. In particular,  we defined 
a closed communicating class as a subset $\cal{C}$ of the state space $\s$ such that, for each $x,y \in \cal{C}$, there exists a sequence of states $x_1,\dots,x_l \in \s$ satisfying~\eqref{eq:irred} (i.e.\ a sequence of states through which a chain with rate matrix $Q$ can travel from $x$ to $y$), in which case we write $x \to y$. For killed chains, the transitive relation $\to$ must be extended to account for the fact that states in $\s$ can travel via an explosion to the cemetery state $\Delta$ from which they cannot return:
\begin{align}
\label{eq:extending}
x\to \Delta\Leftrightarrow \Pbx{\{T_\infty<\infty\}}>0,\quad \Delta\not\to x,\quad\forall x\in \s.
\end{align}
%
Hence the closed communicating classes must be redefined accordingly. The results of Sections~\ref{sec:stability}--\ref{sec:doeblin} then hold with $\s_\Delta$ replacing $\s$. Note that the singleton $\{\Delta\}$ now counts as a positive recurrent closed communicating class; $1_\Delta$ counts as an ergodic distribution; and any convex combination of $1_\Delta$ with the other ergodic distributions counts as a stationary distribution. 
In situations where $\Delta$ and the expanded state space $\s_\Delta$ are viewed as mathematical constructions of little modelling value, one could remove them from the stability theory and use the results in \cite[Sections~43--45]{Kuntz2020} showing that Theorem~\ref{doeblinc} holds as is (i.e.\ without extending~\eqref{eq:extending} the relation $\to$ and replacing $\s$ with $\s_\Delta$ throughout Sections~\ref{sec:cme}--\ref{sec:doeblin}).

Killing the chain is not the only way to continue its sample paths past an explosion such that the Markov property is preserved. For instance, at the moment of explosion, one can sample a state from any given distribution $\rho$ on $\s$; re-initialise the chain at this state; continue the sample path by running the Kendall-Gillespie algorithm up until the next explosion; sample another state from $\rho$; re-initialise, and so on. (More complicated ways of `coming back from infinity' while preserving the Markov property are also possible~\cite{Levy1951,Kendall1956,Reuter1957,Freedman1983b,Anderson1991,Rogers2000}).  Of course, the probability that a chain which comes back from infinity lies in any given state $x\in\s$ at any given  time is at least that of a chain which is killed: both chains are identical up until the first explosion and the latter may not return to $x$ past this point. For this reason, chains that are killed at the first explosion are called \emph{minimal}. 

The time-varying law of non-minimal chains need not satisfy the CME~\eqref{eq:CME} because its right-hand side does not account for the possibility that the chain enters a state directly via an explosion~ \cite[Chapters~2~and~4]{Anderson1991}. Consequently, stationary distributions might not satisfy the stationary equations~\eqref{eq:stat} obtained by setting the left-hand side of the CME~\eqref{eq:CME} to zero, see e.g.~\cite[Example~2]{Pollett1990}. On the other hand, the stability theory of Sections~\ref{sec:stability}--\ref{sec:doeblin} holds almost unchanged, except that it requires significantly more involved arguments~\cite{Meyn1993a} and that the relation $\to$ must be extended to account for the ability of the chain to travel between states by exploding and coming back.}
\subsection{Boundedness in probability and boundedness in probability on average}\label{app:bip}

A chain with a regular rate matrix is said to be \emph{bounded in probability on average} if for each $0<\varepsilon< 1$ and deterministic initial condition $x$, there exists a finite set $F\subseteq\s$ such that  the chain spends on average at least $(1-\varepsilon)$ of any sufficiently long period of time in $F$:
\begin{align}\liminf_{T\to\infty}\Ebx{\frac{1}{T}\int_0^{{T}} 1_F(X_t)dt}&=\liminf_{T\to\infty}\frac{1}{T}\int_0^T\Pbx{\{X_t\in F\}}ds\nonumber\\
&\geq 1-\varepsilon.\label{eq:bipa}\end{align}
However, all continuous-time chains are aperiodic which ensures that the limit 
$$L:=\lim_{T\to\infty}\Pbx{\{X_T\in F\}}$$
exists, see \cite[Theorem~5.1.3]{Anderson1991}. Thus, for any $\bar{\varepsilon}>0$, we can find a $\bar{T}$ such that 
$$\mmag{L-\Pbx{\{X_t\in F\}}}\leq\bar{\varepsilon}\quad\forall t\geq\bar{T}.$$
For this reason,
\begin{align*}&\mmag{\frac{1}{T}\int_0^T\Pbx{\{X_t\in F\}}dt-\Pbx{\{X_T\in F\}}}\\
&\qquad\qquad\leq\frac{1}{T}\int_0^{\bar{T}}\mmag{\Pbx{\{X_t\in F\}}-\Pbx{\{X_T\in F\}}}dt\\
&\qquad\qquad\quad+\frac{1}{T}\int_{\bar{T}}^T\mmag{\Pbx{\{X_t\in F\}}-\Pbx{\{X_T\in F\}}}dt\leq 2\frac{\bar{T}}{T}+2\bar{\varepsilon}\quad\forall T\geq \bar{T}.\end{align*}
Because we are able to make the right-hand side arbitrarily small by picking small enough $\bar{\varepsilon}$s and large enough $T$s, it is straightforward to verify that \eqref{eq:bip} holds for some given $x,F,\varepsilon$ if and only if \eqref{eq:bipa} holds for the same $x,F,\varepsilon$. In other words, a continuous-time chain is bounded in probability if and only if it is bounded in probability on average.

\subsection{The proof of Theorem~\ref{doeblinc}}\label{app:doeblin}

Part $(i)$ can be found in any textbook on Markov chains (e.g.\ \cite{Anderson1991,Asmussen2003,Norris1997}). For a proof of $(ii)$, see {\cite[Theorem~43.17]{Kuntz2020}}. Theorem~8.2 in \cite{Meyn1993a} shows that the chain is bounded on probability on average if and only if
$$\Pbx{\bigcup_{i\in\cal{I}}H_i}=1\quad\forall x\in\s.$$
Note that to use the results in \cite{Meyn1993a,Meyn1993b}, we must topologise $\s$ using the discrete metric which makes every real-valued function on $\s$ continuous and ensures that the chain is a ``T-process'' (as defined in  \cite{Meyn1993a}). Part $(iii)$ then follows from the above because 
\begin{equation}\label{eq:fen78wafhna87w3ona}\Pb_\lambda=\sum_{x\in\s}\lambda(x)\Pb_x,\end{equation}
e.g.\ see \cite[Section~26]{Kuntz2020}.  Because a sequence of probability distributions $(\mu_m)_{m\in\zp}$ on $\s$ converges to $\pi$ in total variation if and only if
$$\lim_{m\to\infty}\mu_m(f)=\pi(f)$$
for all bounded functions $f$ (e.g.\ see Appendix~B in \cite{Kuntz2018a}), \cite[Theorem~8.1]{Meyn1993a} shows that, if the chain is bounded in probability, then \eqref{eq:spaceaverages} holds with $\pi=\sum_{i\in\cal{I}}\Pbx{H_i}\pi^i$ and \eqref{eq:timeaverages} holds $\Pb_x$-almost surely with $\pi=\sum_{i\in\cal{I}}1_{H_i}\pi^i$, for all $x\in\s$. That \eqref{eq:spaceaverages}--\eqref{eq:timeaverages} hold for general $\lambda$ then follows from the bounded convergence theorem. 
{Given our assumption that $Q$ is regular, the converse also follows from $(iii)$ because~\eqref{eq:spaceaverages},  Tonelli's theorem, and monotone convergence imply that
%
%
\begin{align*}\sum_{i\in\cal{I}}\Pbl{H_i}&=\sum_{i\in\cal{I}}\Pbl{H_i}\left(\sum_{x\in\s}\pi^i(x)\right)=\sum_{x\in\s}\left(\sum_{i\in\cal{I}}\Pbl{H_i}\pi^i(x)\right)=\lim_{t\to\infty}\sum_{x\in\s}p_t(x)\\
&=1.\end{align*}
Alternatively, see \cite[Corollary~45.6]{Kuntz2020} for proofs of $(iii)$--$(iv)$ avoiding the technical set-up of \cite{Meyn1993a}.}
\subsection{Proofs of the Foster-Lyapunov criteria}\label{app:lyaproofs}

\begin{proof}[Proof of Theorem~\ref{lyareg}]The forward direction was first shown in \cite[Theorem~16]{Chen1986} (see also \cite[Corollary~2.16]{Anderson1991} or \cite[Theorem~2.1]{Meyn1993b}). The reverse direction was proven in F.~M.~Spieksma's recent paper \cite[Theorem~2.1]{Spieksma2015}.
\end{proof}

\begin{proof}[Proof of Theorem~\ref{lyabia}]The forward direction and \eqref{eq:lyabound} follow from \cite[Theorems 4.6--4.7]{Meyn1993b}. Inequality \eqref{eq:lyabound2} follows directly from \eqref{eq:lyabound}. The reverse direction in the irreducible case follows from\footnote{Note that inequality \cite[(5)]{Tweedie1981} contains a typo: the right-hand side should read ``$-\lambda y_i-1$'' instead of ``$-y_i-1$''.} \cite[Theorem 3$(i)$]{Tweedie1981}. {For the slight extension to $\cal{T}$-empty-and-$\cal{I}$-finite case, see \cite[Corollary~45.6~and~Theorem~49.1]{Kuntz2020}.}
\end{proof}

\begin{proof}[Proof of Theorem~\ref{lyacom}]Regularity and boundedness in probability follow immediately from Theorems \ref{lyareg} and \ref{lyabia}. The exponential convergence in the irreducible case follows from \cite[Theorem 7.1]{Meyn1993b} after noting that, if $w\geq1$, then the $w$-norm dominates the total variation norm (see Section~\ref{sec:convergence} for details). {For the general case, see~\cite[Theorem~50.4]{Kuntz2020}.}
\end{proof}

\section{Relegated proofs for Section~\ref{sec:truncs}}

{\subsection{Proof of \eqref{eq:trunminerr}}\label{app:trunminerr}
Let $\s^-:=\{x\in\s:\pi_r(x)<\pi(x)\}$ be the set of states whose probability $\pi_r$ underestimates and $\s^+:=\{x\in\s:\pi_r(x)\geq \pi(x)\}$ be its complement. Because $\pi_r$ has support contained in $\s_r$,
\begin{align*}
 \pi_r(\s^-)=\pi_r(\s^-_r),\quad \pi(\s^-)=\pi(\s^-_r)+\pi(\{x\not\in\s_r:0<\pi(x)\})=\pi(\s^-_r)+m_r.
\end{align*}
For these reasons,
\begin{align*}2\norm{\pi_r-\pi}_{TV}&=\norm{\pi_r-\pi}_{1}=\sum_{x\in\s^-}(\pi(x)-\pi_r(x))+\sum_{x\in\s^+}(\pi_r(x)-\pi(x))\\
&=\pi(\s^{-})-\pi_r(\s^-)+\pi_r(\s^+)-\pi(\s^+)=2(\pi(\s^{-})-\pi_r(\s^-))\\
&=2(m_r+\pi(\s^-_r)-\pi_r(\s^-_r))=2\left(m_r+\sum_{x\in\s_r^-}(\pi(x)-\pi_r(x))\right).\end{align*}
}

\subsection{The conditional distribution is the optimal approximating distribution}\label{app:opt} %
The irreducibility assumption implies that $\pi(x)>0$ for all $x$ in $\s_r$. If $\pi_r(x)=0$ for some $x$ in $\s_r$, then 
$$\frac{\mmag{\pi_r(x)-\pi(x)}}{\pi_r(x)}=\infty$$
and the maximum relative error is infinite (in particular, larger than $m_r$). Instead, suppose that $\pi_r(x)>0$ for all $x$ in $\s_r$ and let 
$$\varepsilon(x):=\pi_r(x)-\pi(x|\s_r)\qquad\forall x\in\s_r.$$
Note that
\begin{equation}\label{eq:fenuaife}\frac{\mmag{\pi_r(x)-\pi(x)}}{\pi_r(x)}=\frac{\mmag{m_r+\alpha(x)}}{1+\alpha(x)}\qquad\forall x\in\s_r.\end{equation}
where $\alpha(x)=\pi(\s_r)\varepsilon(x)/\pi(x)$ and the denominator is positive.  If $\varepsilon(x)>0$ for some state $x$ in $\s_r$, we have that 
$$\frac{\mmag{m_r+\alpha(x)}}{1+\alpha(x)}>m_r$$
because $m_r<1$ (by irreducibility) implies that $z\mapsto (m_r+z)/(1+z)$ is a strictly increasing function on $[0,\infty)$. Thus, in this case, the maximum relative error is also greater than $m_r$. Suppose instead that $\varepsilon(x)<0$ for some state $x$in $\s_r$. Because $\pi_r$ and $\pi(\cdot|\s_r)$ are probability distributions on $\s_r$, we have that $\sum_{z\in\s_r}\varepsilon(z)=0$. It follows that there must exist another state $x'$ in $\s_r$ such that $\varepsilon(x')>0$ and using the same reasoning as before we have the maximum relative error is greater than $m_r$. In other words, the maximum {relative} error is greater than $m_r$ unless $\varepsilon(x)=0$ for all $x$ in $\s_r$ in which case \eqref{eq:fenuaife} implies that the maximum relative error is $m_r$. The result follows as $\varepsilon(x)=0$ for all $x$ in $\s_r$ if and only if $\pi_r$ is the conditional  distribution $\pi(\cdot|\s_r)$.

\subsection{The censored chain and its rate matrix}\label{app:censored}

A complete and rigorous argument showing that the stochastic process $\bar{X}=(\bar{X}_t)_{t\geq0}$ obtained by erasing the segments of the paths of $X$ lying outside of {the finite truncation} $\s_r$ and  gluing together the ends of the remaining segments is statistically indistinguishable to any minimal continuous-time chain $X^{\varepsilon_r}$ with rate matrix $Q^{\varepsilon_r}$ (c.f.\ \eqref{eq:truncandaug}) involves repeated applications of the strong Markov property and requires a level of technical machinery beyond the scope of this review {(e.g.\ see \cite[Sections~28--30, 37]{Kuntz2020})}. Instead, we give a sketch of the argument and leave the full details to the motivated reader. In what follows, we assume that the $\varphi$-irreducible chain $X$ has a unique stationary distribution $\pi$ and that $\pi(\s_r)>0$ implying that the chain will keep revisiting the truncation for all time (e.g.\ this follows from \cite[Theorems~39.2, 43.15]{Kuntz2020}).

Because the process $\bar{X}$ is a continuous-time Markov chain~\cite[Section~1.6]{Freedman1983a}, it suffices~{\cite[Theorem~37.1]{Kuntz2020}} to show that its rate matrix $\bar{Q}=(\bar{q}(x,y))_{x,y\in\s_r}$ coincides with $Q^{\varepsilon_r}=(q^{\varepsilon_r}(x,y))_{x,y\in\s_r}$ in \eqref{eq:truncandaug}. To do so, let $(\bar{q}(x))_{x\in\s_r}$ denote minus the diagonal of $\bar{Q}$ and $\bar{P}=(\bar{p}(x,y))_{x,y\in\s_r}$ denote the one-step matrix of $\bar{X}$'s embedded discrete-time chain (defined by replacing $Q$ with $\bar{Q}$ in \eqref{eq:qmatrix} and \eqref{eq:jumpmat}, respectively). Similarly for $(q^{\varepsilon_r}(x))_{x\in\s_r}$, $P^{\varepsilon_r}=(p^{\varepsilon_r}(x,y))_{x,y\in\s_r}$, and $Q^{\varepsilon_r}$. Given that a rate matrix is fully determined by its diagonal and the associated one-step matrix, it suffices to show that
\begin{equation}\label{eq:jfa87fn37afmiea}
\bar{p}(x,y)=p^{\varepsilon_r}(x,y)\quad\forall x,y\in\s_r,\qquad \bar{q}(x)=q^{\varepsilon_r}(x)\quad\forall x\in\s_r.
\end{equation}
A state $x$ in $\s_r$ is absorbing for $\bar{X}$ (i.e.\  $\bar{q}(x)=0$ and $\bar{p}(x,x)=1$) if and only if it is absorbing for $X$ or jumps from $x$ always take $X$ outside of the truncation and $X$ always returns via $x$:
$$q(x)=0\enskip\text{or}\enskip p(x,\s_r^c)=1\enskip\text{and}\enskip \varepsilon{_r}(x,x)=1,$$
where $p(x,\s_r^c):=\sum_{y\not\in\s_r}p(x,y)$~in \eqref{eq:jumpmat} denotes the probability that $X$ next jumps out of the truncation if it is at $x$ while $\varepsilon_r(x,x)$  defined in \eqref{eq:exre0} denotes the probability that $X$ returns to the truncation by jumping to $x$ if the last state it visited before leaving was $x$. In either case, \eqref{eq:truncandaug} implies that $q^{\varepsilon_r}(x)=0$ and $p^{\varepsilon_r}(x,x)=1$, and \eqref{eq:jfa87fn37afmiea} follows for any absorbing $x$ in $\s_r$ and (absorbing or not) $y$ in $\s_r$.

Suppose that $x$ in $\s_r$ is not an absorbing state for $\bar{X}$ (and, hence, neither for $X$). Standard theory \cite[Sections.~A.II.1--2]{Asmussen2003} tells us that $p(x,y)$ is the probability that $\bar{X}$ first jumps to $y$ if it starts at $x$ and $\bar{q}(x)$ is one over the mean amount of time elapsed until this jump occurs:
\begin{equation}\label{eq:fbya87n873awf}
\bar{p}(x,y)=\Pb_x(\{\bar{X}_{\bar{T}_1}=y\})\quad\forall y\in\s_r,\qquad \bar{q}(x)= \frac{1}{\Ebx{\bar{T}_1}},
\end{equation}
where $\bar{T}_1$ denotes the first jump time of $\bar{X}$. Suppose that $X$ (and, consequently, $\bar{X}$) start at $x$. The paths of $\bar{X}$ that lie in a given state $y$ in $\s_r$ after the first jump correspond to the paths of $X$ {that} visit  $y$ at some point $\tau$ and remain either in $x$ or the outside of the truncation up until $\tau$. For this reason, the event $\{\bar{X}_{\bar{T}_1}=y\}$ that $\bar{X}$ first jumps to $y$ {decomposes} into the following disjoint union
\begin{align*}A_0:=\{&X\text{ jumps directly to }y\}\\
\cup A_1:=\{&X\text{ leaves the truncation, returns by jumping to }x\text{, and jumps to }y\}\\
\cup A_2:=\{&X\text{ leaves the truncation, returns by jumping to }x\text{, leaves the truncation,}\\
&\text{returns by jumping to }x\text{, and jumps to }y\}\\
\vdots&\\
\cup B_1:=\{&X\text{ leaves the truncation, returns by jumping to }y\}\\
\cup B_2:=\{&X\text{ leaves the truncation, returns by jumping to }x\text{, leaves the truncation,}\\
&\text{returns by jumping to }y\}\\
\cup B_3:=\{&X\text{ leaves the truncation, returns by jumping to }x\text{, leaves the truncation,}\\
&\text{returns by jumping to }x\text{, leaves the truncation, returns by jumping to }y\}\\
\vdots&
\end{align*}
The strong Markov property then implies that
$$\Pbx{A_n}=(p(x,\s_r^c)\varepsilon_r(x,x))^np(x,y),\enskip \Pbx{B_{n+1}}=p(x,\s_r^c)(p(x,\s_r^c)\varepsilon_r(x,x))^{n}\varepsilon_r(x,y),$$
for all $n$ in $\n$, where $p(x,y)$ in \eqref{eq:jumpmat} denotes the probability that $X$ next jumps to $y$ if it currently lies at $x$, while $\varepsilon_r(x,y)$  defined in \eqref{eq:exre0} denotes the probability that $X$ returns to the truncation by jumping to $y$ if the last state it visited before leaving was $x$. Thus, the leftmost term in \eqref{eq:fbya87n873awf} reads
\begin{align*}\bar{p}(x,y)&=\sum_{n=0}^\infty(\Pbx{A_n}+\Pbx{B_{n+1}})\\
&=(p(x,y)+p(x,\s_r^c)\varepsilon{_r}(x,y))\sum_{n=0}^\infty(p(x,\s_r^c)\varepsilon_r(x,x))^n\\
&=\frac{p(x,y)+p(x,\s_r^c)\varepsilon{_r}(x,y)}{1-p(x,\s_r^c)\varepsilon_r(x,x)}\end{align*}
Multiplying the numerator and denominator by $q(x)$ and comparing with~\eqref{eq:truncandaug}, we obtain the leftmost equation in \eqref{eq:fbya87n873awf}:
$$\bar{p}(x,y)=\frac{q(x,y)+q_o(x)\varepsilon{_r}(x,y)}{q(x)-q_o(x)\varepsilon_r(x,x)}=\frac{q^{\varepsilon_r}(x,y)}{q^{\varepsilon_r}(x)}=p^{\varepsilon_r}(x,y).$$

Similarly, assuming that $X$ starts at the state $x$, the first jump time $\bar{T}_1$ of $\bar{X}$ is equal to the total amount of time $X$ spends in $x$ up until the moment it enters any other state belonging to the truncation (i.e.\  until $X$ visits $\s_r^x:=\s_r\backslash\{x\}$ for the first time). Each visit of $X$ to state $x$ last{s} an exponentially distributed amount of time with mean $1/q(x)$ that is independent of the number of visits (e.g.\ see \cite[Theorem~A.II.1.2]{Asmussen2003}). For these reasons, the mean first jump time $\Ebx{\bar{T}_1}$ of $\bar{X}$ equals
$$\sum_{n=1}^\infty\frac{n}{q(x)}\Pbx{\{\text{$X$ visits state $x$ exactly $n$ times before hitting $\s_r^x$ for the first time}\}}.$$
The event that $X$ lies in state $x$ exactly $n$ times before visiting $\s_r^x$ for the first time decomposes into the disjoint union $A_n\cup B_n$, where
\begin{align*}A_n:=\{&X\text{ leaves the truncation and returns by jumping to $x$  consecutively $n-1$}\\
&\text{times, and then transitions from $x$ to $\s_r^x$ on the following jump}\},\\
B_n:=\{&X\text{ leaves the truncation and returns by jumping to $x$  consecutively $n-1$}\\
&\text{times, leaves the truncation again, and then hits $\s_r^x$ upon its return}\}.\\
\end{align*}
The strong Markov property then implies that
\begin{align*}\Pbx{A_n}&=(p(x,\s_r^c)\varepsilon_r(x,x))^{n-1}p(x,\s_r^x),\\
\Pbx{B_{n}}&=(p(x,\s_r^c)\varepsilon_r(x,x))^{n-1}p(x,\s_r^c)\varepsilon_r(x,\s_r^x),\quad\forall n\in\zp,\end{align*}
where $\varepsilon_r(x,\s_r^x):=\sum_{y\in\s_r^x}\varepsilon_r(x,y)$ denotes the probability the chain returns to the truncation by jumping into any state except $x$ if $x\in\cal{B}_o(\s_r)$ was the last state it visited before leaving. Because $p(x,\s_r^x)=p(x,\s_r)$ (as $x$ is not absorbing) and the never-ending returns to $\s_r$ ensure that $\varepsilon_r(x,\s_r^x)=1-\varepsilon_r(x,x)$, the above and the arithmetico-geometric progression yield the rightmost equation in \eqref{eq:fbya87n873awf}:
\begin{align*}\Ebx{\bar{T}_1}&=\frac{1}{q(x)}\sum_{n=1}^\infty n(\Pbx{A_n}+\Pbx{B_n})\\
&=\frac{p(x,\s_r)+p(x,\s_r^c)\varepsilon_r(x,\s_r^x)}{q(x)}\sum_{n=1}^\infty n(p(x,\s_r^c)\varepsilon_r(x,x))^{n-1}\\
&=\frac{1-p(x,\s_r^c)+p(x,\s_r^c)(1-\varepsilon_r(x,x))}{q(x)(1-p(x,\s_r^c)\varepsilon_r(x,x))^2}=\frac{1}{q(x)-q_o(x)\varepsilon_r(x,x)}=\frac{1}{q^{\varepsilon_r}(x)}.\end{align*}

\section{Relegated proofs and examples for Section~\ref{fiveschemes}}\label{app:c}\subsection{Convergence of the LDQBDP scheme}\label{app:qbdconv}

Suppose that the {rate matrix is regular}, that the chain is irreducible and has a stationary distribution $\pi$, that the approximations $R^1_r,R^2_r,\dots$ of $R^1,R^2,\dots$ satisfy
\begin{equation}\label{eq:rmatconv}R^l_1(x,y)\leq R^l_2(x,y)\leq \dots\leq R^l(x,y),\quad\lim_{r\to\infty}R^{l}_r(x,y)=R^l(x,y),\quad\forall x,y\in\L_l,\end{equation}
for all $l$ in $\zp$, and that the truncations $\s_r$ approach the entire state space as $r$ tends to infinity (i.e.\  $L_r\to\infty$ as $r\to\infty$). The approximations $\pi_r$ of the stationary distribution obtained {by solving \eqref{eq:ndua93a1}--\eqref{eq:ndua93a2} and applying~\eqref{eq:ndua93a3}} (with $R^{1}_r$ replacing $R^{1}$, $\Gamma^l_r:=R^{1}_r\dots R^l_r$ replacing $\Gamma^l$, and $\pi_r(\cdot)$ replacing $\pi(\cdot|\s_r)$) converge to $\pi$ in total variation as $r$ tends to infinity.

\begin{proof}To simplify the notation, we denote the approximation associated with $\s_r$ by $\pi^r$ instead of $\pi_r$ throughout this proof. Suppose we are able to show that the restriction $\pi^r_{|0}=(\pi^r(x))_{x\in\L_0}$ to the zeroth level of the approximation $\pi^r$ converges pointwise to the restriction $\pi_{|0}$ of $\pi$:
\begin{equation}\label{eq:fn7a9ewn73ahjnfw4}\lim_{r\to\infty} \pi^r_{|0}(x)=\pi_{|0}(x)\quad\forall x\in\s^0.\end{equation}
Due to \eqref{eq:rmatconv},
$$\lim_{r\to\infty}\Gamma^l_r=\lim_{r\to\infty}R^1_rR^2_r\dots R^l_r=R^1R^2\dots R^l=\Gamma^l\quad\forall l\in\zp$$
pointwise, and it would follow from \eqref{eq:fn7a9ewn73ahjnfw4} that
$$\lim_{r\to\infty}\pi^r_{|l}=\lim_{r\to\infty}\pi^r_{|0}\Gamma^l_r=\pi_{|0}\Gamma^l=\pi_{|l}$$
pointwise, for each $l$ in $\zp$. Because the state space is the union of the levels ($\s=\cup_{l=0}^\infty\L_l$), combining the above with \eqref{eq:fn7a9ewn73ahjnfw4} proves that $\pi^r$ converges to $\pi$ pointwise. The convergence in total variation then follows as a sequence of probability distributions converges pointwise to a limit that is a probability distribution if and only if it converges in total variation (c.f.\ end of Section~\ref{sec:convergence}).

All that remains to be shown is \eqref{eq:fn7a9ewn73ahjnfw4}. To do so, note that the monotone convergence theorem and \eqref{eq:rmatconv} imply that 
$$\lim_{r\to\infty}\sum_{l=0}^{L_r-1}\sum_{x'\in\L_l}\Gamma^l_r(x,x')=\sum_{l=0}^{\infty}\sum_{x'\in\L_l}\Gamma^l(x,x')\quad\forall x\in\L_0.$$
For this reason, $A_r$ converges pointwise to $A$  as $r$ tends to infinity, where $A_r$ and $A$ are the $|\L_0|\times(|\L_0|+1)$ dimensional matrices defined by
\begin{align*}A_r&:=\begin{bmatrix}Q^0+R^1_rQ^1_-&{\vert}& \sum_{l=0}^{L_r-1} \sum_{x\in\L_l}\Gamma^l_r(\cdot,x)\end{bmatrix},\\ A&:=\begin{bmatrix}Q^0+R^1Q^1_-&{\vert}& \sum_{l=0}^\infty \sum_{x\in\L_l}\Gamma^l(\cdot,x)\end{bmatrix}.\end{align*}
Multiplying {\eqref{eq:ndua93a1}--\eqref{eq:ndua93a2}} through by $\pi(\s_r)$ {and taking the limit $r\to\infty$, we find that  $\pi_{|0}$ satisfies the equations $\rho_0 A=[0\enskip1]=:b$. But, as shown in~\cite{Bright1995}, $\rho=(\rho_l)_{l=0}^\infty$, with $\rho_l:=\rho_0\Gamma^l$ for all $l>0$, is a probability distribution and satisfies the stationary equations $\rho Q=0$ whenever $\rho_0$ is a non-negative vector satisfying $\rho_0 A=b$. Because we are assuming that $Q$ is irreducible and regular, Theorem~\ref{Qstateq} and Corollary~\ref{iredcor} imply that there exists only such $\rho$ (namely the stationary distribution $\pi$). It follows that $\pi_{|0}$ must be the only non-negative solution to $\rho_0 A=b$ and, consequently, that $A$ has full row rank.}  For these reasons, $\pi_{|0}$ can be expressed in terms of the Moore-Penrose pseudoinverse of $A$: $\pi_{|0}=bA^T(AA^T)^{-1}$ where $A^T$ denotes the transpose of $A$. Because the singular values of a matrix are continuous functions of its entries (as the roots of a polynomial are continuous functions of its coefficients) and $A_r$ converges to $A$, it follows that $A_r$ has full row rank for all sufficiently large $r$. For all such $r$, we have that $\pi_{|0}^r=bA^T_r(A_rA^T_r)^{-1}$ and the convergence of $\pi_{|0}^r$ to $\pi_{|0}$ follows from that of $A_r$ to $A$.
\end{proof}

\subsection{Loss of uniqueness for the TA scheme}\label{lossofuni}

Consider a chain with state space $\{1,2,3\}$ that waits a unit mean exponential amount of time and then jumps down a state until it reaches $1$ where it remains forever:
$$Q=\begin{bmatrix}
0&0&0\\1&-1&0\\0&1&-1
\end{bmatrix}.$$
In particular, $Q$ is $\varphi$-irreducible and $(1_1(x))_{x\in\{1,2,3\}}$ is the unique stationary distribution. If we pick the truncation $\{1,3\}$ and set the re-entry to always occur at $3$, then both $1$ and $3$ are absorbing states for the modified chain and every probability distribution on $\{1,3\}$ is a stationary distribution of this chain. 
Similarly, if the re-entry location depends on the pre-exit location, then the modified chain may not be $\varphi$-irreducible even if original chain is irreducible. For instance, consider again the truncation $\{1,3\}$ of $\{1,2,3\}$, but this time with
$$Q=\begin{bmatrix}
-1&1&0\\1&-2&1\\0&1&-1
\end{bmatrix}.$$
If the re-entry matrix is the identity matrix, then $1$ and $3$ are once again absorbing states for the modified chain.

\subsection{The conditional re-entry matrix and its truncation-based approximations}\label{app:condre}

Assuming that $X$ is irreducible, the conditional re-entry matrix $\varepsilon_r$ in \eqref{eq:exre0} is \cite{Dayar2011,Meyer1989,Zhao1996} given by
{
\begin{equation}
\label{eq:exre}
\varepsilon_r(x,y)=
\begin{cases}
\sum_{z\not\in\s_r} \dfrac{q(x,z)}{q_{o}(x)} \sum_{n=0}^\infty\left(\sum_{z'\not\in\s_r} p^n_{|\s_r^c}(z,z')p(z',y)\right)& \text{if }x\in\cal{B}_o(\s_r) \\
0&\text{if }x\not\in\cal{B}_o(\s_r)
\end{cases}
\end{equation}
for all $x,y$ in $\cal{S}_r$, where $q_o$ and $\cal{B}_o(\s_r)$ denote the out-rate and out-boundary \eqref{eq:qo}},  $(p(x,y))_{x,y\in\s}$ denotes the one-step matrix \eqref{eq:jumpmat} of the embedded discrete-time chain, and $(p^n_{|\s_r^c}(x,y))_{x,y\not\in\s_r}$ denotes the $n^{th}$ matrix power of the restriction  $(p_{|\s_r^c}(x,y))_{x,y\in\s_r^c}$ of $(p(x,y))_{x,y\in\s}$ to the truncation's complement $\s_r^c$:
$$p_{|\s_r^c}(x,y):=\left\{\begin{array}{cl}\frac{q(x,y)}{q(x)}&\text{if }q(x)>0\\0&\text{if }q(x)=0\end{array}\right.\qquad\forall x,y\not\in\s_r.$$
While a rigorous proof of \eqref{eq:exre} requires the use of the strong Markov property and lies beyond the scope of this review, the argument goes as follows:
\begin{enumerate}
\item $p^n_{|\s_r^c}(z,z')p(z',y)$ is the probability that $X$ first returns to the truncation after $n$ jumps and by jumping from $z'$ to $y$, conditioned on the event that $z$ was the first state outside of the truncation it visited. 
\item For this reason, $\sum_{z'\not\in\s_r}p^n_{|\s_r^c}(z,z')p(z',y)$ is the probability that $X$ first returns to the truncation after $n$ jumps and by jumping to $y$, conditioned on the event that $z$ was the first state outside of the truncation it visited. 
\item Thus, $\sum_{n=0}^\infty \sum_{z'\not\in\s_r}p^n_{|\s_r^c}(z,z')p(z',y)$ is the probability that $X$ returns to the truncation by jumping to $y$, conditioned on the event that $z$ was the first state outside of the truncation it visited.
\item $\frac{q(x,z)}{q_{o}(x)}$ is the probability that $z$ is the first state that $X$ visits after leaving the truncation, conditioned on the event that the last state it visited before leaving was $x$.
\item Thus, the right-hand side of \eqref{eq:exre} is the probability that $X$ returns to the truncation the truncation by jumping to $y$, conditioned on the event that the last state it visited before leaving was $x$. 
\end{enumerate}

By truncating the sums in \eqref{eq:exre}, we obtain lower bounds $l^{\varepsilon_r}$ on $\varepsilon_r$. Augmenting these bounds so that the rows sum indexed by states $x$ in $\cal{B}_o(\s_r)$ to one, we obtain re-entry matrices that converge to $\varepsilon_r$ as fewer and fewer terms are truncated from \eqref{eq:exre}. Of course, computing these re-entry matrices in practice comes at a further computational expense.

\subsection{The convergence factor as an indicator of error}\label{app:cfprob}

Ostensibly, the convergence factor $F_r$ in \eqref{eq:convergencefactor} incorporates global information regarding the chain's behaviour as its definition features a Lyapunov function $v$ satisfying the inequality \eqref{eq:lyacom} that holds over the entire state. However, $F_r$'s definition only involves the values that $v$ takes within the truncation and does not  utilise any of the constants appearing in the inequality (e.g.\ compare with the error bound in \eqref{eq:l1boundsfsp}). 
As we show below, by tweaking the constants featuring in the inequality \eqref{eq:lyacom}, we may set $v$ to be any desired constant $k\geq1$ on $\s_r$ without invalidating the premise of the necessary criterion (Theorem~\ref{lyacom}), for any chain with rate matrix of the sort \eqref{eq:qmatrixsrn}. Thus, {the convergence factor is not a useful error bound for a fixed truncation $\s_r$ because 
$$v(z)+\max_{x\in\cal{B}_o(\s_r)}v(x)$$
in \eqref{eq:convergencefactor} may be set to any number no smaller than two.}

Suppose that $v,d_1,d_2$ satisfy the premise of Theorem~\ref{lyacom}.  Let $\s_r$ be any finite truncation of the state space $\s$, $k\geq1$, and 
$$\tilde{v}(x):=\left\{\begin{array}{ll}k&\text{if }x\in\s_r\\v(x)&\text{if }x\not\in\s_r\end{array}\right.\quad\forall x\in\s.$$
By definition, $\tilde{v}$ is norm-like and takes values no smaller than one.
Moreover, for states $x$ inside of the truncation $\s_r$, we have that
\begin{align*}Q\tilde{v}(x)&\leq\left(\max_{x\in\s_r}Q\tilde{v}(x)\right)+k-\tilde{v}(x).\end{align*}
On the other hand, for states $x$ outside of the truncation, we have that
\begin{align*}Q\tilde{v}(x)&=\sum_{x'\in\s_r}q(x,x')\tilde{v}(x')+\sum_{x'\not\in\s_r}q(x,x')\tilde{v}(x')=k\sum_{x'\in\s_r}q(x,x')+\sum_{x'\not\in\s_r}q(x,x')v(x')\\
&\leq k\sum_{x'\in\s_r}q(x,x')+Qv(x) \leq k\sum_{x'\in\s_r}q(x,x')+d_2-d_1v(x)\\
&=k\sum_{x'\in\s_r}q(x,x')+d_2-d_1\tilde{v}(x).\end{align*}
If the rate matrix is of the type in \eqref{eq:qmatrixsrn}, then the set
$$\left\{x\not\in\s_r:\sum_{x'\in\s_r}q(x,x')>0\right\}$$
is finite and setting 
$$\tilde{d}_1:=\min\{d_1,1\},\quad\tilde{d}_2:=\max\left\{\left(\max_{x\in\s_r}Q\tilde{v}(x)\right)+k,d_2+k\left(\max_{x\not\in\s_r}\sum_{x'\in\s_r}q(x,x')\right)\right\},$$
we obtain $(\tilde{v},\tilde{d}_1,\tilde{d_2})$ satisfying the premise of Theorem~\ref{lyacom} with $\tilde{v}(x)=k, \, \forall x \in \s_r$.

{\subsection{Tightening the bound~\eqref{eq:l1boundsfsp} using linear programming}\label{app:lptighten}
Once $v$ and $F$ satisfying~\eqref{eq:lyabia} for some $f\geq1$ and $d$ have been found and $\beta$ has been chosen for some $r$ satisfying $F\subseteq\s_r$ and $\overline{\phi}^\beta_r>0$, the bound~\eqref{eq:l1boundsfsp} can be tightened by solving the linear program:
\begin{align}
\label{eq:fne78wa9hf7yae}
c_r:=\inf\left\{1_{F_\circ}(z_r)w(z_r)+\frac{e}{\beta \overline{\phi}^\beta_r}: \hspace{-5pt}
\begin{array}
{l}(w(x))_{x\in F_\circ}\in\r^{\mmag{F_\circ}}, \, e\in\r,\, w(x)\geq0, 
\, \forall x\in F_\circ, \\
\sum_{y\in \s}q(x,y)[1_{F_\circ}(y)w(y)+1_{F_\circ^c}(y)v(y)]\leq e-1, \, x\in F
\end{array}
\hspace{-5pt}
\right\},
\end{align}
where $F_\circ:=\{x \in F: q(x,y)=0\enskip \forall y\not\in F\}$ denotes the set of states in $F$ from which the chain cannot leave $F$ in a single jump and $F_\circ^c:=\s\backslash F_\circ$ denotes its complement. Because  $((v(x))_{x\in F_\circ},d)$ satisfies the constraints of the linear program, its optimal value  $c_r$ is at most $1_{F_\circ}(z_r)v(z_r)+d/(\beta\overline{\phi}_r^\beta)$ and we have that
$$1_{F_\circ}(z_r)c_r+1_{F_\circ^c}(z_r)[v(z_r)+c_r]\leq v(z_r)+\frac{d}{\beta \overline{\phi}^\beta_r}.$$
Note that $F_\circ$ is the set of states in $F$ that do not feature in the stationary equations indexed by states outside $F$. Hence 
$$\tilde{v}(x):=\left\{\begin{array}{ll}w(x)&\text{if }x\in F_\circ\\ v(x)&\text{if }x\not\in F_\circ\end{array}\right.\enskip\forall x\in\s,\enskip \tilde{f}:=e-Q\tilde{v},\quad \tilde{d}:=e, \quad \tilde{F}:=F,$$
satisfy the criterion in Theorem~\ref{lyabia}  if $(w,e)$ is any feasible point of the linear program~\eqref{eq:fne78wa9hf7yae}. For these reasons,  replacing $v,f,d,F$ in~\eqref{eq:l1boundsfsp} with $\tilde{v},\tilde{f},\tilde{d},\tilde{F}$ for any optimal point $(w,e)$ achieving $c_r$, we obtain the refined bound:
\begin{equation}\label{eq:refbounds}\norm{\pi-\pi_r^{z_r}}_{TV}\leq (1_{F_\circ}(z_r)c_r+1_{F_\circ^c}(z_r)[v(z_r)+c_r])O_r\leq \left(v(z_r)+\frac{d}{\beta \overline{\phi}^\beta_r}\right)O_r.\end{equation}
Of course, obtaining this bound comes at a further computational expense, as its calculation requires solving a linear program with $\mmag{F_\circ}+1$ variables and a comparable number of constraints. However, this extra cost should be at worst comparable to that incurred by the matrix inversion in~\eqref{eq:phir}.
}

\subsection{A proof of \eqref{eq:condconv}}\label{app:condconvproof}

Note that
$$B:=\frac{1}{\max_{x\in\s_r}q(x)}Q^{\varepsilon_r}_r+I\geq \frac{1}{\max_{x\in\s_r}q(x)}Q_r+I\geq0,$$
where $Q_r^{\varepsilon_r}$ denotes the stochastic complement of $Q$ (c.f.\ \eqref{eq:truncandaug}), $Q_r$ the truncated rate matrix $(q(x,y))_{x,y\in\s_r}$,  and $I$ the identity matrix $(1_x(y))_{x,y\in\s_r}$. Irreducibility of $Q$ implies irreducibility of $B$. Moreover, it is straightforward to verify that $B$ has spectral radius of one and that its Perron-Frobenius right-eigenvector is the vector of ones. Because we may rewrite \eqref{eq:stateqmod1} (with $e_r:=\varepsilon_r$) as $\pi_r^{\varepsilon_r}B=\pi_r^{\varepsilon_r}$ and because the unique probability distribution solving these equations is the conditional distribution (c.f.\ Section~\ref{sec:opt}), \cite[Theorem~3]{Courtois1984} shows that the conditional distribution belongs to the convex hull of the normalised rows of $Q_r^{-1}$ (irreducibility of the state space ensures that $Q_r$ is invertible) indexed by states belonging to the in-boundary \eqref{eq:insidebound}.

Replacing $e_r$ with $(1_z(y))_{x,y\in\s_r}$ in \eqref{eq:truncandaug2}, we find that $\pi_r^z$ satisfies \eqref{eq:stateqmod1} if and only if
\begin{equation}\label{eq:truncatedinverse}\sum_{x\in\s_r}\pi_r^z(x)q(x,y)=-\left(\sum_{x\in\s_r}\pi_r^z(x)q_o(x)\right)1_z(y)\quad\forall y\in\s_r.\end{equation}
Because the state space is irreducible, $\sum_{x\in\s_r}\pi_r^z(x)q_o(x)>0$  and we may rewrite the above as $(\alpha \pi_r^z) Q_r=1_z$ for some constant $\alpha\neq0$. In other words, the normalised rows of $Q_r^{-1}$ are the TA approximations $(\pi^z_r)_{z\in\s_r}$ and \eqref{eq:condconv} follows.

\subsection{A proof of \eqref{eq:boundsitaf}}\label{app:boundsitafproof}

Equation \eqref{eq:condconv} implies that
\begin{equation}\label{eq:fneau8n387naw}\pi(\s_r)\left(\min_{z\in\cal{B}_i(\s_r)}\pi^z_r(f)\right)\leq \pi_{|r}(f)\leq \pi(\s_r)\left(\max_{z\in\cal{B}_i(\s_r)}\pi^z_r(f)\right).\end{equation}
If 
$$\min_{z\in\cal{B}_i(\s_r)}\pi^z_r(f)\geq0,$$
then
\begin{align*}\pi(\s_r)\left(\min_{z\in\cal{B}_i(\s_r)}\pi^z_r(f)\right)&\geq \left(1-\frac{c}{r}\right)\left(\min_{z\in\cal{B}_i(\s_r)}\pi^z_r(f)\right),\\
\min_{z\in\cal{B}_i(\s_r)}\pi^z_r(f)&\geq \left(1-\frac{c}{r}\right)\left(\min_{z\in\cal{B}_i(\s_r)}\pi^z_r(f)\right),\end{align*}
for all $r\geq c$. Otherwise,
$$\pi(\s_r)\left(\min_{z\in\cal{B}_i(\s_r)}\pi^z_r(f)\right)\geq \min_{z\in\cal{B}_i(\s_r)}\pi^z_r(f),\enskip \left(1-\frac{c}{r}\right)\left(\min_{z\in\cal{B}_i(\s_r)}\pi^z_r(f)\right)\geq \min_{z\in\cal{B}_i(\s_r)}\pi^z_r(f),$$
for all $r\geq c$. In either case, the leftmost inequality in \eqref{eq:boundsitaf} follows from \eqref{eq:fneau8n387naw}. The argument for the rightmost inequality in \eqref{eq:boundsitaf} is entirely analogous and we skip it.

\subsection{Strengthening pointwise convergence of $l_r$ into total variation convergence}\label{app:itaconvtrick}
{Suppose that
$$l_r(x)\leq \pi(x),\quad\lim_{r\to\infty}l_r(x)=\pi(x),\quad\forall x\in\s.$$
Because the total variation norm of an unsigned measure is its mass,
\begin{align*}\norm{l_r-\pi}_{TV}&=\sum_{x\in\s}(\pi(x)-l_r(x))\leq \sum_{x\in\s_{r'}}(\pi(x)-l_r(x))+\sum_{x\not\in\s_{r'}}\pi(x)\\
&=\sum_{x\in\s_{r'}}(\pi(x)-l_r(x))+m_{r'}\quad\forall r,r'\in\zp,\end{align*}
where $(\s_r)_{r\in\zp}$ denotes any sequence of increasing finite truncations that approach the state space (i.e.\ $\cup_{r=1}^\infty\s_r=1$)). Fix any $\varepsilon>0$ and pick an $r'$ large enough that $m_{r'}\leq \varepsilon/2$. Because $\mathcal{S}_{r'}$ is finite, the  pointwise convergence implies that the leftmost sum is smaller than $\varepsilon/2$ for all large enough $r$. As $\varepsilon>0$ was arbitrary, it follows that $\norm{l_r-\pi}_{TV}\to0$ as $r\to\infty$.}

\subsection{The conditional distribution belongs to $\cal{P}_r$ in \eqref{eq:br}}\label{app:condouter}

By the definition in \eqref{eq:nr} of $\cal{N}_r$, any equation $\pi Q(x)=0$ with $x$ in $\cal{N}_r$ only involves entries of $\pi$ indexed by $x'$s inside the truncation. For this reason,
\begin{align*}\sum_{x'\in\s}\pi(x'|\s_r)q(x',x)&=\sum_{x'\in\s_r}\pi(x'|\s_r)q(x',x)=\sum_{x'\in\s_r}\frac{\pi(x')q(x',x)}{\pi(\s_r)}\\
&=\sum_{x'\in\s}\frac{\pi(x')q(x',x)}{\pi(\s_r)}=0\quad\forall x\in\cal{N}_r,\end{align*}
and we have that the conditional distribution satisfies the first constraint in \eqref{eq:br}. That it satisfies the second, third, and fifth constraints in \eqref{eq:br} follows directly from its definition in \eqref{eq:picond}. To show that it satisfies the fourth constraint, note that
$$
\sum_{x\not\in\s_r}\frac{\pi(x)}{\pi(\s_r^c)}w(x)\geq \sum_{x\not\in\s_r}\frac{\pi(x)}{\pi(\s_r^c)}r=r=\sum_{x\in\s_r}\frac{\pi(x)}{\pi(\s_r)}r>\sum_{x\in\s_r}\frac{\pi(x)}{\pi(\s_r)}w(x).$$
Thus,
\begin{align*}
\pi(w|\s_r)&=\sum_{x\in\s_r}\frac{\pi(x)}{\pi(\s_r)}w(x)=\pi(\s_r)\sum_{x\in\s_r}\frac{\pi(x)}{\pi(\s_r)}w(x)+\pi(\s_r^c)\sum_{x\in\s_r}\frac{\pi(x)}{\pi(\s_r)}w(x)\\
&<\sum_{x\in\s_r}\pi(x)w(x)+\sum_{x\not\in\s_r}\pi(x)w(x)\leq c,
\end{align*}
completing the proof.

\end{document}